\documentclass[11pt]{article}
\usepackage{graphicx}
%\usepackage{float}
%\restylefloat{table}
\usepackage{placeins}
\usepackage{amsmath,amsthm,latexsym,amssymb,amsfonts,latexsym}
\usepackage{bm}
\usepackage[noadjust]{cite}
\usepackage{makecell}

\newtheorem{theorem}{Theorem}[section]

\newtheorem{lemma}{Lemma}[section]
\newtheorem{corollary}{Corollary}[section]

\numberwithin{equation}{section}
\newcommand\numberthis{\addtocounter{equation}{1}\tag{\theequation}}
\newcommand*{\Resize}[2]{\resizebox{#1}{!}{$#2$}}
\newcommand{\spec}{\operatorname{Spec}}

\newcommand{\de}{\operatorname{det}}
\newcommand{\sgn}{\operatorname{sgn}}
\newcommand{\mi}{\operatorname{min}}
\newcommand{\ma}{\operatorname{max}}
\newcommand{\trace}{\operatorname{trace}}
\def \T {{\mathbb T}}

\def \x {{\mathbf x}}
\def \y {{\mathbf y}}

\def \1 {{\mathbf 1}}
\def \i {{\mathbf i}}
\def \omeg {{\bm{\omega}}}

\usepackage{caption, subcaption}
\usepackage[width=150mm,top=25mm,bottom=25mm]{geometry}
\usepackage{mathtools}
\usepackage{enumerate}

\linespread{1.5}
\usepackage{cite}
\date{}
%%%%%%%%%%%%%%%%%%%%%%%%%%%%%%%%%%%%%%%%%%%%%
\begin{document}
\title{\textbf{On spectra of Hermitian Randi\'c matrix of second kind }}
	\date{}
	\author{A. Bharali$^1$\footnote{Corresponding author: a.bharali@dibru.ac.in}, Bikash Bhattacharjya$^2$, Sumanta Borah$^3$, Idweep Jyoti Gogoi$^4$\\
	Department of Mathematics, Dibrugarh University, Assam, India$^{1,\;3,\;4}$\\
	Department of Mathematics, Indian Institute of Technology Guwahati, India$^2$}
	\maketitle

\begin{abstract}
Let $X$ be a mixed graph and $\omega=\frac{1+\i \sqrt{3}}{2}$. We write $i\rightarrow j$, if there is an oriented edge from a vertex $v_i$ to another vertex $v_j$, and $i\sim j$ for an un-oriented edge between the vertices $v_i$ and $v_j$. The degree of a vertex $v_i$ is denoted by $d_i$. We propose the Hermitian Randi\'c matrix of second kind $R^\omeg(X)\coloneqq(R^\omeg_{ij})$, where $R^\omeg_{ij}=\frac{1}{\sqrt{d_id_j}}$ if $i \sim j$, $R^\omeg_{ij}= \frac{\omega}{\sqrt{d_id_j}}$ and  $R^\omeg_{ji}= \frac{\overline{\omega}}{\sqrt{d_id_j}}$ if  $i\rightarrow j$, and 0 otherwise. In this paper, we investigate some spectral features of this novel Hermitian matrix and study a few properties like positiveness, bipartiteness, edge-interlacing etc. We also compute the characteristic polynomial for this new matrix and obtain some upper and lower bounds for the eigenvalues and the energy of this matrix. \\
\itshape \bf {Keywords:} \normalfont Mixed graph; Hermitian adjacency matrix; Hermitian Randi\'c matrix; graph energy\normalfont\\
\itshape \bf {AMS Classifications (2010):} \normalfont 05C50; 05C09; 05C31
\end{abstract}
%%%%%%%%%%%%%%%%%%%%%%%%%%%%%%%%%%%%%%%%%
\section{Introduction}
There has been an upsurge of studies related to spectral properties of graph theoretical matrices. Investigation of these properties play a vital role in analysing some properties of networks. In recent times, the extensions of spectral theory of un-oriented networks to mixed networks is a popular topic. In comparison to the un-oriented networks, the mixed networks are much better to model the real world problems. However, we see that many graph matrices for mixed networks appear to be non-symmetric, losing the property that eigenvalues are real.

Recently, many researcher studied the spectral properties of adjacency matrix, Laplacian matrix, normalized Laplacian matrix etc. of mixed networks by incorporating modified versions of these matrices. For details, see \cite{ars,b1,gm,ylq,ywgq}. In 2015, Yu and Qu \cite{yq} described some notable works on Hermitian Laplacian matrix of mixed graphs. In the same year, Liu and Li \cite{ll} studied some properties of Hermitian adjacency matrix. They also determined some bounds for energies of mixed graphs. Some similar works on Randi\'{c} matrix was done by Lu et al. \cite{lwz} in 2017. Yu et al. \cite{ydsj} in 2019, defined the Hermitian normalized Laplacian matrix and studied some spectral properties for mixed networks. In 2020, B. Mohar \cite{m1} introduced a new modified Hermitian matrix that seems more natural. Some relevant notable works can be found in \cite{gs,ls,lw,ly,m2,r,sk}.

The energy levels of $\pi$-electrons in conjugated hydrocarbons in molecular orbital are strongly related in spectral graph theory. In 1978, Gutman \cite{g0} developed the notion of graph energy based on eigenvalues of a graph. Since then it plays an important role in chemical graph theory. Later many variants of graph energy, based on different matrices other than the adjacency matrix, were proposed as a consequence of the success of the notion of graph energy, for details see \cite{bggc, bmgd, c, fa, kkp, lw}. In 2010, Bozkurt et al. \cite{bggc} proposed the Randi\'c energy of graph as the sum of the absolute values of the eigenvalues of the Randi\'c matrix. In 2017, Lu et al. \cite{lwz} introduced Hermitian Randi\'c matrix for mixed graphs, which was based on the Hermitian adjacency matrix proposed by Guo and Mohar \cite{gm}. They also investigated the energy for this matrix. In this paper, we define the Hermitian Randi\'c matrix of second kind of a mixed graph,  and study some properties of its eigenvalues and energy.

%%%%%%%%%%%%%%%%%%%%%%%%%%%%%%%%%%%%%%%%%%%%%%%%%%%%%%%%%%%%%%%%%%%%%%
\section{Basic Definitions}
Throughout the paper, we consider connected simple graph with at least two vertices. Let $X$ be an un-oriented graph. We denote an edge of $X$ between the vertices $v_i$ and $v_j$  by $e_{ij}$. Note that the edge $e_{ij}$ can be assigned two orientations. An oriented edge originating at $v_i$ an terminating at $v_j$ is denoted by $\overrightarrow{e_{ij}}$. For each edge $e_{ij}\in E(X)$, there is a pair of oriented edges $\overrightarrow{e_{ij}}$ and $\overrightarrow{e_{ji}}$. The collection $\overrightarrow{E}(X)\coloneqq \{\overrightarrow{e_{ij}}, \overrightarrow{e_{ji}}: e_{ij} \in E(X) \}$ is the oriented edge set associated with $X$. Note that each edge of an un-oriented graph is of the form $e_{ij}$. The set $\overrightarrow{E}(X)$ is the collection of all possible oriented edges of an un-oriented graph $X$.

A graph $X$ is said to be mixed if it has both possibilities of edges that are oriented and un-oriented. If $X$ is a mixed graph, then at most one of $e_{ij}, \overrightarrow{e_{ij}}$ and $\overrightarrow{e_{ji}}$ can be in $E(X)$. We write $i\rightarrow j$, if there is an oriented edge from vertex $v_i$ to vertex $v_j$, and $i\sim j$ for an un-oriented edge between the vertices $v_i$ and $v_j$. The graph  $X_U$ obtained from a mixed graph $X$ by replacing each of the oriented edge of $X$ by the corresponding un-oriented edge is called the {\itshape underlying graph} of $X$. A \emph{cycle} in a mixed graph is a cycle in its underlying graph. A cycle is even or odd according as its order is even or odd.

A \emph{gain graph} or $\T$-gain graph is a triplet $\Phi\coloneqq (X,\T, \varphi)$, where $X$ is an un-oriented graph, $\T=\{z\in\mathbb{C}: |z|=1\}$, and $\varphi: \overrightarrow{E}(X)\rightarrow \T$ is a function satisfying $\varphi(\overrightarrow{e_{ij}})=\varphi(\overrightarrow{e_{ji}})^{-1}$ for each $e_{ij} \in E(X)$. The function $\varphi$ is called the {\itshape gain function} of $(X,\T,\varphi)$. For simplicity, we use $\Phi\coloneqq (X, \varphi)$ to denote a $\T$-gain graph instead of $\Phi\coloneqq (X, \T, \varphi)$. For a $\T$-gain graph $\Phi\coloneqq (X,\T, \varphi)$,  the $\T$-gain graph $-\Phi$ is defined by $-\Phi\coloneqq (X, -\varphi)$. In \cite{r}, Reff proposed the notion of the adjacency matrix $A(\Phi)\coloneqq(a_{ij})$ of a $\T$-gain graph, where
 $$
a_{ij}=\left\{
  \begin{array}{ll}
    \varphi(\overrightarrow{e_{ij}}) & \hbox{if $i \rightarrow j$} \\
    0 & \hbox{otherwise.}
  \end{array}
\right.
$$
It is clear that $ A(\Phi)$ is Hermitian. Thus the eigenvalues of this matrix are real. If $\varphi(\overrightarrow{e_{ij}})=1$ for all $\overrightarrow{e_{ij}}$, then we have $ A(\Phi)= A(X)$, where $A(X)$ is the usual $(0,1)$-adjacency matrix of the graph $X$. Thus one can assume a graph $X$ as a $\T$-gain graph $(X,\mathbf 1)$, where $\mathbf 1$ is the function that assign 1 to each edge of $\overrightarrow{E}(X)$. 
%By slight abuse of notation, we sometimes write $\varphi(\overrightarrow{e_{ij}})$ as $\varphi(e_{ij})$. 
A \emph {switching function} $\zeta$ of $X$ is a function from $V(X)$ to $\T$, that is, $\zeta:V(X)\rightarrow \T$. Two gain graphs $\Phi_1\coloneqq (X,\varphi_1)$ and $\Phi_2\coloneqq (X,\varphi_2)$ are said to be \emph{switching equivalent}, written $\Phi_1\sim \Phi_2$, if there exists a switching function  $\zeta:V(X)\rightarrow \T$ such that
$\varphi_2(\overrightarrow{e_{ij}})=\zeta(i)^{-1}\varphi_1(\overrightarrow{e_{ij}})\zeta(j).$

It is clear from the definition that the gain graphs $\Phi_1$ and $\Phi_2$ are switching equivalent if and only if there is a diagonal matrix $D_\zeta$, where the diagonal entries come from $\T$, such that
$$ A(\Phi_2)=D_\zeta^{-1} A(\Phi_1)D_\zeta. $$

In 2015, Guo and Mohar \cite{gm} introduced a Hermitian adjacency matrix $H(X)$ of a mixed graph $X$, where the $ij$-th entry is $\i, -\i$ or 1 according as $\overrightarrow{e_{ij}}\in E(X)$, $\overrightarrow{e_{ji}}\in E(X)$ or $e_{ij}\in E(X)$ respectively, and 0 otherwise. Here $\i=\sqrt{-1}$. This matrix has numerous appealing characteristics, including real eigenvalues and the interlacing theorem for mixed graphs etc.

Later in 2020, Mohar \cite{m1} put forward a new Hermitian adjacency matrix $H^{\omega}(X)\coloneqq(h_{ij})$ of a mixed graph $X$, which is referred as Hermitian matrix of second kind, where
$$
h_{ij}=\left\{
  \begin{array}{ll}
    1 & \hbox{if $i \sim j$} \\
    \omega & \hbox{if $i \rightarrow j$} \\
    \overline{\omega} & \hbox{if $j\rightarrow i$} \\
    0 & \hbox{otherwise.}
  \end{array}
\right.
$$
Here $\omega\coloneqq\frac{1+\mathbf{i}\sqrt{3}}{2}$, a primitive sixth root of unity and $\overline{\omega}\coloneqq\frac{1-\mathbf{i}\sqrt{3}}{2}$. For a mixed graph $X$, let $(X_U,\omeg)$ represent the $\T$-gain graph with the gain function $\omeg:\overrightarrow{E}(X_U)\rightarrow \{1, \omega, \overline{\omega}\}$, where
$$
\omeg(\overrightarrow{e_{ij}})=\left\{
  \begin{array}{ll}
    1 & \hbox{if $e_{ij} \in E(X)$} \\
    \omega & \hbox{if $\overrightarrow{e_{ij}}\in E(X)$}\\
    \overline{\omega} & \hbox{if $\overrightarrow{e_{ji}}\in E(X)$}.
  \end{array}
\right.
$$
Note that $H^{\omega}(X)=A(\Phi)$ for $\Phi=(X_U, \omeg)$.

With the growing popularity of these Hermitian matrices, the idea of investigating spectral properties of mixed networks based on other graph matrices is also evolved. The matrix $D^{-1/2}A(X)D^{-1/2}$ is called the normalized adjacency matrix of an un-oriented graph $X$, where $D=\text{diag}(d_1,\ldots,d_n)$ and $d_i$ denote the degree of the vertex $v_i$ for $i\in\{1,\ldots,n\}$. This matrix is popularly known as the Randi\'c matrix $R(X)$.  

The matrix $D^{-1/2}H(X)D^{-1/2}$ is called the Hermitian Randi\'c matrix, denoted $R_H(X)$, of a mixed graph $X$. Similarly, the matrix  $D^{-1/2}H^{\omega}(X)D^{-1/2}$ is called the Hermitian Randi\'c matrix of second kind, denoted $R^{\omega}(X)$, of a mixed graph $X$. 

If $R^\omeg(X)\coloneqq (R_{ij}^\omeg)$, we find that
$$
R_{ij}^\omeg=\left\{
  \begin{array}{ll}
    \frac{1}{\sqrt{d_id_j}} & \hbox{if $i \sim j$} \\
    \frac{\omeg}{\sqrt{d_id_j}} & \hbox{if $i \rightarrow j$} \\
    \frac{\overline{\omeg}}{\sqrt{d_id_j}} & \hbox{if $j \rightarrow i$} \\
    \;\;\;0 & \hbox{otherwise.}
 \end{array}
\right.
$$
Clearly, $R^\omeg(X)$ is Hermitian. Let $L^\omeg(X)=D-H^\omeg(X)$ and $\mathfrak{L}^\omeg(X)=D^{-1/2}L^\omeg(X) D^{-1/2}$. The matrices $L^\omeg(X)$ and $\mathfrak{L}^\omeg(X)$ are known as the Hermitian Laplacian matrix of second kind and the normalized Hermitian Laplacian matrix of second kind of $X$, respectively. It is clear that $R^\omeg(X)=I-\mathfrak{L}^\omeg(X)$, where $I$ is the identity matrix of appropriate order.

The Randi\'c matrix $R(\Phi)$ of a $\T$-gain graph $\Phi$ is defined by $R(\Phi)\coloneqq D^{-1/2}A(\Phi)D^{-1/2}$. Similarly, the matrix $I-R(\Phi)$ is called the normalized Laplacian matrix of a $\T$-gain graph $\Phi$. If $X$ is a mixed graph, then we see that $R^\omeg(X)=R(\Phi)$, where $\Phi=(X_U, \omeg)$.

A walk (or path) in a mixed graph is a walk (or path) in its underlying graph. The value of a walk $W\coloneqq v_{i_1}v_{i_2}\cdots v_{i_\ell}$ in a mixed graph $X$ is defined as $R^\omeg_{i_{1}i_2}R^\omeg_{i_{2}i_3}\cdots R^\omeg_{i_{\ell-1}i_\ell}$. A walk is called positive or negative according as its value is positive or negative, respectively. An acyclic mixed graph is defined to be positive. A non-acyclic mixed graph is positive or negative according as each of its mixed cycle is positive or negative, respectively. A mixed graph $X$ is called an \emph{elementary graph} if each component of $X$ is an edge or a cycle. 

For the easy reference of the reader, we list the various notations and names of matrices associated with an un-oriented graph, mixed graph or a gain graph in Table~\ref{table1}, Table~\ref{table2}, Table~\ref{table3} and Table~\ref{table4}. Here $D$ denotes the degree matrix of an un-oriented graph, or the degree matrix of the corresponding underlying graph in case of a mixed graph or gain graph. Further, $I$ denotes the identity matrix of appropriate order.

%%%%%%%%%%%%%%%%%%%%%%%%%%%%%%%
%%%%%%%%%%%%%%%%%%%%%%%%%%%%%%%
%\FloatBarrier
\begin{table}
\begin{center}
\caption{$X$ is an un-oriented graph}
\Resize{15cm}{\begin{tabular}{| c | c | c  | c | }
	\hline
	 \makecell{adjacency \\ matrix} & \makecell{Randi\'c matrix \\ $R(X)$}  & \makecell{Laplacian matrix \\ $L(X)$} & \makecell{normalized Laplacian \\ matrix $\mathfrak{L}(X)$ }  \\
	\hline
	$A(X)$ & $R(X)=D^{-1/2}A(X)D^{-1/2}$  & $L(X)=D-A(X)$ & 
	\makecell{$\mathfrak{L}(X)=D^{-1/2}L(X)D^{-1/2}$ \\ $=I-R(X)~~$ } \\
	\hline
\end{tabular}}
\label{table1}
\end{center}
\end{table}
%\FloatBarrier

%%%%%%%%%%%%%%%%%%%%%%%%%%%%%%%
%%%%%%%%%%%%%%%%%%%%%%%%%%%%%%%

%\FloatBarrier
\begin{table}
\begin{center}
\caption{$X$ is a mixed graph}
\Resize{15cm}{\begin{tabular}{| c | c | c  | c | }
	\hline
	 \makecell{Hermitian \\ adjacency \\ matrix} & \makecell{Hermitian Randi\'c \\ matrix  $R_H(X)$}  & \makecell{Hermitian Laplacian \\matrix  $L_H(X)$} & \makecell{normalized Hermitian \\ Laplacian matrix $\mathfrak{L}_H(X)$ }  \\
	\hline
	$H(X)$ & $R_H(X)=D^{-1/2}H(X)D^{-1/2}$  & $L_H(X)=D-H(X)$ & 
	\makecell{$\mathfrak{L}_H(X)=D^{-1/2}L_H(X)D^{-1/2}$ \\ $=I-R_H(X)~$ } \\
	\hline
\end{tabular}}
\label{table2}
\end{center}
\end{table}
%\FloatBarrier
%%%%%%%%%%%%%%%%%%%%%%%%%%%%%%%
%%%%%%%%%%%%%%%%%%%%%%%%%%%%%%%

%\FloatBarrier
\begin{table}
\begin{center}
\caption{$X$ is a mixed graph}
\Resize{15cm}{\begin{tabular}{| c | c | c  | c | }
	\hline
	 \makecell{Hermitian \\ adjacency \\ matrix of \\2nd kind} & \makecell{Hermitian Randi\'c \\ matrix  of 2nd kind\\  $R^\omeg(X)$}  & \makecell{Hermitian Laplacian \\matrix of 2nd kind\\  $L^\omeg(X)$} & \makecell{normalized Hermitian \\ Laplacian matrix \\ of 2nd kind $\mathfrak{L}^\omeg(X)$ }  \\
	\hline
	$H^\omeg(X)$ & $R^\omeg(X)=D^{-1/2}H^\omeg(X)D^{-1/2}$  & $L^\omeg(X)=D-H^\omeg(X)$ & 
	\makecell{$\mathfrak{L}^\omeg(X)=D^{-1/2}L^\omeg(X)D^{-1/2}$ \\ $=I-R^\omeg(X)~$ } \\
	\hline
\end{tabular}}
\label{table3}
\end{center}
\end{table}
%\FloatBarrier
%%%%%%%%%%%%%%%%%%%%%%%%%%%%%%%
%%%%%%%%%%%%%%%%%%%%%%%%%%%%%%%
%\FloatBarrier
\begin{table}
\begin{center}
\caption{$\Phi$ is a gain graph}
\Resize{15cm}{\begin{tabular}{| c | c | c  | c | }
	\hline
	 \makecell{adjacency \\ matrix} & \makecell{Randi\'c matrix \\ $R(\Phi)$}  & \makecell{Laplacian matrix \\ $L(\Phi)$} & \makecell{normalized Laplacian \\ matrix $\mathfrak{L}(\Phi)$ }  \\
	\hline
	$A(\Phi)$ & $R(\Phi)=D^{-1/2}A(\Phi)D^{-1/2}$  & $L(\Phi)=D-A(\Phi)$ & 
	\makecell{$\mathfrak{L}(\Phi)=D^{-1/2}L(\Phi)D^{-1/2}$ \\ $=I-R(\Phi)~~~$ } \\
	\hline
\end{tabular}}
\label{table4}
\end{center}
\end{table}
%\FloatBarrier
%%%%%%%%%%%%%%%%%%%%%%%%%%%%%%%
%%%%%%%%%%%%%%%%%%%%%%%%%%%%%%%

%%%%%%%%%%%%%%%%%%%%%%%%%%%%%%%%%%%%%%%%%%%%%%%%%%%%%%%%%%%%%%%%
\section{Spectral properties of Hermitian Randi\'{c} matrix of second kind}
%%%%%%%%%%%%%%%%%%%%%%%%%%%%%%%%%%%%%%%%%%%%%%%%%%%%%%%%%%%%%%%%%%
In this section, we characterize some spectral properties of $R^\omeg(X)$. We continue with some known results which are associated to our findings.

Let $\mathcal{M}_n(\mathbb{C})$ denote the set of all $n\times n$ matrices with complex entries. For $A \in \mathcal{M}_n(\mathbb{C})$, the matrix whose entries are absolute values of the corresponding entries of $A$ is denoted by $|A|$. The maximum of the absolute values of the eigenvalues of a matrix $A$ is called the \emph{spectral radius} of $A$. It is denoted by $\rho(A)$. Further, the spectrum of $A$ is denoted by $\spec (A)$.
%%%%%%%%%%%%%%%%%%%%%%%%%%%%%%%%%%%%%
\begin{theorem}[\cite{z}]
A $\T$-gain graph $(X,\varphi)$ is positive if and only if $(X,\varphi)\sim (X,\mathbf 1)$.
\end{theorem}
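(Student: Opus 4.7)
The plan is to prove the two directions separately, with the nontrivial work concentrated in the ``only if'' direction. For the easy direction, I would assume $(X,\varphi) \sim (X,\mathbf{1})$ via a switching function $\zeta$, so that $\varphi(\overrightarrow{e_{ij}}) = \zeta(i)^{-1}\zeta(j)$ for every edge, and then observe that the gain of any cycle $v_{i_1}v_{i_2}\cdots v_{i_k}v_{i_1}$ telescopes to $1$. Since a unimodular complex number is a positive real only when it equals $1$, every cycle is positive, and hence $(X,\varphi)$ itself is positive.

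For the converse, I would build a switching function $\zeta$ explicitly from a spanning tree. Since the paper works throughout with connected graphs, pick a spanning tree $T$ of $X$ and a root $v_0$, set $\zeta(v_0)=1$, and for every other vertex $v$ define $\zeta(v)$ to be the product of the gains of the oriented tree edges along the unique $v_0$--$v$ path in $T$. This definition forces the switching identity $\varphi(\overrightarrow{e_{ij}}) = \zeta(i)^{-1}\zeta(j)$ on every tree edge automatically. Assembling $\zeta$ into the diagonal matrix $D_\zeta$ then gives $A((X,\mathbf{1})) = D_\zeta^{-1} A((X,\varphi)) D_\zeta$ as soon as the identity also holds on non-tree edges, yielding $(X,\varphi) \sim (X,\mathbf{1})$.

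The main obstacle is verifying the switching identity on non-tree edges. For each non-tree edge $\overrightarrow{e_{ij}}$, the two tree paths from $v_0$ to $v_i$ and from $v_0$ to $v_j$ together with $\overrightarrow{e_{ij}}$ close up into a closed walk whose total gain I would compute in two ways: once from the definition of $\zeta$ (giving $\zeta(i)\varphi(\overrightarrow{e_{ij}})\zeta(j)^{-1}$), and once as a product of gains of the simple cycles into which the closed walk decomposes, each of which equals $1$ by the positivity hypothesis. Equating the two expressions forces $\varphi(\overrightarrow{e_{ij}}) = \zeta(i)^{-1}\zeta(j)$. The only piece of bookkeeping to watch is the orientation of each edge when traversed backwards along a tree path, where $\varphi(\overrightarrow{e})^{-1}$ appears in place of $\varphi(\overrightarrow{e})$; this is handled by the gain-graph convention $\varphi(\overrightarrow{e_{ij}}) = \varphi(\overrightarrow{e_{ji}})^{-1}$ built into the definition.
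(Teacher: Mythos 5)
The paper does not prove this statement: it is quoted as a known result (Zaslavsky's balance theorem, cited as \cite{z}), so there is no in-paper proof to compare against. Your argument is the standard spanning-tree proof of that classical result and is correct: the telescoping computation gives the easy direction, and defining $\zeta$ along tree paths from a root makes the switching identity automatic on tree edges, with positivity of each fundamental cycle (the shared initial segments of the two root paths cancel in the gain, leaving exactly that simple cycle) forcing it on the remaining edges. The only trivial bookkeeping to fix against the paper's convention is whether the identity reads $\varphi(\overrightarrow{e_{ij}})=\zeta(i)^{-1}\zeta(j)$ or $\zeta(i)\zeta(j)^{-1}$, which only amounts to replacing $\zeta$ by $\zeta^{-1}$; and one should note, as you implicitly do, that since gains are unimodular a ``positive'' cycle value is necessarily equal to $1$ (up to the positive factor $Q(C)$ in the paper's definition of value), which is what makes the telescoped product usable.
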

\begin{theorem}[\cite{mks}]
Let $(X,\varphi)$ be a connected and positive $\T$-gain graph. Then $X$ is bipartite if and only if $(X,-\varphi)$ is positive.
\end{theorem}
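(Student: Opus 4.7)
The plan is to reduce both directions of the biconditional to a single observation about how cycle values transform when the gain function is negated. Recall that a $\T$-gain graph is positive precisely when the value of every cycle equals $1$, and that by the preceding theorem this is equivalent to being switching equivalent to $(X,\mathbf{1})$. Since $(X,\varphi)$ is already positive, the task reduces to determining when $(X,-\varphi)$ is also positive, i.e.\ when every cycle of $X$ has value $1$ under the gain $-\varphi$.

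The key computation I would carry out is as follows. For any cycle $C=v_{i_1}v_{i_2}\cdots v_{i_\ell}v_{i_1}$ of length $\ell$ in $X$, the value of $C$ under the gain function $-\varphi$ is
$$\prod_{k=1}^{\ell}\bigl(-\varphi(\overrightarrow{e_{i_k i_{k+1}}})\bigr)=(-1)^{\ell}\prod_{k=1}^{\ell}\varphi(\overrightarrow{e_{i_k i_{k+1}}}),$$
so every cycle's value picks up a factor of $(-1)^{\ell}$ when passing from $\varphi$ to $-\varphi$. As $(X,\varphi)$ is positive, the right-hand product equals $1$, and hence the value of $C$ in $(X,-\varphi)$ is simply $(-1)^{\ell}$. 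Therefore $(X,-\varphi)$ is positive if and only if $(-1)^{\ell}=1$ for every cycle in $X$, i.e.\ every cycle has even length, which is exactly the definition of $X$ being bipartite.

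As an alternative route one could invoke Theorem 3.1 explicitly: fix a switching function $\zeta\colon V(X)\to\T$ realizing $(X,\varphi)\sim(X,\mathbf{1})$, and, using a bipartition $V(X)=A\cup B$ for the forward direction, construct $\eta$ by $\eta|_A=\zeta|_A$ and $\eta|_B=-\zeta|_B$ to obtain $(X,-\varphi)\sim(X,\mathbf{1})$. Conversely, from switching functions realizing both $(X,\varphi)\sim(X,\mathbf{1})$ and $(X,-\varphi)\sim(X,\mathbf{1})$ one extracts the ratio function $\mu\colon V(X)\to\T$ which must satisfy $\mu(j)=-\mu(i)$ along every edge; connectedness then forces $\mu$ to define a proper $2$-colouring, so $X$ is bipartite. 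This route exhibits where connectedness enters, but the cycle-value argument above is the cleaner presentation. I do not expect any serious obstacle; the only point requiring mild care is verifying that $-\varphi$ is itself a legitimate gain function, which follows from $(-z)^{-1}=-z^{-1}$ for $z\in\T$.
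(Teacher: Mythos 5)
The paper states this result (Theorem 3.2) only as a citation to Mehatari--Kannan--Samanta and gives no proof of its own, so there is nothing internal to compare against; your argument stands or falls on its own merits, and it stands. The cycle-value route is correct: positivity of a $\T$-gain graph means every cycle has gain $1$, negating $\varphi$ multiplies the gain of an $\ell$-cycle by $(-1)^{\ell}$, and since $(X,\varphi)$ is positive this reduces the positivity of $(X,-\varphi)$ exactly to the absence of odd cycles, i.e.\ bipartiteness (a characterization rather than the definition, but that is immaterial). Your check that $-\varphi$ is a valid gain function via $(-z)^{-1}=-z^{-1}$ is the right hygiene step. Two small observations: the cycle-value argument does not actually use connectedness (the odd-cycle characterization of bipartiteness holds for arbitrary graphs), whereas your alternative switching-function route genuinely needs it in the converse direction to propagate the sign-alternating function $\mu$ from a base vertex; and in that alternative route one should note that $\mu$ takes only the two values $\pm\mu(v_0)$ by induction along a spanning tree before declaring the level sets a bipartition. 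Both routes are sound; the first is the cleaner proof to record.
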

\begin{theorem}[\cite{hj}]
Let $A,B \in \mathcal{M}_n(\mathbb{C})$. Suppose $A$ is non-negative and irreducible, and $A\geq|B|$. Let $\lambda\coloneqq e^{i\theta}\rho(B)$ be a maximum-modulus eigenvalue of $B$. If $\rho(A)=\rho(B)$, then there is a diagonal unitary matrix $D\in \mathcal{M}_n(\mathbb{C})$ such that $B=e^{i\theta} DAD^{-1}$.
\end{theorem}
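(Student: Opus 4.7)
The plan is to combine the Perron--Frobenius theorem for non-negative irreducible matrices with the equality case of the triangle inequality. By Perron--Frobenius, $\rho(A)$ is a simple eigenvalue of $A$ with a strictly positive eigenvector; moreover, any non-negative $u$ satisfying $Au \geq \rho(A)u$ in fact satisfies $Au = \rho(A)u$ and $u > 0$. I would start from an eigenvector $y \neq 0$ of $B$ for $\lambda$ and take entrywise absolute values: using $A \geq |B|$,
\begin{equation*}
\rho(B)|y| = |\lambda y| = |By| \leq |B|\,|y| \leq A\,|y|,
\end{equation*}
so the hypothesis $\rho(A) = \rho(B)$ yields $A|y| \geq \rho(A)|y|$. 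The Perron--Frobenius consequence quoted above then forces $A|y| = \rho(A)|y|$ together with $|y| > 0$, and consequently all three inequalities in the display are in fact equalities.

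Since $|y| > 0$, I can encode the phases of $y$ in a diagonal unitary matrix $D \coloneqq \text{diag}(y_1/|y_1|, \ldots, y_n/|y_n|)$, which satisfies $y = D|y|$. Define $C \coloneqq e^{-i\theta} D^{-1} B D$; then $|C| = |B| \leq A$, and
\begin{equation*}
C|y| = e^{-i\theta} D^{-1} B y = e^{-i\theta} D^{-1} \lambda y = \rho(A)\, D^{-1} y = \rho(A)|y|.
\end{equation*}
Proving $C = A$ entrywise will complete the proof, since then $B = e^{i\theta} D A D^{-1}$ follows immediately.

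The main step is this entrywise equality. Reading off row $i$ of $C|y| = \rho(A)|y|$ and of $A|y| = \rho(A)|y|$ gives
\begin{equation*}
\sum_j C_{ij}|y_j| = \rho(A)|y_i| = \sum_j A_{ij}|y_j|,
\end{equation*}
while $|C_{ij}| \leq A_{ij}$ and $|y_j| > 0$. The chain $\bigl|\sum_j C_{ij}|y_j|\bigr| \leq \sum_j |C_{ij}|\,|y_j| \leq \sum_j A_{ij}|y_j|$ is therefore saturated and its right-hand side is a positive real, so the equality case of the triangle inequality forces each summand $C_{ij}|y_j|$ to be a non-negative real with $|C_{ij}| = A_{ij}$; dividing by $|y_j| > 0$ yields $C_{ij} = A_{ij}$ in every entry. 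The hard part is precisely this conversion from a scalar equality of weighted row sums to an entrywise equality of matrices: the strict positivity $|y| > 0$ supplied by Perron--Frobenius is indispensable both for defining $D$ and for running the triangle-inequality argument, which is why the irreducibility of $A$ cannot be dropped.
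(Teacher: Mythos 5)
The paper does not prove this statement; it is quoted verbatim from Horn and Johnson (it is Wielandt's theorem, Theorem 8.4.5 there), so there is no in-paper argument to compare against. Your proof is correct and is essentially the standard one: the Perron--Frobenius ``subinvariance'' step forces $A|y|=\rho(A)|y|$ with $|y|>0$, the phase matrix $D$ reduces the problem to showing $C\coloneqq e^{-i\theta}D^{-1}BD$ equals $A$, and the saturated triangle inequality in each row, together with $|C_{ij}|\le A_{ij}$ and $|y_j|>0$, pins down $C_{ij}=A_{ij}$ entrywise.
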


In \cite{kkp}, Kannan et al. studied the normalized Laplacian matrix for gain graphs. They also characterized some spectral properties for the Randi\'c matrix of an un-oriented graph.

%%%%%%%%%%%%%%%%%%%%%%%%%%%%%%%%
%%%%%%%%%%%%%%%%%%%%%%%%%%%%%%%
\begin{lemma}[\cite{kkp}]
Let $X$ be a connected graph. Then $\spec(R(X))$= $\spec(-R(X))$ if and only if $X$ is bipartite.
\end{lemma}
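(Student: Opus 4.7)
The plan is to establish the equivalence in two directions; the backward direction will be a direct similarity argument, while the forward direction will rely on the Perron--Frobenius--style result of Horn and Johnson cited just above.

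For the ``if'' direction, suppose $X$ is bipartite with parts $V(X)=V_1\cup V_2$. I would introduce the sign diagonal matrix $D_\zeta=\mathrm{diag}(\zeta_1,\ldots,\zeta_n)$ with $\zeta_i=+1$ if $v_i\in V_1$ and $\zeta_i=-1$ if $v_i\in V_2$. Because every edge of $X$ has its endpoints in different parts, an entrywise check gives $D_\zeta R(X)D_\zeta=-R(X)$. Since $D_\zeta$ is invertible, $R(X)$ and $-R(X)$ are similar and hence $\spec(R(X))=\spec(-R(X))$.

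For the ``only if'' direction, assume $\spec(R(X))=\spec(-R(X))$. Since $X$ is connected, $R(X)$ is non-negative and irreducible, and from $R(X)=I-\mathfrak{L}(X)$ with $\mathfrak{L}(X)$ positive semidefinite whose largest eigenvalue is at most $2$, the Perron--Frobenius eigenvalue of $R(X)$ is $\rho(R(X))=1$. The spectral equality forces the spectrum of $R(X)$ to be symmetric about $0$, so $-1\in\spec(R(X))$ is a maximum-modulus eigenvalue corresponding to $\theta=\pi$. I would then apply the Horn--Johnson theorem quoted above with $A=B=R(X)$, obtaining a diagonal unitary matrix $U=\mathrm{diag}(u_1,\ldots,u_n)$ with $R(X)=-U R(X)U^{-1}$. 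Comparing the $(i,j)$ entry on an edge $v_iv_j$, where $R(X)_{ij}=1/\sqrt{d_id_j}$ is a positive real, yields $u_i\overline{u_j}=-1$, i.e.\ $u_j=-u_i$ for any adjacent pair. By connectivity of $X$, every $u_i$ lies in $\{u_1,-u_1\}$, and setting $V_1=\{v_i:u_i=u_1\}$, $V_2=\{v_i:u_i=-u_1\}$ gives a bipartition of $X$.

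The step I expect to be the main obstacle is extracting a $\pm1$ structure from the conclusion of the Horn--Johnson theorem, whose diagonal matrix $U$ is a priori only unitary with complex entries of modulus $1$. The resolution is the crucial observation that each nonzero off-diagonal entry of $R(X)$ is a positive real, which forces the ratio $u_i/u_j$ to equal $-1$ across any edge; connectivity of $X$ then propagates a consistent global sign pattern and produces the desired bipartition.
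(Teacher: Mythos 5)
Your argument is correct, but note that the paper itself offers no proof of this statement: it is quoted as a known result from the cited reference on normalized Laplacians of gain graphs, so there is no in-paper proof to compare against. Both of your directions check out. The ``if'' direction via the signature matrix $D_\zeta$ with $D_\zeta R(X)D_\zeta=-R(X)$ is the standard similarity argument and is airtight. For the ``only if'' direction, your use of Theorem 3.3 (Horn--Johnson) with $A=B=R(X)$ is legitimate: $R(X)$ is non-negative and irreducible by connectedness, $\rho(R(X))=1$ (which you can get either from $\mathfrak{L}(X)=I-R(X)$ having spectrum in $[0,2]$ with $0$ attained, or more directly from the similarity of $R(X)$ to the row-stochastic matrix $D^{-1}A(X)$), spectral symmetry then puts $-1$ in the spectrum as a maximum-modulus eigenvalue with $\theta=\pi$, and the resulting relation $R(X)=-UR(X)U^{-1}$ forces $u_i=-u_j$ across every edge because the off-diagonal entries on edges are strictly positive reals; connectivity then yields the two-coloring. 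This is in fact the same machinery the authors deploy in their proof of Theorem 3.4 (extracting a switching equivalence from Theorem 3.3 and reading off a diagonal unitary), so your route is entirely consistent with the paper's toolkit. A more elementary alternative for the forward direction, worth knowing, is to observe that $\spec(R(X))=-\spec(R(X))$ forces $\trace(R(X)^{2k+1})=0$ for all $k$, which kills all closed walks of odd length and hence all odd cycles; your Perron--Frobenius argument buys slightly more (the explicit bipartition via the unitary $U$) at the cost of invoking a heavier theorem.
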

\begin{lemma}[\cite{kkp}]
Let $\Phi_1$ and $\Phi_2$ be two connected gain graphs. If $\Phi_1\sim \Phi_2$, then
$$\spec (R(\Phi_1))=\spec (R(\Phi_2))$$
\end{lemma}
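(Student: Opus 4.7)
The plan is to transfer the well-known similarity relation for the adjacency matrices of switching-equivalent gain graphs to the Randi\'c matrices. Recall from the earlier discussion of the adjacency matrix $A(\Phi)$ in the paper that two gain graphs $\Phi_1 \sim \Phi_2$ are switching equivalent precisely when there exists a diagonal matrix $D_\zeta$ with unit-modulus diagonal entries (coming from the switching function $\zeta$) such that
\[
A(\Phi_2) = D_\zeta^{-1} A(\Phi_1) D_\zeta.
\]
I would begin the proof by invoking this characterization, which is stated explicitly in the preliminaries.

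Next, I would observe that switching equivalence does not change the underlying graph: by definition both $\Phi_1$ and $\Phi_2$ are gain graphs on the same graph $X$, so they share the same degree matrix $D = \operatorname{diag}(d_1,\ldots,d_n)$, and hence the same matrix $D^{-1/2}$. Because both $D^{-1/2}$ and $D_\zeta$ are diagonal, they commute. A direct computation then gives
\[
R(\Phi_2) = D^{-1/2} A(\Phi_2) D^{-1/2} = D^{-1/2} D_\zeta^{-1} A(\Phi_1) D_\zeta D^{-1/2} = D_\zeta^{-1} \bigl( D^{-1/2} A(\Phi_1) D^{-1/2} \bigr) D_\zeta = D_\zeta^{-1} R(\Phi_1) D_\zeta.
\]

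Thus $R(\Phi_1)$ and $R(\Phi_2)$ are similar via the invertible matrix $D_\zeta$, and similar matrices have identical spectra, which concludes the argument: $\spec(R(\Phi_1)) = \spec(R(\Phi_2))$.

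There is essentially no substantive obstacle here; the only thing to be careful about is to note explicitly that diagonal matrices commute, so that $D^{-1/2}$ can be moved past $D_\zeta^{-1}$ to form $R(\Phi_1)$ in the middle. Connectedness of the gain graphs is not actually needed for this particular conclusion — it is presumably stated for uniformity with the companion statements — so the proof sketched above works in full generality.
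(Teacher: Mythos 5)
Your argument is correct: the paper cites this lemma from the literature without proof, and your derivation via the stated characterization $A(\Phi_2)=D_\zeta^{-1}A(\Phi_1)D_\zeta$, together with the observation that the diagonal matrices $D^{-1/2}$ and $D_\zeta$ commute, is exactly the standard argument the cited source uses. The conclusion that $R(\Phi_1)$ and $R(\Phi_2)$ are similar, hence cospectral, is sound, and your remark that connectedness is not needed is also accurate.
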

%%%%%%%%%%%%%%%%%%%%%%%%%%%%%%%%%%%%%%%
\begin{lemma}[\cite{kkp}]
If $\Phi\coloneqq(X,\varphi)$ is a connected gain graph, then
$$\rho(R(\Phi))\leq \rho(|R(\Phi)|)=\rho(R(X)).$$
\end{lemma}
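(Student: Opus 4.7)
The plan is built on two observations, one elementary and one standard. First, I would show directly that $|R(\Phi)| = R(X)$: since $\varphi$ takes values in $\T$, every nonzero entry of $R(\Phi)$ has the form $\varphi(\overrightarrow{e_{ij}})/\sqrt{d_id_j}$ with $|\varphi(\overrightarrow{e_{ij}})|=1$, so
\[
|R(\Phi)|_{ij} \;=\; \frac{|\varphi(\overrightarrow{e_{ij}})|}{\sqrt{d_id_j}} \;=\; \frac{1}{\sqrt{d_id_j}} \quad \text{if } i\sim j \text{ in } X_U,
\]
and $0$ otherwise. That is exactly the $(i,j)$-entry of $R(X)$. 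Hence $\rho(|R(\Phi)|)=\rho(R(X))$ is immediate (they are literally the same matrix), which disposes of the equality in the lemma.

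The remaining inequality $\rho(R(\Phi))\le \rho(|R(\Phi)|)$ is a classical matrix-analytic fact: for every $A\in\mathcal M_n(\mathbb C)$ one has $\rho(A)\le\rho(|A|)$. I would give a short self-contained argument along the following lines. Let $\mu\in\spec(R(\Phi))$ with $|\mu|=\rho(R(\Phi))$, and let $\mathbf{x}$ be a corresponding eigenvector. Applying the triangle inequality entrywise to $R(\Phi)\mathbf{x}=\mu\mathbf{x}$ gives
\[
|R(\Phi)|\,|\mathbf{x}| \;\ge\; |R(\Phi)\mathbf{x}| \;=\; |\mu|\,|\mathbf{x}|,
\]
with the absolute value taken componentwise. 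Since $|R(\Phi)|=R(X)$ is a nonnegative irreducible matrix (the underlying graph $X$ is connected), the Collatz--Wielandt characterization of the Perron root yields
\[
\rho(|R(\Phi)|) \;\ge\; |\mu| \;=\; \rho(R(\Phi)),
\]
which is the desired bound. Combined with the first step, this completes the proof.

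There is essentially no obstacle here; the only thing to be careful about is the appeal to the Collatz--Wielandt inequality (equivalently, to the Perron--Frobenius theorem for nonnegative irreducible matrices), which requires $X$ to be connected — precisely the hypothesis assumed in the statement. The equality case, when $\rho(R(\Phi))=\rho(R(X))$, is exactly the situation that Theorem~3.3 from the excerpt addresses, and will presumably be used in the subsequent results of the paper to characterize the $\T$-gain graphs meeting this bound.
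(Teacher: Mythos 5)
Your proof is correct. Note that the paper itself offers no proof of this lemma --- it is quoted verbatim as a known result from the cited reference \cite{kkp} --- so there is nothing internal to compare against. Your two-step argument is the standard one and is sound: the identity $|R(\Phi)|=R(X)$ is immediate from $|\varphi(\overrightarrow{e_{ij}})|=1$, and the bound $\rho(R(\Phi))\leq\rho(|R(\Phi)|)$ follows from the entrywise triangle inequality applied to an eigenvector of a maximum-modulus eigenvalue together with the Collatz--Wielandt characterization of the Perron root of the nonnegative (and, by connectedness, irreducible) matrix $R(X)$. Your closing remark is also on target: the equality case is exactly what Theorem~3.3 (the Horn--Johnson result) handles, and that is how the paper uses this lemma in the proof of Theorem~3.4.
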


The following result is an immediate consequence of the preceding lemmas.
\begin{theorem}
Let $X$ be a mixed graph. Then $\spec (R^\omeg(X)=\spec (R(X_U))$ if and only if $(X_U,\omeg)\sim (X_U, \mathbf{1})$.
\end{theorem}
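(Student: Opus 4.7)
The forward implication is immediate from the preceding lemma on switching invariance of the Randi\'c spectrum. If $(X_U,\omeg) \sim (X_U,\mathbf{1})$, then applying that lemma to the gain graphs $\Phi = (X_U,\omeg)$ and $(X_U,\mathbf{1})$ gives $\spec(R^\omeg(X)) = \spec(R(\Phi)) = \spec(R((X_U,\mathbf{1}))) = \spec(R(X_U))$, since by construction $R^\omeg(X) = R(\Phi)$ and $R((X_U,\mathbf{1})) = R(X_U)$.

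For the converse I set $A := R(X_U)$ and $B := R^\omeg(X) = R(\Phi)$ and observe first that $A = |B|$: every nonzero entry of $B$ has the form $\zeta/\sqrt{d_i d_j}$ with $\zeta \in \{1,\omega,\overline{\omega}\} \subset \T$, so its modulus equals $1/\sqrt{d_i d_j}$, which is the corresponding entry of $A$. Because $X$ is connected, $A$ is a non-negative, symmetric, irreducible matrix, so by Perron--Frobenius $\rho(A)$ is itself a positive real eigenvalue of $A$. The hypothesis $\spec(B) = \spec(A)$ then makes $\rho(A)$ an eigenvalue of $B$, so $\rho(B) \geq \rho(A)$; together with the cited lemma giving $\rho(B) \leq \rho(|B|) = \rho(A)$, this forces $\rho(A) = \rho(B)$. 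Moreover the maximum-modulus eigenvalue of $B$ realizing $\rho(B)$ is a positive real number, so in the notation of the theorem cited from \cite{hj} we may take $\theta = 0$.

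All hypotheses of that theorem are thus satisfied for $(A,B)$, and it produces a diagonal unitary matrix $U \in \mathcal{M}_n(\mathbb{C})$ with $R^\omeg(X) = U R(X_U) U^{-1}$. Since the degree matrix $D$ is diagonal it commutes with $U$, so conjugating both sides by $D^{1/2}$ converts this into $H^\omeg(X) = U A(X_U) U^{-1}$, that is, $A(\Phi) = U A((X_U,\mathbf{1})) U^{-1}$. As $U$ is diagonal with entries in $\T$, this is exactly the matricial criterion for switching equivalence recorded in Section~2, so $(X_U,\omeg) \sim (X_U,\mathbf{1})$.

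The main obstacle is securing the phase $\theta = 0$ in the application of the theorem from \cite{hj}: this requires identifying a maximum-modulus eigenvalue of $B$ with the Perron eigenvalue $\rho(A)$, which is precisely where the full equality $\spec(B) = \spec(A)$ is used rather than the weaker statement $\rho(B) = \rho(A)$. Once $\theta = 0$ is in hand, everything else is routine assembly of facts already recorded in the excerpt together with the commutativity of the degree normalization $D^{1/2}$ with any diagonal matrix.
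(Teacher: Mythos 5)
Your proof is correct, and the converse direction takes a genuinely different (and more economical) route than the paper's. The paper applies the Horn--Johnson theorem with an unspecified maximum-modulus eigenvalue of $B=R^{\bm{\omega}}(X)$, obtains $H^{\bm{\omega}}(X)=e^{i\theta}D_\zeta A(X_U)D_\zeta^{-1}$, uses Hermiticity of both sides to force $\theta\in\{0,\pi\}$, and then must dispose of the $\theta=\pi$ branch: in that case $(X_U,\bm{\omega})\sim(X_U,-\mathbf{1})$, whence $\spec(R(X_U))=\spec(-R(X_U))$, so $X_U$ is bipartite by Lemma 3.1, and Theorems 3.1 and 3.2 are then invoked to conclude that $(X_U,\bm{\omega})$ is positive and hence switching equivalent to $(X_U,\mathbf{1})$. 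You instead exploit the full spectral equality at the outset: Perron--Frobenius puts $+\rho(A)$ in $\spec(A)=\spec(B)$, so $+\rho(B)$ itself is a maximum-modulus eigenvalue of $B$ that you may feed into the Horn--Johnson theorem, pinning $\theta=0$ immediately and bypassing the bipartiteness machinery entirely. (One small wording caution: you should say that $+\rho(B)$ is \emph{a} maximum-modulus eigenvalue of $B$ rather than \emph{the} one, since $-\rho(B)$ may also be an eigenvalue; the theorem lets you choose whichever maximum-modulus eigenvalue you like, so this does not affect validity. Also, $\rho(A)=\rho(B)$ follows directly from $\spec(A)=\spec(B)$, so the appeal to Lemma 3.3 is not strictly needed.) The remaining assembly --- $A=|B|$, irreducibility from connectedness, conjugation by $D^{1/2}$ commuting with the diagonal unitary, and the matricial criterion for switching equivalence --- matches the paper. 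Your version buys a shorter proof that does not depend on Lemma 3.1, Theorem 3.1, or Theorem 3.2; the paper's version has the side benefit of making the bipartite case explicit.
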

\begin{proof}
If $\spec (R^\omeg(X))$=$\spec (R(X_U))$, then by Theorem 3.3 $R^\omeg(X)=e^{i\theta}D_\zeta R(X_U) D_\zeta^{-1}$, where $D_\zeta$ is a diagonal unitary matrix. Hence
\begin{align*}
&D_\zeta^{-1}R^\omeg(X)D_\zeta= e^{i\theta}R(X_U)\\
\text{or}, \;\;\;\;\; &D_\zeta^{-1}D^{-1/2}H^\omeg(X)D^{-1/2}D_\zeta= e^{i\theta}D^{-1/2}A(X_U)D^{-1/2}\\
\text{or}, \;\;\;\;\; & H^\omeg(X)=e^{i\theta}D_\zeta A(X_U)D_\zeta^{-1}.
\end{align*}
Since both the matrices $H^\omeg(X)$ and $A(X_U)$ are Hermitian, $\theta$ is either $0$ or $\pi$. This gives that either $(X_U,\omeg)\sim (X_U,\mathbf{1})$ or $(X_U,\omeg)\sim (X_U,-\mathbf{1})$. If $(X_U,\omeg)\sim (X_U,\mathbf{1})$, we are done. If $(X_U,\omeg)\sim (X_U,-\mathbf{1})$, then by Lemma 3.2, we have $\spec (R^\omeg(X))$=$\spec (-R(X_U))$. Again as $\spec (R^\omeg(X))$=$\spec (R(X_U))$, we have $\spec (R(X_U))$=$\spec (-R(X_U))$. Thus by Lemma 3.1, $X_U$ is bipartite. Now applying Theorem 3.2 for the positive gain graph $(X_U,-\omeg)$, we find that $(X_U,\omeg)$ is positive, and hence $(X_U,\omeg)\sim (X_U,\mathbf 1)$.

Conversely, if  $(X_U,\omeg)\sim (X_U,\mathbf 1)$, then clearly $\spec (R^\omeg(X))$=$\spec (R(X_U))$.
\end{proof}

In order to determine some spectral properties of the matrix $R^\omeg(X)$, we now provide the following lemma. In what follows, $h_{ij}$ always represents the $ij$-th entry of $H^\omeg(X)$.
\begin{lemma} \label{3.4}
 If $X$ be a mixed graph on $n$ vertices, and $\y\coloneqq(y_1,\ldots,y_n)^t\in\mathbb{C}^n$, then $$\y^*H^\omeg(X)\y=\sum\limits_{i<j,\; e_{ij}\in E(X_U)}(|y_i+h_{ij}y_j|^2-(|y_i|^2+|y_j|^2)).$$
\end{lemma}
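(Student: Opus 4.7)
The plan is to compute $\y^*H^\omeg(X)\y$ directly and compare it term-by-term with the expansion of the claimed right-hand side. The key observations are that $H^\omeg(X)$ is Hermitian with zero diagonal, and that every nonzero off-diagonal entry $h_{ij}$ has modulus one (since it lies in $\{1,\omega,\overline{\omega}\}$).

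First I would write
\[
\y^*H^\omeg(X)\y \;=\; \sum_{i,j} \overline{y_i}\, h_{ij}\, y_j \;=\; \sum_i h_{ii}|y_i|^2 + \sum_{i<j}\bigl(\overline{y_i} h_{ij} y_j + \overline{y_j} h_{ji} y_i\bigr).
\]
Since $h_{ii}=0$ and, by the Hermitian property, $h_{ji}=\overline{h_{ij}}$, the cross terms combine into $2\operatorname{Re}(\overline{y_i} h_{ij} y_j)$, and only those $(i,j)$ with $e_{ij}\in E(X_U)$ contribute (otherwise $h_{ij}=0$). This gives
\[
\y^*H^\omeg(X)\y \;=\; \sum_{i<j,\;e_{ij}\in E(X_U)} 2\operatorname{Re}\bigl(\overline{y_i} h_{ij} y_j\bigr).
\]

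Next I would expand $|y_i+h_{ij}y_j|^2 = (y_i+h_{ij}y_j)\overline{(y_i+h_{ij}y_j)}$, using $|h_{ij}|=1$ whenever $e_{ij}\in E(X_U)$, to obtain
\[
|y_i+h_{ij}y_j|^2 = |y_i|^2 + |y_j|^2 + 2\operatorname{Re}\bigl(\overline{y_i} h_{ij} y_j\bigr).
\]
Rearranging yields $|y_i+h_{ij}y_j|^2 - (|y_i|^2+|y_j|^2) = 2\operatorname{Re}(\overline{y_i} h_{ij} y_j)$. Summing over the edges of $X_U$ and comparing with the previous display gives the identity.

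There is essentially no obstacle; the only subtlety to flag in writing this up is that $|h_{ij}|=1$ must hold for every edge of $X_U$, which is exactly why $\omega,\overline{\omega}\in\T$ was a natural choice in the definition of $H^\omeg(X)$. If one tried the same identity for an arbitrary Hermitian weighted adjacency matrix whose weights did not all have modulus one, a correction factor $|h_{ij}|^2$ would appear on the $|y_j|^2$ term and the stated formula would fail.
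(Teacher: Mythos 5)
Your argument is correct and is essentially the paper's own proof: both reduce $\y^*H^\omeg(X)\y$ to $\sum_{i<j,\,e_{ij}\in E(X_U)}(\overline{y_i}h_{ij}y_j+y_i\overline{h_{ij}}\,\overline{y_j})$ and then invoke $|h_{ij}|^2=1$ to identify this with $|y_i+h_{ij}y_j|^2-|y_i|^2-|y_j|^2$, the paper completing the square where you expand it. Your closing remark about where $|h_{ij}|=1$ is genuinely used is a nice touch but does not change the substance.
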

\begin{proof}
We have
\begin{align*}
\y^*H^\omeg(X)\y=&\sum\limits_{i<j,\; e_{ij}\in E(X_U)}(\overline{y}_ih_{ij}y_j+y_i{\overline{h}_{ij}}\overline{y}_j)\\
=&\sum\limits_{i<j,\; e_{ij}\in E(X_U)}((\overline{y}_ih_{ij}y_j+y_i{\overline{h}_{ij}}\overline{y}_j)+\overline{y}_iy_i+\overline{y}_jy_j-\overline{y}_iy_i-\overline{y}_jy_j)\\
=&\sum\limits_{i<j,\; e_{ij}\in E(X_U)}((\overline{y}_ih_{ij}y_j+\overline{y}_iy_i)+(y_i{\overline{h}_{ij}}\overline{y}_j+\overline{y}_j\overline{h}_{ij}h_{ij}y_j)-(|y_i|^2+|y_j|^2))\\
=&\sum\limits_{i<j,\; e_{ij}\in E(X_U)}((y_i+h_{ij}y_j)(\overline{y_i+h_{ij}y_j})-(|y_i|^2+|y_j|^2)),\;\;\;\;\;\;\text{as}\; |h_{ij}|^2=1\\
=&\sum\limits_{i<j,\; e_{ij}\in E(X_U)}(|y_i+h_{ij}y_j|^2-(|y_i|^2+|y_j|^2)). \qedhere
\end{align*}
\end{proof}
%%%%%%%%%%%%%%%%%%%%%%%%%%%%%%%%%%%%%%%%%%%%%%%%%%%%%%%%%%%%%%%%%%%%%%%%%%%%%%%%%%%%%%%%%%%%%%%%%%%%%%%%%%%%%%%%%%%%%%%%%%%%%%%%%%%%%%

Using the preceding lemma, we have the following theorem.
\begin{theorem}
Let $X$ be a mixed graph of order $n$, where $n\geq2$. If $\spec(R^\omeg(X))=\{\lambda_1,\ldots,\lambda_n\}$, then $ -1\leq\lambda_k\leq 1$ for each $k\in \{1,\ldots, n\}$.
\end{theorem}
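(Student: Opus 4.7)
The plan is to derive the two inequalities $\lambda \geq -1$ and $\lambda \leq 1$ from the Rayleigh quotient characterization of the eigenvalues of the Hermitian matrix $R^\omeg(X)$, together with Lemma \ref{3.4}. Since $R^\omeg(X)$ is Hermitian, every eigenvalue $\lambda_k$ satisfies $\lambda_{\min} \leq \x^* R^\omeg(X) \x \leq \lambda_{\max}$ for all unit vectors $\x \in \mathbb{C}^n$, and conversely all eigenvalues are attained as such Rayleigh quotients for eigenvectors.

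The key substitution is $\y = D^{-1/2}\x$. Then $\x^* R^\omeg(X)\x = \y^* H^\omeg(X) \y$, and the normalization $\|\x\|^2 = 1$ becomes $\sum_i d_i |y_i|^2 = 1$. Crucially, the right-hand side equals $\sum_{i<j,\,e_{ij} \in E(X_U)} (|y_i|^2 + |y_j|^2)$, since each vertex $v_i$ contributes $|y_i|^2$ once per incident edge, for a total of $d_i |y_i|^2$. So applying Lemma \ref{3.4} directly yields
\[
\x^* R^\omeg(X)\x \;=\; \sum_{i<j,\,e_{ij} \in E(X_U)} |y_i + h_{ij} y_j|^2 \;-\; 1 \;\geq\; -1,
\]
which proves the lower bound for every eigenvalue.

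For the upper bound, I would invoke the parallelogram identity $|y_i + h_{ij} y_j|^2 + |y_i - h_{ij} y_j|^2 = 2(|y_i|^2 + |y_j|^2)$, valid because $|h_{ij}| = 1$. Rearranging gives $|y_i + h_{ij} y_j|^2 - (|y_i|^2 + |y_j|^2) = (|y_i|^2 + |y_j|^2) - |y_i - h_{ij} y_j|^2$, so the expression from Lemma \ref{3.4} can be recast as
\[
\y^* H^\omeg(X)\y \;=\; \sum_{i<j,\,e_{ij}\in E(X_U)}\bigl((|y_i|^2+|y_j|^2) - |y_i - h_{ij} y_j|^2\bigr) \;=\; 1 - \sum_{i<j,\,e_{ij} \in E(X_U)} |y_i - h_{ij} y_j|^2 \;\leq\; 1.
\]
Combining the two bounds yields $-1 \leq \lambda_k \leq 1$ for each $k$.

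I do not foresee a significant obstacle: the only substantive step is recognizing that the normalization $\|\x\| = 1$ transforms exactly into the sum $\sum_{i<j}(|y_i|^2+|y_j|^2)$ that appears in Lemma \ref{3.4}, after which the parallelogram identity handles the upper bound symmetrically. A minor care point is that the case $d_i = 0$ is excluded by the standing assumption that $X$ is connected with at least two vertices, so $D^{-1/2}$ is well-defined.
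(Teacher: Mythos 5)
Your proof is correct and follows essentially the same route as the paper: the Rayleigh quotient characterization of the eigenvalues of the Hermitian matrix $R^\omeg(X)$, the substitution $\y = D^{-1/2}\x$ together with Lemma \ref{3.4}, and the observation that $\sum_i d_i|y_i|^2 = \sum_{i<j,\,e_{ij}\in E(X_U)}(|y_i|^2+|y_j|^2)$. Your use of the parallelogram identity for the upper bound is just the identity form of the paper's inequality $|a+b|^2 \leq 2|a|^2+2|b|^2$, so the two arguments coincide.
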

\begin{proof}
For a complex vector $\x$, let $\y\coloneqq D^{-1/2}\x$, where $D$ is the diagonal degree matrix of the underlying graph $X_U$. Let $y_i$ be the $i$-th coordinate of $\y$. By Lemma~\ref{3.4}, we have 
\begin{align*}
\x^*R^\omeg(X)\x &=\y^*D^{1/2}D^{-1/2}H^\omeg(X)D^{-1/2}D^{1/2}\y\\
& =\y^*H^\omeg(X)\y\\
& =\sum\limits_{i<j,\; e_{ij}\in E(X_U)}(|y_i+h_{ij}y_j|^2-|y_i|^2-|y_j|^2).
\end{align*}
Also, $\x^*\x=\sum\limits^{n}_{i=1}d_i|y_i|^2$. By Courant-Fischer Theorem, we have
\begin{align*}
\lambda_k=&\underset{\x^{(1)},\ldots,\x^{(k-1)}\in \mathbb{C}^n}{\mathrm{max}}\;\;\; \underset{\underset{\x\in\mathbb{C}^n,\; \x\neq 0}{\x\perp \{\x^{(1)},\ldots,\x^{(k-1)}\}}}{\mathrm{min}}\frac{\x^*R^\omeg(X)\x}{\x^*\x}\\
=&\underset{\x^{(1)},\ldots,\x^{(k-1)}\in \mathbb{C}^n}{\mathrm{max}}\;\;\; \underset{\underset{x\in\mathbb{C}^n, \; \x\neq 0}{\x\perp \{\x^{(1)},\ldots,\x^{(k-1)}\}}}{\mathrm{min}}\frac{\sum\limits_{i<j,\; e_{ij}\in E(X_U)}(|y_i+h_{ij}y_j|^2-|y_i|^2-|y_j|^2)}{\sum\limits^n_{i=1}d_i|y_i|^2}.\numberthis\label{eq 1.3.1}
\end{align*}
Note that $|a+b|^2\leq 2|a|^2+2|b|^2 $ for two complex numbers $a$ and $b$. Therefore, we have

\begin{align*}
 \sum\limits_{i< j,\; e_{ij}\in E(X_U)}(|y_i+h_{ij}y_j|^2-|y_i|^2-|y_j|^2)\leq&\sum\limits_{i< j,\; e_{ij}\in E(X_U)}(2|y_i|^2+2|h_{ij}y_j|^2-|y_i|^2-|y_j|^2)\\
 =&\sum\limits_{i< j,\; e_{ij}\in E(X_U)}(|y_i|^2+|y_j|^2),\; \;\;\;\;\;\;\text{as}\;|h_{ij}|=1\\
 =&\sum\limits^n_{i=1}d_i|y_i|^2.\numberthis\label{eq 1.3.2}
 \end{align*}
 From Equation (\ref{eq 1.3.1}) and Equation (\ref{eq 1.3.2}), we have $\lambda_k\leq 1$. Again
 \begin{align*}
 \sum\limits_{i< j,\; e_{ij}\in E(X_U)}(|y_i+h_{ij}y_j|^2-|y_i|^2-|y_j|^2)\geq&\sum\limits_{i< j,\; e_{ij}\in E(X_U)}(-|y_i|^2-|y_j|^2)\\
=&-\sum\limits^n_{i=1}d_i|y_i|^2. \numberthis\label{eq 1.3.3}
\end{align*}
Hence from Equation (\ref{eq 1.3.1}) and Equation (\ref{eq 1.3.3}), we have $\lambda_k\geq -1.$
\end{proof}
%%%%%%%%%%%%%%%%%%%%%%%%%%%%%%%%%%%%%%%%%%%%%%%%%%%%%%%%%%%%%%%%%%%%%%%%%%%%%%%%%%%%%%%%%%%%%%%%%%%%%%%%%%%%%%%%%%%%%%%%%%%%%%%%%%%%%%%%%%%%

Eigenvalue interlacing is a popular technique for generating inequality and regularity conclusions regarding graph structure in terms of eigenvalues. We provide an edge version of interlacing properties for $R^\omeg(X)$. First, we present two basic inequalities in the following lemma. The proofs are straight forward.
\begin{lemma}\label{l6}
Let $a$, $b$, and $c$ be three real numbers such that $b > 0$, $c > 0$ and $b-c > 0$.
\begin{enumerate}[(i)]
\item If $\frac{a}{b}\leq 1$, then $\frac{a-c}{b-c}\leq \frac{a}{b}$.
\item If $|\frac{a}{b}|\leq 1$, then $\frac{a+c}{b-c}\geq \frac{a}{b}$.
\end{enumerate}
\end{lemma}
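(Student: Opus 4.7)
The statement is a pair of elementary arithmetic inequalities, so the plan is simply to clear denominators and reduce each one to a hypothesis that has been assumed. Since the hypotheses guarantee $b>0$ and $b-c>0$, multiplication by either denominator preserves the direction of the inequality, and no case split on signs is needed.

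For part (i), my plan is to cross-multiply $\frac{a-c}{b-c}\le\frac{a}{b}$ by the positive quantity $b(b-c)$. After expanding $(a-c)b$ and $a(b-c)$, the terms $ab$ cancel on both sides, leaving the equivalent inequality $-cb\le -ac$, i.e.\ $ac\le bc$. Dividing by $c>0$ reduces this to $a\le b$, which is exactly the assumption $a/b\le 1$ together with $b>0$. Reading the chain of equivalences backwards gives the desired conclusion.

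For part (ii), my plan is identical in spirit: cross-multiply $\frac{a+c}{b-c}\ge\frac{a}{b}$ by the positive quantity $b(b-c)$ to obtain the equivalent inequality $(a+c)b\ge a(b-c)$. Expanding and cancelling $ab$ on both sides collapses this to $c(a+b)\ge 0$. Since $c>0$, it suffices to show $a+b\ge 0$. The hypothesis $|a/b|\le 1$ combined with $b>0$ yields $-b\le a\le b$, and in particular $a\ge -b$, which is what is needed.

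There is no real obstacle here: the only thing one has to be careful about is to verify at each cross-multiplication step that the factor being multiplied is strictly positive (so that the inequality direction is preserved), and this is guaranteed by the three positivity assumptions $b>0$, $c>0$, $b-c>0$ stated in the lemma. Both parts therefore reduce to a one-line algebraic manipulation followed by invoking the corresponding hypothesis on $a/b$.
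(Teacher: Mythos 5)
Your proof is correct: both cross-multiplications are justified by $b>0$ and $b-c>0$, and each part reduces cleanly to $a\le b$ (from $a/b\le 1$) and $a+b\ge 0$ (from $|a/b|\le 1$), respectively. The paper omits the proof entirely, stating only that it is straightforward, so your argument simply supplies the intended elementary verification.
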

%%%%%%%%%%%%%%%%%%%%%%%%%%%%%%%%%%%%%%%%%%%%%%%%%%%%%%%%%%%%%%%%%%%%%%%%%%%%%%%%
\begin{theorem}
Let $X$ be a mixed graph on $n$ vertices and $X-e$ be the graph obtained by removing the edge $e$ of $X$. Let $\spec (R^\omeg(X))=\{\lambda_1,\ldots,\lambda_n\}$ and $\spec (R^\omeg(X-e))=\{\theta_1,\ldots,\theta_n\}$. Then $$\lambda_{k-1} \leq \theta_k \leq \lambda _{k+1}$$
for each $k\in \{1,\ldots,n\}$ with the convention that $\lambda_0=-1$ and $\lambda_{n+1}=1$.
\end{theorem}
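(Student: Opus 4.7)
The plan is to recast the spectra of $R^\omeg(X)$ and $R^\omeg(X-e)$ via a common generalized Rayleigh quotient in the variable $\y\in\mathbb{C}^n$, and then to shrink a Courant--Fischer-optimal subspace by a single linear constraint on which Lemma~\ref{l6} directly compares the two quotients. Writing $N_X(\y) := \y^* H^\omeg(X)\y$ and $T_X(\y) := \y^* D\y$, the substitution $\x = D^{1/2}\y$ converts the Rayleigh quotient of $R^\omeg(X)$ at $\x$ into $N_X(\y)/T_X(\y)$, giving the variational formulas
\begin{equation*}
\lambda_{k+1} = \min_{\dim S=k+1}\;\max_{\y\in S\setminus\{0\}}\frac{N_X(\y)}{T_X(\y)}, \qquad \lambda_{k-1} = \max_{\dim S=n-k+2}\;\min_{\y\in S\setminus\{0\}}\frac{N_X(\y)}{T_X(\y)},
\end{equation*}
and the same formulas for $\theta_k$ with $X-e$ in place of $X$. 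Writing $e = e_{pq}$ and $c := |y_p|^2 + |y_q|^2$, Lemma~\ref{3.4} gives $N_{X-e}(\y) = N_X(\y) - (|y_p + h_{pq}y_q|^2 - c)$ and $T_{X-e}(\y) = T_X(\y) - c$.

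The crucial step is to restrict attention to the codimension-one hyperplanes $H_+ := \{\y : y_p = h_{pq}y_q\}$ and $H_- := \{\y : y_p = -h_{pq}y_q\}$, which are exactly tailored to force the single edge contribution $|y_p + h_{pq}y_q|^2 - c$ to attain its extreme values $\pm c$. On $H_+$, $|y_p + h_{pq}y_q|^2 = 4|y_q|^2 = 2c$, so
\begin{equation*}
\frac{N_{X-e}(\y)}{T_{X-e}(\y)} = \frac{N_X(\y) - c}{T_X(\y) - c} \leq \frac{N_X(\y)}{T_X(\y)}
\end{equation*}
by Lemma~\ref{l6}(i), whose hypothesis $N_X(\y)/T_X(\y)\leq 1$ is supplied by the preceding theorem (the eigenvalue bound). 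On $H_-$, $|y_p + h_{pq}y_q|^2 = 0$, so
\begin{equation*}
\frac{N_{X-e}(\y)}{T_{X-e}(\y)} = \frac{N_X(\y) + c}{T_X(\y) - c} \geq \frac{N_X(\y)}{T_X(\y)}
\end{equation*}
by Lemma~\ref{l6}(ii), whose hypothesis $|N_X(\y)/T_X(\y)|\leq 1$ likewise follows from the preceding theorem. The degenerate case $c = 0$ reduces both identities to the trivial equality $N_{X-e}(\y)/T_{X-e}(\y) = N_X(\y)/T_X(\y)$.

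The two interlacing inequalities now follow by standard dimension counting. For $k\leq n-1$, choose a subspace $S$ of dimension $k+1$ with $\max_{\y\in S}N_X(\y)/T_X(\y) = \lambda_{k+1}$; since $\dim(S\cap H_+)\geq (k+1)+(n-1)-n = k$, any $k$-dimensional $U\subseteq S\cap H_+$ satisfies $\max_{\y\in U}N_{X-e}(\y)/T_{X-e}(\y)\leq \lambda_{k+1}$, and the min-max formula for $\theta_k$ yields $\theta_k\leq\lambda_{k+1}$. Dually, for $k\geq 2$, choose $S$ of dimension $n-k+2$ with $\min_{\y\in S}N_X(\y)/T_X(\y) = \lambda_{k-1}$; then $\dim(S\cap H_-)\geq n-k+1$, and the max-min formula applied on this intersection yields $\theta_k\geq\lambda_{k-1}$. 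The extreme cases $\theta_1\geq -1 = \lambda_0$ and $\theta_n\leq 1 = \lambda_{n+1}$ are immediate from the preceding theorem applied to $X-e$.

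The main obstacle is the choice of the hyperplanes $H_\pm$ themselves. Because removing an edge perturbs both $H^\omeg$ and $D$ simultaneously, neither a Cauchy interlacing of principal submatrices nor a Weyl rank bound gives the sharp shift-by-one estimate. The constraint $y_p = \pm h_{pq}y_q$ is precisely the substitution that collapses the simultaneous change in numerator and denominator into the one-variable inequalities supplied by Lemma~\ref{l6}; anything short of this alignment leaves an uncontrolled cross term $|y_p + h_{pq}y_q|^2$ in the Rayleigh quotient.
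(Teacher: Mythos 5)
Your proof is correct and follows essentially the same route as the paper: both rewrite the eigenvalues as Courant--Fischer extrema of the quotient $\y^*H^\omeg(X)\y/\y^*D\y$, restrict to the hyperplane $y_p=\pm h_{pq}y_q$ (the paper phrases this as $\x\perp(\sqrt{d_2}e_1-h_{12}\sqrt{d_1}e_2)$, which is the same constraint after the substitution $\y=D^{-1/2}\x$), and invoke Lemma~\ref{l6} to compare the two quotients. The only difference is cosmetic: you use the subspace form of Courant--Fischer with explicit dimension counting, while the paper adds one vector to the orthogonality constraints.
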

\begin{proof}
From Equation (\ref{eq 1.3.1}), we have
\begin{align*}
\lambda_k=&\underset{\x^{(1)},\ldots,\x^{(k-1)}\in \mathbb{C}^n}{\mathrm{max}}\;\;\;\; \underset{\underset{\x\in\mathbb{C}^n, \; \x\neq 0}{\x\perp \{\x^{(1)},\ldots,\x^{(k-1)}\}}}{\mathrm{min}}\frac{\sum\limits_{i<j,\; e_{ij}\in E(X_U)}(|y_i+h_{ij}y_j|^2-|y_i|^2-|y_j|^2)}{\sum\limits^n_{i=1}d_i|y_i|^2},
\end{align*}
where $\y=D^{-1/2}\x$. Using min-max version of Courant-Fischer theorem, we can also write Equation (\ref{eq 1.3.1}) as
\begin{align}\label{minmax}
\lambda_k=&\underset{\x^{(k+1)},\ldots,\x^{(n)}\in \mathbb{C}^n}{\mathrm{min}}\;\;\;\; \underset{\underset{\x\in\mathbb{C}^n, \; \x\neq 0}{\x\perp \{\x^{(k+1)},\ldots,\x^{(n)}\}}}{\mathrm{max}}\frac{\sum\limits_{i<j,\; e_{ij}\in E(X_U)}(|y_i+h_{ij}y_j|^2-|y_i|^2-|y_j|^2)}{\sum\limits^n_{i=1}d_i|y_i|^2}.
\end{align}

Without loss of generality, let $e$ be the edge joining the vertex $v_1$ and $v_2$. After deleting the edge $e$, the degrees of the vertices $v_1$ and $v_2$ are decreased by 1. Hence for $G-e$, the expression $\sum\limits_{i<j,\; e_{ij}\in E(X_U)}(|y_i+h_{ij}y_j|^2-|y_i|^2-|y_j|^2)$ becomes $\sum\limits_{i< j,\; e_{ij}\in E(X_U)}(|y_i+h_{ij}y_j|^2-|y_i|^2-|y_j|^2)-(|y_1+h_{12}y_2|^2-|y_1|^2-|y_2|^2)$
and ${\sum\limits^n_{i=1}d_i|y_i|^2}\;\;\text{becomes}\;\;{\sum\limits^n_{i=1}d_i|y_i|^2}-|y_1|^2-|y_2|^2$. Therefore
\[\Resize{15.0cm}{
\theta_k= \underset{\x^{(1)},\ldots,\x^{(k-1)}\in \mathbb{C}^n}{\mathrm{max}}\;\;\; \underset{\underset{\x\in\mathbb{C}^n, \; \x\neq 0}{\x\perp \{\x^{(1)},\ldots,\x^{(k-1)}\}}}{\mathrm{min}}\frac{\scriptstyle{\sum\limits_{i< j,\; e_{ij}\in E(X_U)}((|y_i+h_{ij}y_j|^2-|y_i|^2-|y_j|^2)-(|y_1+h_{12}y_2|^2-|y_1|^2-|y_2|^2))}}{\sum\limits^n_{i=1}d_i|y_i|^2-|y_1|^2-|y_2|^2}.}\]
%%%%%%%%%%%%%%%%%%%%%%%%%%%%%%%%%

Let $x_i$ and $y_i$ be the $i$-th coordinates of $\x$ and $\y$, respectively. Choose $x_1$ and $x_2$ such that $\sqrt{d_2}x_1=h_{12}\sqrt{d_1}x_2$. Then$y_1=h_{12}y_2$, and so $|y_1+h_{12}y_2|^2=2(|y_1|^2+|y_2|^2)$. Thus for $a=|y_i+h_{ij}y_j|^2-|y_i|^2-|y_j|^2$, $b=\sum\limits^n_{i=1}d_i|y_i|^2$ and $c=2|y_1|^2$ in Lemma 3.5, we have 

$$\Resize{15.0cm}{\frac{\sum\limits_{i<j,\; e_{ij}\in E(X_U)}(|y_i+h_{ij}y_j|^2-|y_i|^2-|y_j|^2)-2|y_1|^2}{\sum\limits^n_{i=1}d_i|y_i|^2-2|y_1|^2}\leq \frac{\sum\limits_{i<j,\; e_{ij}\in E(X_U)}(|y_i+h_{ij}y_j|^2-|y_i|^2-|y_j|^2)}{\sum\limits^n_{i=1}d_i|y_i|^2}.}$$

 Note that if $\x\perp (\sqrt{d_2}e_1-h_{12}\sqrt{d_1}e_2)$, then $\sqrt{d_2}x_1=h_{12}\sqrt{d_1}x_2$, where the vectors $e_1, \;e_2$ are standard basis vectors of $\mathbb{C}^n$. Thus,
%%%%%%%%%%%%%%%%%%%%%%%%
\begin{align*}
\theta_k\leq& \Resize{14.0cm}{\underset{\x^{(1)},\ldots,\x^{(k-1)}\in \mathbb{C}^n}{\mathrm{max}}\;\;\; \underset{\underset{\x\in\mathbb{C}^n,\; \x\neq 0, \; \sqrt{d_2}x_1=h_{12}\sqrt{d_1}x_2}{\x\perp \{\x^{(1)},\ldots,\x^{(k-1)}\}}}{\mathrm{min}}\frac{\sum\limits_{i<j,\; e_{ij}\in E(X_U)}(|y_i+h_{ij}y_j|^2-|y_i|^2-|y_j|^2)-2|y_1|^2}{\sum\limits^n_{i=1}d_i|y_i|^2-2|y_1|^2}}\\
=&\Resize{14.2cm}{\underset{\x^{(1)},\ldots,\x^{(k-1)}\in \mathbb{C}^n}{\mathrm{max}}\;\;\; \underset{\underset{\x\in\mathbb{C}^n, \; \x\neq 0}{\x\perp \{\x^{(1)},\ldots,\x^{(k-1)},\sqrt{d_2}e_1-h_{12}\sqrt{d_1}e_2\}}}{\mathrm{min}}\frac{\sum\limits_{i<j,\; e_{ij}\in E(X_U)}(|y_i+h_{ij}y_j|^2-|y_i|^2-|y_j|^2)-2|y_1|^2}{\sum\limits^n_{i=1}d_i|y_i|^2-2|y_1|^2}}\\
\leq& \Resize{14.2cm}{\underset{\x^{(1)},\ldots,\x^{(k-1)}\in \mathbb{C}^n}{\mathrm{max}}\;\;\; \underset{\underset{\x\in\mathbb{C}^n, \;\x\neq 0}{\x\perp \{\x^{(1)},\ldots,\x^{(k-1)},\sqrt{d_2}e_1-h_{12}\sqrt{d_1}e_2\}}}{\mathrm{min}}\frac{\sum\limits_{i<j,\; e_{ij}\in E(X_U)}(|y_i+h_{ij}y_j|^2-|y_i|^2-|y_j|^2)}{\sum\limits^n_{i=1}d_i|y_i|^2}}\\
\leq& \underset{\x^{(1)},\ldots,\x^k\in \mathbb{C}^n}{\mathrm{max}}\;\;\; \underset{\underset{\x\in\mathbb{C}^n, \;\x\neq 0}{\x\perp \{\x^{(1)},\ldots,\x^{(k)}\}}}{\mathrm{min}}\frac{\sum\limits_{i<j,\; e_{ij}\in E(X_U)}(|y_i+h_{ij}y_j|^2-|y_i|^2-|y_j|^2)}{\sum\limits^n_{i=1}d_i|y_i|^2}=\lambda_{k+1}.
\end{align*}
%%%%%%%%%%%%%%%%%%%%%%%%%%%%%%%%%%%%%%%%

Similarly, from Equation (\ref{minmax}) we have

\begin{align*}
\theta_k =&\Resize{13.9cm}{\underset{\x^{(k+1)},\ldots,\x^{(n)}\in \mathbb{C}^n}{\mathrm{min}}\;\;\; \underset{\underset{\x\in\mathbb{C}^n, \; \x\neq 0}{\x\perp \{\x^{(k+1)},\ldots,\x^{(n)}\}}}{\mathrm{max}}\frac{\sum\limits_{i<j,\; e_{ij}\in E(X_U)}(|y_i+h_{ij}y_j|^2-|y_i|^2-|y_j|^2)-(|y_1+h_{12}y_2|^2-|y_1|^2-|y_2|^2)}{\sum\limits^n_{i=1}d_i|y_i|^2-|y_1|^2-|y_2|^2}}\\
& \geq \Resize{13.5cm}{\underset{\x^{(k+1)},\ldots,\x^{(n)}\in \mathbb{C}^n}{\mathrm{min}}\;\;\; \underset{\underset{\x\in\mathbb{C}^n, \; \x\neq 0, \sqrt{d_2}x_1=-h_{12}\sqrt{d_1} x_2}{\x\perp \{\x^{(k+1)},\ldots,\x^{(n)}\}}}{\mathrm{max}}\frac{\scriptstyle{\sum\limits_{i<j,\; e_{ij}\in E(X_U)}(|y_i+h_{ij}y_j|^2-|y_i|^2-|y_j|^2)-(|y_1+h_{12}y_2|^2-|y_1|^2-|y_2|^2)}}{\sum\limits^n_{i=1}d_i|y_i|^2-|y_1|^2-|y_2|^2}}\\
&=\Resize{13.5cm}{ \underset{\x^{(k+1)},\ldots,\x^{(n)}\in \mathbb{C}^n}{\mathrm{min}}\;\;\; \underset{\underset{\x\in\mathbb{C}^n, \;\x\neq 0}{\x\perp \{\x^{(k+1)},\ldots,\x^{(n)},\sqrt{d_2}e_1+h_{12} \sqrt{d_1} e_2\}}}{\mathrm{max}}\frac{\sum\limits_{i<j,\; e_{ij}\in E(X_U)}(|y_i+h_{ij}y_j|^2-|y_i|^2-|y_j|^2)+2|y_1|^2}{\sum\limits^n_{i=1}d_i|y_i|^2-2|y_1|^2}.}
\end{align*}

Again taking $a=|y_i+h_{ij}y_j|^2-|y_i|^2-|y_j|^2$, $b=\sum\limits^n_{i=1}d_i|y_i|^2$ and $c=2|y_1|^2$, we find from Equations (\ref{eq 1.3.2}) and (\ref{eq 1.3.3}) that $|\frac{a}{b}|\leq 1$. Therefore Lemma \ref{l6} (ii) gives 

$$\Resize{15.0cm}{\frac{\sum\limits_{i<j,\; e_{ij}\in E(X_U)}(|y_i+h_{ij}y_j|^2-|y_i|^2-|y_j|^2)+2|y_1|^2}{\sum\limits^n_{i=1}d_i|y_i|^2-2|y_1|^2}\geq \frac{\sum\limits_{i<j,\; e_{ij}\in E(X_U)}(|y_i+h_{ij}y_j|^2-|y_i|^2-|y_j|^2)}{\sum\limits^n_{i=1}d_i|y_i|^2}.}$$ 
Thus
%%%%%%%%%%%%%%%%%%%%%%%%
\begin{align*}
\theta_k & \geq \Resize{14.0cm}{\underset{\x^{(k+1)},\ldots,\x^{(n)}\in \mathbb{C}^n}{\mathrm{min}}\;\;\; \underset{\underset{\x\in\mathbb{C}^n, \;\x\neq 0}{\x\perp \{\x^{(k+1)},\ldots,\x^{(n)},\sqrt{d_2}e_1+h_{12} \sqrt{d_1} e_2\}}}{\mathrm{max}}\frac{\sum\limits_{i<j,\; e_{ij}\in E(X_U)}(|y_i+h_{ij}y_j|^2-|y_i|^2-|y_j|^2)}{\sum\limits^n_{i=1}d_i|y_i|^2}}\\
& \geq \underset{\x^{(k)},\ldots,\x^{(n)}\in \mathbb{C}^n}{\mathrm{min}}\;\;\; \underset{\underset{\x\in\mathbb{C}^n, \;\x\neq 0}{\x\perp \{\x^{(k)},\ldots,\x^{(n)}\}}}{\mathrm{max}}\frac{\sum\limits_{i<j,\; e_{ij}\in E(X_U)}(|y_i+h_{ij}y_j|^2-|y_i|^2-|y_j|^2)}{\sum\limits^n_{i=1}d_i|y_i|^2}\\
&=\lambda_{k-1}.
\end{align*}

Thus, $\lambda_{k-1} \leq \theta_k \leq \lambda _{k+1}$ with the convention that $\lambda_0=-1$ and $\lambda_{n+1}=1$.
\end{proof}
%%%%%%%%%%%%%%%%%%%%%%%%%%%%%%%%%%%%%%%%%%%%%%%%%%%%%%%%%%%%%%%%%%%%%%%%%%%%%%%%%%%%%%%%%%%%%%%%%%%%%%%%%%%%%%%%%%%%%%%%%%%%%%%%%%%%%%%%%%%%

Let $S_H(X)\coloneqq(s_{H_{ke}})$ be an $n\times m$ matrix indexed by the vertices and edges of a mixed graph $X$, with $|s_{H_{ke}}|=1$ whenever $k$ is incident to $e$, and
$$
s_{H_{ke}}=\left\{
  \begin{array}{ll}
    -s_{H_{\ell e}} & \hbox{if $e=e_{k\ell}$} \\
    -\omega s_{H_{\ell e}} & \hbox{if $e=\overrightarrow{e_{k\ell}}$} \\
    \;\; \;0 & \hbox{otherwise.}
  \end{array}
\right.
$$

If $D$ is the diagonal degree matrix of the underlying graph $X_U$, $D^{-1/2}S_H(X)\coloneqq (s_{ke})$ and $\left(D^{-1/2}S_H(X)\right)\left(D^{-1/2}S_H(X)\right)^*=(\alpha_{k\ell})_{n\times n}$, then
$$\alpha_{k\ell}=\sum\limits_{e\in E(X)}s_{ke}\overline s_{\ell e}=\sum\limits_{e\in E(X)}\frac{1}{\sqrt{d_kd_\ell}}s_{H_{ke}}\overline s_{H_{\ell e}}.$$
Thus $\alpha_{kk}=\sum\limits_{e\in E(X)}\frac{1}{\sqrt{d_kd_k}}s_{H_{ke}}\overline s_{H_{ke}}=\sum\limits_{e\in E(X)}\frac{1}{d_k}|s_{H_{ke}}|^2=\frac{1}{d_k}.  d_k$=1. Now assume that $k\neq \ell$.
\begin{enumerate}
\item[(i)] For ${e_{k\ell}}\in E(X)$,
$$\Resize{14.0cm}{\alpha_{k\ell}=s_{ke}\overline s_{le}=\frac{1}{\sqrt{d_k}}s_{H_{ke}}\frac{1}{\sqrt{d_\ell}}\overline s_{H_{\ell e}}=\frac{1}{\sqrt{d_kd_\ell}}(-s_{H_{\ell e}})\overline s_{H_{le}}=-\frac{1}{\sqrt{d_kd_\ell}}\left|s_{H_{\ell e}}\right|^2=-\frac{1}{\sqrt{d_kd_\ell}}.}$$
\item[(ii)] For $\overrightarrow{e_{k\ell}}\in E(X)$,
$$\alpha_{k\ell}=s_{ke}\overline s_{\ell e}=\frac{1}{\sqrt{d_k}}s_{H_{ke}}\frac{1}{\sqrt{d_\ell}}\overline s_{H_{\ell e}}=\frac{1}{\sqrt{d_kd_\ell}}(-\omega s_{H_{le}})\overline s_{H_{\ell e}}=\frac{-\omega}{\sqrt{d_kd_\ell}}\left|s_{H_{\ell e}}\right|^2=\frac{-\omega}{\sqrt{d_kd_\ell}}.$$
\item[(iii)] For $\overrightarrow{e_{\ell k}}\in E(X)$,
$$\alpha_{k\ell}=s_{ke}\overline s_{\ell e}=\frac{1}{\sqrt{d_k}}s_{H_{ke}}\frac{1}{\sqrt{d_\ell}}\overline s_{H_{\ell e}}=\frac{1}{\sqrt{d_kd_\ell}}(s_{H_{k e}})\overline{-\omega s_{H_{k e}}}=\frac{-\overline{\omega}}{\sqrt{d_kd_\ell}}\left|s_{H_{k e}}\right|^2=\frac{-\overline{\omega}}{\sqrt{d_kd_\ell}}.$$
\end{enumerate}

Thus, $R^\omeg(X)=I-\left(D^{-1/2}S_H(X)\right)\left(D^{-1/2}S_H(X)\right)^*$.
\begin{lemma}[\cite{yq}]\label{lem 3.5}
 A mixed graph $X$ is positive if and only if for any two vertices $v_i$ and $v_j$ all paths from $v_i$ to $v_j$ have the same value.
 \end{lemma}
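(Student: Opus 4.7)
The plan hinges on separating the phase and magnitude of a walk value. Writing $R^\omeg_{ij} = \frac{1}{\sqrt{d_i d_j}}\, \omeg(\overrightarrow{e_{ij}})$ (with $\omeg(\overrightarrow{e_{ij}})=1$ when $i\sim j$), the value of a walk $W = v_{i_1}\cdots v_{i_\ell}$ factors as a strictly positive real $m(W) = \prod_{k} 1/\sqrt{d_{i_k}d_{i_{k+1}}}$ times its \emph{gain} $g(W) := \prod_{k}\omeg(\overrightarrow{e_{i_k i_{k+1}}})$. Consequently, a cycle $C$ is positive (its value lies in $\mathbb{R}_{>0}$) precisely when $g(C) = 1$. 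For two paths $P_1, P_2$ between the same endpoints, the concatenation $P_1\overline{P_2}$ is a closed walk of value $m\cdot g(P_1)\, g(P_2)^{-1}$ with $m>0$; the condition ``$P_1, P_2$ have the same value'' thus amounts to $g(P_1) = g(P_2)$. Accordingly, the lemma reduces to showing that every cycle has gain $1$ if and only if every pair of paths with a common pair of endpoints has a common gain.

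($\Rightarrow$) Assume $g(C)=1$ for every cycle $C$. I first show by induction on length that every closed walk $W$ has $g(W)=1$. If $W$ has no repeated interior vertex, it is a simple cycle and we are done. Otherwise, pick an interior vertex that repeats, and split $W$ there into two strictly shorter closed walks $W_1,W_2$; induction gives $g(W_1)=g(W_2)=1$, hence $g(W)=g(W_1)g(W_2)=1$. Applying this to $W := P_1\overline{P_2}$, and using $g(\overline{P_2})=g(P_2)^{-1}$ (which follows from $\omeg(\overrightarrow{e_{ba}})=\omeg(\overrightarrow{e_{ab}})^{-1}$), we obtain $g(P_1)=g(P_2)$.

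($\Leftarrow$) Conversely, assume every pair of paths with common endpoints has a common gain, and let $C=v_1 v_2\cdots v_k v_1$ be a cycle (so $k\geq 3$, since $X_U$ is simple). Consider the single-edge path $P_1 = v_1 v_2$ and the long route $P_2 = v_1 v_k v_{k-1}\cdots v_2$, both from $v_1$ to $v_2$. By hypothesis $g(P_1)=g(P_2)$, and unwinding the definitions gives $g(C) = g(P_1)\,g(P_2)^{-1}=1$, so $C$ is positive.

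The main subtlety is the closed-walk-to-cycle reduction in the forward direction; the vertex-splitting induction is the standard device that handles this cleanly, and it is the step I would verify most carefully. The rest is a routine unwinding of the definitions of gain and reverse walk.
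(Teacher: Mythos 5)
The paper does not actually prove this lemma: it is imported verbatim from \cite{yq} and used as a black box in the proof of Theorem~3.8, so there is no in-house argument to compare yours against. Taken on its own, your proof is the standard balance argument (cycles have trivial gain $\Leftrightarrow$ closed walks have trivial gain $\Leftrightarrow$ the gain of a path depends only on its endpoints) and it is correct in substance. Two points need tightening. First, in the induction on closed walks, the terminal case ``no repeated interior vertex'' also covers closed walks of length $2$ of the form $v_av_bv_a$, which are not cycles in a simple underlying graph; you must note separately that their gain is $\omeg(\overrightarrow{e_{ab}})\,\omeg(\overrightarrow{e_{ba}})=1$. This is immediate from $\varphi(\overrightarrow{e_{ba}})=\varphi(\overrightarrow{e_{ab}})^{-1}$, but it is a genuine base case of your induction and cannot be folded into ``it is a simple cycle.''

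Second, and more substantively, your reduction of ``same value'' to ``same gain'' is asserted rather than justified, and under the paper's literal definition it is not an equivalence. The value of a walk here is a product of $R^\omeg$-entries, so it carries the positive factor $m(P)=\prod_k 1/\sqrt{d_{i_k}d_{i_{k+1}}}$, and two paths between the same endpoints generically have different $m(P)$ even when every cycle is positive; read literally, the forward implication of the lemma would then be false for any positive graph containing a cycle. The statement is correct only if ``value'' is read at the level of gains (equivalently, as the product of $H^\omeg$-entries), which is exactly how the paper uses it in Theorem~3.8, where the value of $W_{1r}$ is taken to be $h_{12}\cdots h_{(r-1)r}$. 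You clearly worked with the gain throughout, which is the right call, but the sentence claiming the two conditions ``amount to'' the same thing should be replaced by an explicit statement that the lemma is being interpreted in terms of gains; only the implication ``same value $\Rightarrow$ same gain'' (used in your converse direction) is unconditional. With those two repairs the argument is complete.
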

 \begin{theorem}
 Let $X$ be a connected mixed graph. If 1 is an eigenvalue of $R^\omeg(X)$, then $X$ is positive and 1 is a simple eigenvalue of $R^\omeg(X)$.
 \end{theorem}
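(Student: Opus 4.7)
The plan is to deduce, from the eigenvalue equation $R^\omeg(X)\x=\x$, a very rigid ``transport'' relation $y_i=h_{ij}y_j$ along every edge (where $\y=D^{-1/2}\x$), and then read off both positivity of $X$ and simplicity of the eigenvalue from this relation together with connectedness.

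First I would pick a unit eigenvector $\x$ of $R^\omeg(X)$ with eigenvalue $1$ and set $\y\coloneqq D^{-1/2}\x$. Applying Lemma~\ref{3.4} exactly as in the proof of Theorem 3.5 gives
$$\x^*R^\omeg(X)\x=\sum_{i<j,\;e_{ij}\in E(X_U)}\bigl(|y_i+h_{ij}y_j|^2-|y_i|^2-|y_j|^2\bigr),$$
while on the other hand $\x^*\x=\sum_i d_i|y_i|^2=\sum_{i<j,\;e_{ij}\in E(X_U)}(|y_i|^2+|y_j|^2)$. Subtracting, the eigenvalue condition $\x^*R^\omeg(X)\x=\x^*\x$ rewrites as
$$\sum_{i<j,\;e_{ij}\in E(X_U)}\bigl(|y_i+h_{ij}y_j|^2-2|y_i|^2-2|y_j|^2\bigr)=0.$$
Every summand is non-positive (this is exactly the inequality used to prove $\lambda_k\le 1$ in Theorem 3.5, using $|h_{ij}|=1$), so every summand must vanish. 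Equality in $|a+b|^2\le 2|a|^2+2|b|^2$ is equivalent, via the parallelogram law, to $a=b$; taking $a=y_i$, $b=h_{ij}y_j$ this forces
$$y_i=h_{ij}y_j\quad\text{for every edge }e_{ij}\in E(X_U).$$

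Next I would show all coordinates of $\y$ are nonzero. Since $\x\neq 0$, some $y_{i_0}\neq 0$; the relation $y_i=h_{ij}y_j$ combined with $|h_{ij}|=1$ propagates non-vanishing along any edge, so by connectedness of $X_U$ every $y_i\neq 0$. Now for an arbitrary mixed cycle $v_{i_1}v_{i_2}\cdots v_{i_\ell}v_{i_1}$, iterating the relation (which also holds with indices reversed because $h_{ji}=\overline{h_{ij}}$ and $|h_{ij}|=1$) gives $y_{i_1}=h_{i_1i_2}h_{i_2i_3}\cdots h_{i_\ell i_1}\,y_{i_1}$, hence $h_{i_1i_2}h_{i_2i_3}\cdots h_{i_\ell i_1}=1$. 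Since $R^\omeg_{ij}$ differs from $h_{ij}$ only by the positive scalar $1/\sqrt{d_id_j}$, the value of this cycle is a positive real number, so every cycle of $X$ is positive; by definition $X$ itself is positive.

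For simplicity, suppose $\y,\y'$ are two eigenvectors for the eigenvalue $1$. Both satisfy $y_i=h_{ij}y_j$ and $y'_i=h_{ij}y'_j$ on every edge, and by the previous paragraph neither has any zero coordinate, so the ratio $y'_i/y_i$ is defined at every vertex and is preserved along every edge. Connectedness of $X_U$ forces this ratio to be a single constant $c$, so $\y'=c\y$ and the $1$-eigenspace is one-dimensional. The main conceptual step is the parallelogram-law reduction from the eigenvalue equation to the pointwise relation $y_i=h_{ij}y_j$; once that is in hand, positivity and simplicity both fall out of a straightforward connectedness argument, so I do not expect a serious obstacle.
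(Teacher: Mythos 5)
Your proof is correct, and it reaches the same pivotal edge relation $y_i=h_{ij}y_j$ as the paper, but by a different mechanism. The paper constructs the incidence-type matrix $S_H(X)$, verifies the factorization $I-R^\omeg(X)=\bigl(D^{-1/2}S_H(X)\bigr)\bigl(D^{-1/2}S_H(X)\bigr)^*$, and reads the edge relation off from $\bigl(D^{-1/2}S_H(X)\bigr)^*\x=0$; it then invokes Lemma~\ref{lem 3.5} (equality of all path values characterizes positivity) to conclude $X$ is positive, and writes the eigenvector explicitly in terms of path values to get simplicity. You instead reuse the quadratic form of Lemma~\ref{3.4} and run the equality case of $|a+b|^2\le 2|a|^2+2|b|^2$ via the parallelogram law; this is really the same sum-of-squares decomposition in disguise, since $2|y_i|^2+2|y_j|^2-|y_i+h_{ij}y_j|^2$ is exactly the squared modulus of the paper's per-edge term, but your route avoids introducing $S_H(X)$ altogether and leans on a lemma already proved for Theorem 3.5, which is an economy. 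You also verify positivity by transporting $y$ around an arbitrary cycle to force its gain to equal $1$, rather than citing the path-value criterion; both are fine, and your nonvanishing-of-coordinates observation (propagated by connectedness) cleanly delivers both the cycle computation and the one-dimensionality of the eigenspace. No gaps.
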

 \begin{proof}
 Assume that $1$ is an eigenvalue of $R^\omeg(X)$ with corresponding eigenvector $\x$. If $\x=(x_1,\ldots,x_n)^t$, we have
 \begin{align*} &R^\omeg(X)\x=\x  \\
 \text{or,} \;\;\;\;\; &(I-R^\omeg(X))\x=0\\
 \text{or,} \;\;\;\;\; &(D^{-1/2}S_H(X)(D^{-1/2}S_H(X))^*)\x=0\\
 \text{or,} \;\;\;\;\; &\langle D^{-1/2}S_H(X)(D^{-1/2}S_H(X))^*\x,\x \rangle=0\\
 \text{or,} \;\;\;\;\; &\langle (D^{-1/2}S_H(X))^*\x,(D^{-1/2}S_H(X))^*\x \rangle=0\\
 \text{or,}\;\;\;\;\; & (D^{-1/2}S_H(X))^*\x=0.
 \end{align*}
Thus if $e$ is an edge of $X$ with end vertices $v_i$ and $v_j$, we have $((D^{-1/2}S_H(X))^*\x)_e=0$, and this gives $\overline{s}_{H_{ie}}d_i^{-1/2}x_i+\overline{s}_{H_{je}}d_j^{-1/2}x_j=0$.

%%%%%%%%%%%%%%%%%%%%%%%%%

Note that $s_{H_{ie}}\overline{s}_{H_{je}}=-h_{ij}$, and so $x_i=\sqrt{\frac{d_i}{d_j}}h_{ij}x_j$ for any edge incident to $v_i$ and $v_j$. Let $W_{1k}\coloneqq u_1u_2\ldots u_k$ be any $u_1u_k$-path such that $u_1=v_1$ and  $u_k=v_j$. Also, let $W_{1r}$ be the $u_1u_r$-section of the path $W_{1k}$, where $2\leq r\leq k$. For $W_{1r}=u_1u_2\ldots u_{r-1}u_r$, let $h(W_{1r})=h_{12}\ldots h_{(r-1)r}$, the value of $W_{1r}$.

We have $x_1=\sqrt{\frac{d_1}{d_2}}h_{12}x_2=\sqrt{\frac{d_1}{d_3}}h_{12}h_{23}x_3=\cdots=\sqrt{\frac{d_1}{d_i}}h(W_{1i})x_i.$ This implies that each $v_iv_j$-path has the same value. Hence by Lemma~\ref{lem 3.5}, $X$ is positive. Moreover, $\x=(x_1,\ldots, x_i)^t =(x_1,\sqrt{\frac{d_2}{d_1}}\overline{h(W_{12})}x_1,\ldots,\sqrt{\frac{d_i}{d_1}}\overline{h(W_{1i})}x_1)^t$, so $$\x =x_1\bigg(1,\sqrt{\frac{d_2}{d_1}}\overline{h(W_{12})},\ldots,\sqrt{\frac{d_i}{d_1}}\overline{h(W_{1i})}\bigg)^t.$$

Hence $1$ is an eigenvalue of $R^\omeg(X)$ with multiplicity $1$.
 \end{proof}
%%%%%%%%%%%%%%%%%%%%%%%%%%%%%%%%%%%%%%%%%%%%%%%%%%%%%%%%%%%%%%%%%%%%
Yu et al. \cite{ydsj}, in their study of Hermitian normalized Laplacian matrix for mixed networks, established that a graph is bipartite if and only if all of its eigenvalues are symmetric about 1. The symmetric characteristics of the $R^\omeg(X)$ eigenvalues can also be determined in a similar manner.
\begin{theorem}
If $X$ is a connected mixed graph, then $X$ is bipartite if and only if all eigenvalues of $R^\omeg(X)$ are symmetric about 0.
\end{theorem}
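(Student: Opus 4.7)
The plan is to prove the two directions separately. For the easy direction, I build a switching function from the bipartition and exhibit a diagonal unitary similarity between $R^\omeg(X)$ and $-R^\omeg(X)$. For the converse, I extract such a similarity from the symmetric-spectrum hypothesis by applying Theorem 3.3 to the pair $(R(X_U),R^\omeg(X))$, and then use Lemma 3.1 to conclude bipartiteness of $X_U$.

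Concretely, in the forward direction I would let $V(X)=V_1\sqcup V_2$ be the bipartition of $X_U$ and define $\zeta\colon V(X)\to\{+1,-1\}$ by $\zeta\equiv 1$ on $V_1$ and $\zeta\equiv -1$ on $V_2$, with $D_\zeta$ the corresponding diagonal matrix. For every edge $e_{ij}$ of $X_U$ one endpoint lies in each part, so $\zeta(v_i)\zeta(v_j)^{-1}=-1$ and hence $(D_\zeta R^\omeg(X) D_\zeta^{-1})_{ij}=-R^\omeg_{ij}$, while both matrices vanish on non-edges. Therefore $D_\zeta R^\omeg(X) D_\zeta^{-1}=-R^\omeg(X)$, so $\spec(R^\omeg(X))=\spec(-R^\omeg(X))$, which is exactly the statement that the eigenvalues are symmetric about $0$.

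For the converse, I assume $\spec(R^\omeg(X))$ is symmetric about $0$ and set $\rho\coloneqq\rho(R^\omeg(X))$, so that both $\rho$ and $-\rho$ are maximum-modulus eigenvalues of $R^\omeg(X)$. Taking $A\coloneqq R(X_U)$ (non-negative and irreducible because $X_U$ is connected) and $B\coloneqq R^\omeg(X)$, the equality $|B|=A$ together with Theorem 3.3 applied twice—once with $\theta=0$ and once with $\theta=\pi$—would produce diagonal unitary matrices $D_1,D_2$ with $R^\omeg(X)=D_1 R(X_U) D_1^{-1}=-D_2 R(X_U) D_2^{-1}$. Then $D_2^{-1}D_1$ is a diagonal unitary conjugating $R(X_U)$ to $-R(X_U)$; in particular $\spec(R(X_U))=\spec(-R(X_U))$, and Lemma 3.1 applied to the connected un-oriented graph $X_U$ yields bipartiteness.

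The main obstacle is the hypothesis $\rho(A)=\rho(B)$ that Theorem 3.3 demands, since $\rho(R(X_U))=1$ always (the vector $D^{1/2}\mathbf{1}$ is a positive eigenvector of $R(X_U)$ for eigenvalue $1$), whereas $\rho(R^\omeg(X))\le 1$ only, and the symmetric-spectrum hypothesis by itself does not force equality. My plan to bridge this gap is to exploit the symmetric hypothesis more carefully: under it one can show the existence of a diagonal unitary implementing $D_\zeta R^\omeg D_\zeta^{-1}=-R^\omeg$ directly (since bipartiteness of $X_U$ is precisely the existence of such a $\pm 1$-valued $\zeta$, as follows from the analogue of the argument just before Theorem 3.8), thereby mirroring the strategy of Yu, Das and Sun-Jung \cite{ydsj} for the symmetric-about-$1$ characterization of the Hermitian normalized Laplacian matrix, where this case distinction is handled by passing through the Perron-Frobenius analysis of $|R^\omeg|=R(X_U)$.
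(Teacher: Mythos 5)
Your forward direction is correct and complete: the $\pm1$ switching $D_\zeta$ built from the bipartition gives $D_\zeta R^\omeg(X)D_\zeta^{-1}=-R^\omeg(X)$, hence $\spec(R^\omeg(X))=\spec(-R^\omeg(X))$. (The paper itself supplies no detailed argument for either direction --- it only notes $R^\omeg(X)=I-\mathfrak{L}^\omeg(X)$ and defers to Theorem 3.5 of \cite{ydsj} --- so there is no step-by-step comparison to make; the question is only whether your converse stands on its own.) It does not. The obstacle you flag is fatal rather than technical: Theorem 3.3 requires $\rho(R^\omeg(X))=\rho(R(X_U))=1$, and this fails even for bipartite mixed graphs to which the theorem applies. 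Take $C_4$ with exactly one oriented edge: the cycle gain is $\omega$, the eigenvalues of $R^\omeg$ are $\cos(\pi/12+k\pi/2)$ for $k=0,\dots,3$, which are symmetric about $0$ while the spectral radius is $\cos(\pi/12)<1$. So the Horn--Johnson route is unavailable precisely in the regime the theorem is about, and your fallback --- ``show directly that some diagonal unitary conjugates $R^\omeg(X)$ to $-R^\omeg(X)$'' --- is circular: for a connected graph such a conjugation forces $\zeta(v_i)\zeta(v_j)^{-1}=-1$ across every edge, which \emph{is} a bipartition. You have restated the goal, not derived it from the symmetric-spectrum hypothesis.

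There is also a structural reason no argument of this shape can close the gap: your converse never uses the value $\omega=\frac{1+\i\sqrt{3}}{2}$. The statement is false for general $\T$-gain graphs: a triangle whose cycle gain is $\i$ has Randi\'c eigenvalues $\{\sqrt{3}/2,\,0,\,-\sqrt{3}/2\}$, symmetric about $0$, yet is not bipartite. That gain cannot occur here --- cycle gains of $(X_U,\omeg)$ are sixth roots of unity, and $\pm\i$ is not one --- which is exactly why the second-kind matrix admits this theorem while the first-kind Hermitian matrix (entries $\pm\i$) does not: the cyclically oriented triangle already breaks the first-kind analogue. Any correct proof of the converse must therefore exploit the sixth-root structure of the gains, for instance through the possible values of odd cycles or through the coefficient formula for the characteristic polynomial in Section 4; a purely Perron--Frobenius argument that sees $R^\omeg(X)$ only as a Hermitian matrix with $|R^\omeg(X)|=R(X_U)$ cannot succeed.
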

\begin{proof}
Because of $R^\omeg(X)=I-\mathfrak{L}^\omeg(X)$, the proof is analogous to the proof of Theorem 3.5 in \cite{ydsj}.
\end{proof}
%%%%%%%%%%%%%%%%%%%%%%%%%%%%%%%%%%%%%%%%%%%%%%%%%%
\begin{theorem}[\cite{ydsj}] \label{th 1.3.1}
If $X$ is a connected mixed graph, then $2$ is an eigenvalue of $\mathfrak{L}^\omeg(X)$ if and only if $X$ is a positive bipartite graph.
\end{theorem}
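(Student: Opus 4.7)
The plan is to use the identity $R^\omeg(X)=I-\mathfrak{L}^\omeg(X)$ to rephrase the claim: $2\in\spec(\mathfrak{L}^\omeg(X))$ if and only if $-1\in\spec(R^\omeg(X))$. Hence the theorem reduces to showing that $R^\omeg(X)$ has $-1$ as an eigenvalue exactly when $X$ is positive and bipartite.

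For the \emph{if} direction, I would assume $X$ is positive bipartite. Positivity of $X$ together with Theorem 3.1 gives $(X_U,\omeg)\sim(X_U,\1)$, and Lemma 3.2 then yields $\spec(R^\omeg(X))=\spec(R(X_U))$. Since $X_U$ is connected and bipartite, Lemma 3.1 makes $\spec(R(X_U))$ symmetric about $0$; combined with the Perron eigenvalue $1$ of the Randi\'c matrix of a connected graph, this forces $-1\in\spec(R(X_U))=\spec(R^\omeg(X))$, so $2\in\spec(\mathfrak{L}^\omeg(X))$.

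For the \emph{only-if} direction, I would apply Theorem 3.3 with $A:=R(X_U)$ and $B:=R^\omeg(X)$. Because $|\omega|=|\overline{\omega}|=1$, we have $|B|=A$ entrywise, and $A$ is nonnegative irreducible since $X_U$ is connected. Lemma 3.3 gives $\rho(B)\leq\rho(A)=1$; the hypothesis $-1\in\spec(B)$ forces $\rho(B)=1$ and identifies $-1=e^{i\pi}\rho(B)$ as a maximum-modulus eigenvalue of $B$. Theorem 3.3 then supplies a diagonal unitary matrix $D_\zeta$ with
\[
R^\omeg(X)=-D_\zeta R(X_U)D_\zeta^{-1},
\]
which, after conjugating by $D^{1/2}$ and using that diagonal matrices commute, rewrites as $H^\omeg(X)=-D_\zeta A(X_U)D_\zeta^{-1}$. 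By the switching-equivalence criterion stated earlier in the paper, this means $(X_U,\omeg)\sim(X_U,-\1)$, or equivalently $(X_U,-\omeg)\sim(X_U,\1)$; Theorem 3.1 then makes the gain graph $(X_U,-\omeg)$ positive.

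The main obstacle is the last step: upgrading the single fact ``$(X_U,-\omeg)$ is positive'' to the conjunction ``$X_U$ bipartite and $(X_U,\omeg)$ positive''. I would first apply Theorem 3.2 to the connected positive gain graph $(X_U,-\omeg)$, which produces the equivalence ``$X_U$ bipartite $\iff (X_U,\omeg)$ positive''. To rule out the case in which both fail, I would read off the switching function $\zeta$ explicitly from the relation $-\omeg(\overrightarrow{e_{ij}})=\zeta(i)^{-1}\zeta(j)$, take products around each cycle $C$ of length $k_C$ to obtain the identity $(-1)^{k_C}\mu_C(\omeg)=1$ on every cycle value, and combine this with the restricted range $\omeg(\overrightarrow{e_{ij}})\in\{1,\omega,\overline{\omega}\}$ of admissible gains to conclude that each $k_C$ must be even and each $\mu_C(\omeg)$ must equal $1$, i.e., $X_U$ is bipartite and $X$ is positive, completing the converse.
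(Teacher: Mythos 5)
The paper offers no proof of this statement---it is quoted from \cite{ydsj} and used as a black box---so your proposal has to be judged against the statement itself rather than against an internal argument. Your reduction to $-1\in\spec(R^\omeg(X))$, your ``if'' direction, and the Perron--Frobenius step in the ``only if'' direction are all sound: from $-1\in\spec(R^\omeg(X))$ you correctly obtain $H^\omeg(X)=-D_\zeta A(X_U)D_\zeta^{-1}$, hence $(X_U,\omeg)\sim(X_U,-\1)$, i.e.\ $(X_U,-\omeg)$ is positive.

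The final step is where the argument genuinely fails, and you rightly flagged it as the main obstacle. Multiplying $-\omeg(\overrightarrow{e_{ij}})=\zeta(i)^{-1}\zeta(j)$ around a cycle $C$ of length $k_C$ does give $(-1)^{k_C}\mu_C(\omeg)=1$, where $\mu_C(\omeg)$ denotes the gain of $C$; but $\mu_C(\omeg)$ is a product of elements of $\{1,\omega,\overline{\omega}\}$ and hence an arbitrary sixth root of unity, and $-1=\omega^3$ is one of these. The identity is therefore perfectly consistent with $k_C$ odd and $\mu_C(\omeg)=-1$, and does not force $k_C$ to be even. This is not a repairable gap but a counterexample to the ``only if'' direction as stated: for the cyclically oriented triangle $1\to 2\to 3\to 1$ the matrix $H^\omeg$ is the circulant $\omega P+\overline{\omega}P^2$ with spectrum $\{1,1,-2\}$, so $\spec(R^\omeg)=\{1/2,1/2,-1\}$ and $2\in\spec(\mathfrak{L}^\omeg)$, although this graph is neither bipartite nor positive (its unique cycle has value $\omega^3/8=-1/8$). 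What your argument actually establishes---and what is the correct characterization---is that $2\in\spec(\mathfrak{L}^\omeg(X))$ if and only if $(X_U,\omeg)\sim(X_U,-\1)$, equivalently every cycle of length $k$ has gain $(-1)^k$; ``positive and bipartite'' is a sufficient special case of this condition but not a necessary one. Any correct proof of the theorem in the quoted form would have to rule out odd cycles of gain $-1$, which cannot be done for the gain set $\{1,\omega,\overline{\omega}\}$, so before relying on this statement (and on the Corollary the paper derives from it) one should check whether \cite{ydsj} actually asserts it in this form or uses a different notion of positivity.
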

Noting that $R^\omeg(X)=I-\mathfrak{L}^\omeg(X)$, we get the following corollary from Theorem \ref{th 1.3.1}.
\begin{corollary}
If $X$ is a connected mixed graph, then $-1$ is an eigenvalue of $R^\omeg(X)$ if and only if $X$ is a positive bipartite graph.
\end{corollary}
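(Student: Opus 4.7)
The corollary follows almost immediately from Theorem \ref{th 1.3.1} together with the identity $R^\omeg(X) = I - \mathfrak{L}^\omeg(X)$ noted earlier in the excerpt, so my plan is essentially a one-line spectral translation.

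My approach is to observe that since $R^\omeg(X) = I - \mathfrak{L}^\omeg(X)$, the matrices $R^\omeg(X)$ and $\mathfrak{L}^\omeg(X)$ have the same eigenvectors, and their eigenvalues are related by $\lambda \mapsto 1 - \lambda$. Concretely, if $\mathfrak{L}^\omeg(X)\x = \mu\x$, then $R^\omeg(X)\x = (I - \mathfrak{L}^\omeg(X))\x = (1-\mu)\x$, and conversely. Therefore $-1 \in \spec(R^\omeg(X))$ if and only if $2 \in \spec(\mathfrak{L}^\omeg(X))$.

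Applying Theorem \ref{th 1.3.1} to the right-hand side then yields the desired equivalence: $2 \in \spec(\mathfrak{L}^\omeg(X))$ iff $X$ is a positive bipartite graph, which gives the corollary. There is really no obstacle here; the only thing to be careful about is making the spectral shift explicit so that the reader sees why $-1$ for $R^\omeg$ corresponds exactly to $2$ for $\mathfrak{L}^\omeg$, after which the cited theorem closes the proof in one step.
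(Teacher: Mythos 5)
Your proposal is correct and is exactly the paper's argument: the paper derives the corollary from Theorem~\ref{th 1.3.1} via the identity $R^\omeg(X)=I-\mathfrak{L}^\omeg(X)$, which shifts the eigenvalue $2$ of $\mathfrak{L}^\omeg(X)$ to the eigenvalue $-1$ of $R^\omeg(X)$. You have simply made the spectral translation explicit, which the paper leaves as a one-line remark.
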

Note that if $X$ is a bipartite mixed graph, then the spectrum of $R^\omeg(X)$ is symmetric about $0$. As a result, if $X$ is a bipartite mixed connected graph, then $1$ is an eigenvalue of $R^\omeg(X)$ if and only if $X$ is a positive.
%%%%%%%%%%%%%%%%%%%%%%%%%%%%%%%%%%%%%%%%%%%%%%%%%%%%%%%%%%%%%%%%%%%%%%
%%%%%%%%%%%%%%%%%%%%%%%%%%%%%%%%%%%%%%%%%%%%%%%%%%%%%%%%%%%%%%%%%%%%%
\section{Determinant and Characteristic Polynomial of $R^\omeg(X)$}
In this section, we provide some results similar to Theorem 2.7 in \cite{ll} and Proposition 7.3 in \cite{b2} for Hermitian Randi\'c matrix of second kind. Lu et al. in \cite{lwz} defined the Hermitian-Randi\'c matrix $R_H(X)$ of a mixed graph $X$. For this Hermitian matrix, they obtained the determinant and characteristic polynomial. In \cite{lwz}, $\de R_H(X)$ is expressed as a summation, in which the summation is taken over a specific class of elementary spanning sub-graphs of $X$. In the next theorem, we find an analogous expression for $\de R^\omeg(X)$, in which the summation is taken over all spanning elementary sub-graphs of $X$. 

%The following result was obtained in \cite{lwz}.

%
%%%%%%%%%%%%%%%%%%%%%%%%%%%%%%
%\begin{theorem}[\cite{lwz}]
%Let $R_H(X)$ be the Hermitian Randi\'c matrix of a mixed graph $X$ of order $n$. Then
%$$\de (R_H(X))=\sum\limits_{X'}(-1)^{r(X')+l_n(X')}2^{s(X')}Q(X'),$$
%where the summation is over all real spanning elementary sub-graphs $X'$ of $X$. 
%\end{theorem}
%%%%%%%%%%%%%%%%%%%%%%%%

Let $X'$ be an elementary sub-graph of a mixed graph $X$ on $n$ vertices. Let $c(X')$ be the number of components of $X'$, and $r(X')=n-c(X')$. Further, let $s(X')$ be the number of cycles of length at least 3 in $X'$. For a sub-graph $Y$ of $X$, let $Q(Y)=\prod_{v_i\in V(Y)}\frac{1}{d_i}$.

Recall that a cycle is called positive or negative according as its value is positive or negative, respectively. A cycle $C$ is called \emph{semi-positive} if its value is either $\omega Q(C)$ or $\overline{\omega} Q(C)$. Similarly, it is called \emph{semi-negative} if its value is either $-\omega Q(C)$ or $-\overline{\omega} Q(C)$. Let $l_p(X'),l_n(X'), l_{sp}(X')$ and $l_{sn}(X')$ be the number of positive, negative, semi-positive and  semi-negative cycles in $X'$, respectively.

%%%%%%%%%%%%%%%%%%%%%%%%%%%%%%%
\begin{theorem}\label{th 1.4.0}
Let $R^\omeg(X)$ be the Hermitian Randi\'c matrix of second kind of a mixed graph $X$ of order $n$. Then
$$\de (R^\omeg(X))=\sum\limits_{X'}(-1)^{r(X')+l_n(X')+l_{sn}(X')}2^{l_n(X')+l_p(X')}Q(X'),$$
where the summation is over all spanning elementary sub-graphs $X'$ of $X$.
\end{theorem}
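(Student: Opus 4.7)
The plan is to apply the Leibniz expansion
$$
\de(R^\omeg(X))=\sum_{\sigma\in S_n}\sgn(\sigma)\prod_{i=1}^n R^\omeg_{i,\sigma(i)}
$$
and organize the nonzero terms by spanning elementary subgraphs of $X$. Because the diagonal of $R^\omeg(X)$ is zero, only fixed-point-free $\sigma$ survive, and each such $\sigma$ decomposes into cycles of length at least $2$. Every $2$-cycle $(i\;j)$ of $\sigma$ corresponds to an edge component $e_{ij}$ of $X_U$, while every cycle $(i_1\;i_2\;\cdots\;i_k)$ with $k\ge 3$ corresponds to a cycle of $X_U$ together with one of its two traversals. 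Grouping the two traversals for each long cycle converts the Leibniz sum into a sum indexed by spanning elementary subgraphs $X'$ of $X$.

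Next I would compute the contribution of a fixed $X'$ componentwise. An edge component with endpoints $v_i,v_j$ has permutation sign $-1$ and numerical value $R^\omeg_{ij}R^\omeg_{ji}=|h_{ij}|^2/(d_id_j)=1/(d_id_j)$, independently of orientation. A cycle component $C=v_{i_1}\cdots v_{i_k}$ contributes $\sgn=(-1)^{k-1}$ per traversal; factoring $1/\sqrt{d_{i_j}d_{i_{j+1}}}$ out of every entry gives $h(C)Q(C)$ for the forward traversal and $\overline{h(C)}Q(C)$ for the backward one (by Hermiticity of $R^\omeg$), whose sum is $2\operatorname{Re}(h(C))\,Q(C)$. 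Using $\omega+\overline{\omega}=1$, this real coefficient equals $2,-2,1,-1$ for $C$ positive, negative, semi-positive, or semi-negative, respectively, so aggregated over all cycle components it produces the factor $2^{l_p(X')+l_n(X')}(-1)^{l_n(X')+l_{sn}(X')}$.

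Finally, the product of edge factors $\prod 1/(d_id_j)$ combined with the cycle factors $\prod_C Q(C)$ telescopes into $Q(X')$ because $X'$ spans $V(X)$. The overall permutation sign works out to
$$
(-1)^{l_e(X')}\prod_{C}(-1)^{|C|-1}=(-1)^{l_e(X')+\sum_C(|C|-1)}=(-1)^{n-c(X')}=(-1)^{r(X')},
$$
using $2l_e(X')+\sum_C|C|=n$ and $c(X')=l_e(X')+l_p(X')+l_n(X')+l_{sp}(X')+l_{sn}(X')$. Assembling these three ingredients yields exactly the summand $(-1)^{r(X')+l_n(X')+l_{sn}(X')}2^{l_p(X')+l_n(X')}Q(X')$ for each spanning elementary $X'$, and summing over all such $X'$ completes the proof.

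The main obstacle I foresee is careful sign bookkeeping: tracking $(-1)^{k-1}$ from each cycle of $\sigma$, pairing the two traversals of each long cycle so that Hermiticity turns their complex contributions into a real number, and then reading off the four cases of $2\operatorname{Re}(h(C))$ from the arithmetic of $\omega=(1+\i\sqrt{3})/2$. In particular, the nontrivial identity $\omega+\overline{\omega}=1$ is what forces semi-positive and semi-negative cycles to contribute $\pm1$ rather than $\pm2$, which is precisely why the exponent on $2$ in the final formula is $l_p(X')+l_n(X')$ rather than the total number of cycles in $X'$.
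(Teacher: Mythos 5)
Your proposal is correct and follows essentially the same route as the paper: expand the determinant over permutations, discard terms with fixed points, group the surviving permutations by their spanning elementary subgraph, pair the two traversals of each long cycle so that Hermiticity yields $2\operatorname{Re}$ of the cycle value, and use $\omega+\overline{\omega}=1$ to read off the factors $2,-2,1,-1$ in the four cycle cases. Your explicit derivation of the overall sign $(-1)^{r(X')}$ from the cycle type is a slightly more detailed version of the paper's one-line assertion, but the argument is the same.
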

\begin{proof}
Let $X$ be a mixed graph of order $n$. We have
$$\de (R^\omeg(X))=\sum\limits_{\pi\in S_n}\sgn (\pi)R^\omeg_{1\pi(1)}R^\omeg_{2\pi(2)}\cdots R^\omeg_{n\pi(n)},$$
where $S_n$ is the set of all permutations on $\{1,\ldots,n\}$.

Consider a term $\sgn(\pi)R^\omeg_{1\pi(1)}\cdots R^\omeg_{n\pi(n)}$ in the expansion of $\de (R^\omeg(X))$. If $v_kv_{\pi(k)}$ is not an edge of $X$, then $R^\omeg_{k\pi(k)}=0$, hence the term vanishes. Thus, if the term corresponding to a permutation is non-zero, then it is fixed- point-free and can be expressed uniquely as the composition of disjoint cycles of length at least $2$. Consequently, each non-vanishing term in the expansion of $\de (R^\omeg(X))$ gives rise to a spanning elementary sub-graph $X'$ of $X$. Note that a spanning elementary sub-graph may correspond to several non-vanishing terms in the expansion of $\de (R^\omeg(X))$.

Let $X'$ be a spanning elementary sub-graph of $X$ that corresponds to a non-vanishing term in the expansion of $\de (R^\omeg(X))$. Let $\pi(X')$ be the set of all permutations that correspond to $X'$. Clearly, $|\pi(X')|=2^{s(X')}$, and $\sgn(\pi)=(-1)^{r(X')}$ for $\pi\in \pi(X')$. Thus
$$\de R^\omeg(X)=\sum_{X'} (-1)^{r(X')}\sum_{\pi\in \pi(X')}R^\omeg_{1\pi(1)}R^\omeg_{2\pi(2)}\cdots R^\omeg_{n\pi(n)}.$$ 

Note that, for each edge component with vertices $v_k$ and $v_\ell$, the corresponding factor $R^\omeg_{k\ell}R^\omeg_{\ell k}$ has the value $\frac{1}{\sqrt{d_kd_\ell}}\cdot\frac{1}{\sqrt{d_\ell d_k}}=\frac{1}{{d_kd_\ell}}$ or $\frac{\omega}{\sqrt{d_kd_\ell}}\cdot\frac{\overline{\omega}}{\sqrt{d_\ell d_k}}=\frac{1}{{d_kd_\ell}}$. Furthermore, if for one direction the value of a mixed cycle is $\alpha$, then for the reversed direction its value is $\overline{\alpha}$, the conjugate of $\alpha$. Thus, in the summation of $\de (R^{\omeg}(X))$, we have two cases for the cycles having complex values. For a semi-positive cycle, say $C_1$, we have $\omega\prod_{v_j\in V(C_1)}\frac{1}{d_j}+\overline{\omega}\prod_{j\in V(C_1)}\frac{1}{d_j}=(\omega+\overline{\omega})\prod_{v_j\in V(C_1)}\frac{1}{d_j}=\prod_{v_j\in V(C_1)}\frac{1}{d_j}$, and for a semi-negative cycle, say $C_2$, we have $-\omega\prod_{v_j\in V(C_2)}\frac{1}{d_j}-\overline{\omega}\prod_{j\in V(C_2)}\frac{1}{d_j}=-(\omega+\overline{\omega})\prod_{v_j\in V(C_2)}\frac{1}{d_j}=-\prod_{v_j\in V(C_2)}\frac{1}{d_j}$. In addition, if a cycle $C$ has the real values $\prod_{v_j\in V(C)}\frac{1}{d_j}$ or $-\prod_{v_j\in V(C)}\frac{1}{d_j}$ for some direction, then it has the same value for the other direction of $C$ as well.

Let $X'_e$ be the sub-graph of $X'$ consisting of the edge components of $X'$. Recall that $Q(X'_e)=\prod_{v_i\in V(X'_e)} \frac{1}{d_i}$. As a convention, assume that $Q(X'_e)=1$ if $X'_e=\phi$. Let $C_1,\;C_2,\ldots, C_{s(X')}$ be the cycles of $X'$ of a length at least 3, and for $i\in \{1,\ldots,s(X')\}$ and $\alpha \in \{1,2\}$, define
$$
W^\alpha_i=\left\{
  \begin{array}{ll}
    W(C_i) & \text{ if   } \alpha=1 \\
    \overline{W(C_i)} & \text{ if  } \alpha=2.
  \end{array}
\right.
$$
%%%%%%%%%%%%%%%%%%%%%%%%%
Thus 
$$\de R^\omeg(X)=\sum_{X'} (-1)^{r(X')}\sum_{\alpha \in \{1,2\}}Q(X'_e)W^{\alpha}_1 W^{\alpha}_2\ldots W^{\alpha}_{s(X')} .$$
%%%%%%%%%%%%%%%%%%%%%%
Observe that for $1\leq k\leq s(X')$, we have 
\begin{align*}
&W^{\alpha}_{1}\ldots W^{\alpha}_{k-1}W^{1}_{k}W^{\alpha}_{k+1}\ldots W^{\alpha}_{s(X')}+W^{\alpha}_{1}\ldots W^{\alpha}_{k-1}W^{2}_{k}W^{\alpha}_{k+1}\ldots W^{\alpha}_{s(X')}\\
=& \left\{
  \begin{array}{ll}
    (-1)^{\beta_k} W^{\alpha}_{1}\ldots W^{\alpha}_{k-1}Q(C_k)W^{\alpha}_{k+1}\ldots W^{\alpha}_{s(X')} & \text{ if $W(C_k)$ is not real  }\\
    (-1)^{\beta_k} 2W^{\alpha}_{1}\ldots W^{\alpha}_{k-1}Q(C_k)W^{\alpha}_{k+1}\ldots W^{\alpha}_{s(X')} & \text{ if $W(C_k)$ is real },
  \end{array}
\right.
\end{align*}
where 
$$
\beta_k=\left\{
  \begin{array}{ll}
    1 & \text{ if $C_k$ is negative or semi-negative cycle} \\
    0 & \text{ otherwise.}
  \end{array}
\right.
$$
%%%%%%%%%%%%%%%%%%%%%%%%%%%%
Thus
\begin{align*}
&\sum_{\alpha \in \{1,2\}}Q(X'_e)W^{\alpha}_1 W^{\alpha}_2\ldots W^{\alpha}_{s(X')}\\
=&(-1)^{l_n(X')+l_{sn}(X')}2^{s(X')-l_{sn}(X')-l_{sp}(X')}Q(X'_e)Q(C_1)\ldots Q(C_{s(X')})\\
=&(-1)^{l_n(X')+l_{sn}(X')}2^{l_{n}(X')+l_{p}(X')}Q(X').\;\; \qedhere
\end{align*} 
\end{proof}
%%%%%%%%%%%%%%%%%%%%%%%%%%%%%%%%%%%%%%%%%%%%%%%%%%%%%%%%%%%%%%%%%%%%%%%%%%%%%%%%%%%%%%%%%%%%%%%%%%%%%%%%%%%%%%%%%%%%%%%%%%%%%%%%

Let $P_{R^\omeg}(X,x)\colon = \de(xI-R^\omeg(X))$ be the characteristic polynomial of the matrix $R^\omeg(X)$ of a mixed graph $X$. Now we compute an expression for the coefficients of $P_{R^\omeg}(X,x)$.
%We call $P_{R^\omeg}(X,x)$ as the $R^\omeg$-characteristic polynomial of $X$. 
\begin{theorem}
 If $P_{R^\omeg}(X,x)\coloneqq x^n+a_1x^{n-1}+\cdots+a_n$, then
$$(-1)^ka_k=\sum\limits_{X'}(-1)^{r(X')+l_n(X')+l_{sn}(X')}2^{l_n(X')+l_p(X')}Q(X'),$$
where the summation is over all elementary sub-graphs $X'$ with order $k$ of $X$.
\end{theorem}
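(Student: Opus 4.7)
The plan is to reduce to Theorem \ref{th 1.4.0} via the standard principal-minor expansion of the characteristic polynomial. For a Hermitian matrix $M$ of order $n$, the coefficient of $x^{n-k}$ in $\det(xI - M)$ equals $(-1)^k$ times the sum of all $k\times k$ principal minors of $M$. Applying this to $M = R^\omeg(X)$ gives
\begin{equation*}
a_k \;=\; (-1)^k \sum_{\substack{S\subseteq V(X)\\|S|=k}} \det\bigl(R^\omeg(X)[S]\bigr),
\end{equation*}
where $R^\omeg(X)[S]$ denotes the principal submatrix indexed by $S$. So the task reduces to evaluating each such principal minor in the same combinatorial form.

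Next I would observe that the principal submatrix $R^\omeg(X)[S]$ has entries of exactly the same shape as a Hermitian Randi\'c matrix of second kind, with one caveat: the normalizing factor $1/\sqrt{d_id_j}$ uses the degrees of $v_i,v_j$ in the ambient graph $X$, not in the induced subgraph $X[S]$. Inspecting the proof of Theorem \ref{th 1.4.0}, every step is a purely formal determinant computation that only uses (i) the pattern of non-zero entries, (ii) the identities $\omega\overline{\omega}=1$ and $\omega+\overline{\omega}=1$, and (iii) the multiplicative factor $\prod 1/d_i$ attached to each vertex of the elementary subgraph. None of these steps require the $d_i$ to be the degree inside the subgraph. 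Therefore the same proof gives
\begin{equation*}
\det\bigl(R^\omeg(X)[S]\bigr) \;=\; \sum_{X''}(-1)^{r(X'')+l_n(X'')+l_{sn}(X'')}\,2^{l_n(X'')+l_p(X'')}\,Q(X''),
\end{equation*}
where the sum ranges over all spanning elementary sub-graphs $X''$ of $X[S]$ (and $Q$ is understood with degrees from $X$, consistent with its definition).

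Finally, I would combine these two expressions. A spanning elementary sub-graph of $X[S]$ with $|S|=k$ is exactly an elementary sub-graph of $X$ of order $k$ whose vertex set is $S$. Hence the double summation over $|S|=k$ and then over spanning elementary sub-graphs of $X[S]$ collapses into a single summation over all elementary sub-graphs $X'$ of $X$ with $|V(X')|=k$, yielding
\begin{equation*}
(-1)^k a_k \;=\; \sum_{X'}(-1)^{r(X')+l_n(X')+l_{sn}(X')}\,2^{l_n(X')+l_p(X')}\,Q(X'),
\end{equation*}
which is the claim.

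The only subtle point — and the one I would be careful to articulate explicitly — is the reuse of Theorem \ref{th 1.4.0} on a principal submatrix whose entries are not literally $R^\omeg$ of the standalone induced subgraph (the $d_i$ come from $X$, not $X[S]$). Since the combinatorial argument in Theorem \ref{th 1.4.0} is entry-by-entry and never invokes a relation such as $R^\omeg(Y)=I-\mathfrak{L}^\omeg(Y)$ that would tie the entries to the internal degrees of $Y$, this transfer goes through with no modification; the rest of the argument is bookkeeping.
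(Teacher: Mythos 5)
Your proposal is correct and follows essentially the same route as the paper: expand $(-1)^k a_k$ as the sum of all $k\times k$ principal minors of $R^\omeg(X)$ and evaluate each minor by the argument of Theorem~\ref{th 1.4.0}. The paper states this in one line; your additional care about the fact that the entries of $R^\omeg(X)[S]$ carry degrees from the ambient graph $X$ rather than from $X[S]$ (and that the combinatorial determinant expansion is indifferent to this) is a worthwhile clarification, not a deviation.
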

\begin{proof}
The proof is based on Theorem \ref{th 1.4.0}, and makes use of the fact that the summation of the determinants of all principal $k\times k$ sub-matrices of $R^\omeg(X)$ is $(-1)^ka_k$.
\end{proof}
%%%%%%%%%%%%%%%%%%%%%%%%%%%%%%%%%%%%%%%%%%%%%%%%%%%%%%%%%%%%%%%%%%%%%%%%%%%%%%%%%%%%%%%%%%%%%%%%%%%%%%%
In the next corollary, we look at how the coefficients of $P_{R^\omeg}(X,x)$ change their shape for different graph structures.
%%%%%%%%%%%%%%%%%%%%%%%%%%%%%%
\begin{corollary}\label{c 1.4.1}
Let $P_{R^\omeg}(X,x)\coloneqq x^n+a_1x^{n-1}+\cdots+a_n$.
\begin{enumerate}[(i)]
\item If $X$ is a tree, then $(-1)^ka_k=\sum\limits_{X'}(-1)^{r(X')}Q(X')$, where the summation is over all elementary sub-graphs $X'$ with order $k$ of $X$.
\item If the underlying graph $X_U$ of $X$ is $\delta$-regular ($\delta\neq 0$), then
    $$(-1)^ka_k=\sum\limits_{X'}(-1)^{r(X')+l_n(X')+l_{sn}(X')}2^{l_n(X')+l_p(X')}\frac{1}{\delta^k},$$
where the summation is over all elementary sub-graphs $X'$ with order $k$ of $X$ .
\end{enumerate}
\end{corollary}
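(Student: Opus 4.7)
The plan is to view both parts of the corollary as direct specializations of the preceding theorem, obtained by observing how the invariants $Q(X')$, $l_p(X')$, $l_n(X')$, $l_{sp}(X')$, $l_{sn}(X')$ and $r(X')$ collapse under the structural hypotheses. No new machinery should be needed; the work is purely bookkeeping on the general formula
\[
(-1)^k a_k = \sum_{X'} (-1)^{r(X')+l_n(X')+l_{sn}(X')} 2^{l_n(X')+l_p(X')} Q(X'),
\]
where the sum ranges over elementary sub-graphs of $X$ on $k$ vertices.

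For part (i), I would begin by recalling that every cycle in a mixed graph is defined to be a cycle in its underlying graph. Since $X$ is a tree, $X_U$ contains no cycle of length $\ge 3$, so every elementary sub-graph $X'$ of $X$ is a disjoint union of edges. Consequently $l_p(X') = l_n(X') = l_{sp}(X') = l_{sn}(X') = 0$ for each such $X'$, and the exponents $(-1)^{l_n(X')+l_{sn}(X')}$ and $2^{l_n(X')+l_p(X')}$ reduce to $1$. Substituting into the general formula yields precisely $(-1)^k a_k = \sum_{X'} (-1)^{r(X')} Q(X')$.

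For part (ii), I would use that $\delta$-regularity of $X_U$ means $d_i = \delta$ for every vertex $v_i$. Any elementary sub-graph $X'$ of order $k$ is a spanning sub-graph of the vertex set $V(X')$ with $|V(X')| = k$, so
\[
Q(X') \;=\; \prod_{v_i \in V(X')} \frac{1}{d_i} \;=\; \frac{1}{\delta^{k}}.
\]
Plugging this constant value of $Q(X')$ into the general expression from the theorem gives exactly the stated formula; the cycle-parity factors $(-1)^{l_n(X')+l_{sn}(X')}$ and $2^{l_n(X')+l_p(X')}$ are retained unchanged since regularity does not force any constraint on the sign structure of the cycles.

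There is essentially no obstacle here: both statements are corollaries in the literal sense, and the only subtlety is making sure that ``elementary sub-graph of order $k$'' is read as ``on $k$ vertices'', so that $Q(X')$ in the regular case is $\delta^{-k}$ rather than depending on the number of edges or components. I would include a single sentence to flag this convention, and then the two substitutions complete the proof.
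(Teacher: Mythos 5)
Your proposal is correct and follows exactly the paper's own (one-line) justification: the cycle counts vanish for a tree, and $\delta$-regularity forces $Q(X')=\delta^{-k}$ for an elementary sub-graph on $k$ vertices. The only difference is that you spell out the bookkeeping that the paper leaves implicit.
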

The proof of Corollary \ref{c 1.4.1} is straightforward due to the absence of cycles in a tree and the fact that every vertex in a $\delta$-regular graph has degree $\delta$.
%%%%%%%%%%%%%%%%%%%%%%%%%%%%%%%%%%%%%%%%%%%%%%%%%%%%%%%%%%%%%%%%%%%%%%%%%%%%%%%%

%The double factorial $n!!$ of a positive integer $n$ is defined by
%$$n!!=\left\{
%        \begin{array}{ll}
%          n(n-2)\cdots 5 3 1 & \hbox{if $n$ is odd} \\
%          n(n-2)\cdots 4 2 & \hbox{if $n$ even.}
%        \end{array}
%      \right.
%$$
%
%Note that if $X$ is a complete graph of order $2n$, then the number of perfect matchings of $X$ is $(2n-1)!!$.
%
%%%%%%%%%%%%%%%%%%%%%%%%%%%%%%%%%%%%%%%%%%%%%%%5
%\begin{theorem}
%If $X$ is a mixed complete graph of order $2n$, then rank of $R^\omeg(X)$ is $2n$.
%\end{theorem}
%\begin{proof}
%For a complete graph with $2n$ vertices, there are $(2n-1)!!$ perfect matchings. Suppose $X_1$ is the set of all such perfect matchings of $X$ and $X_2$ is the set of all spanning elementary sub-graphs which contains at least one cycle. Thus $X_1\cup X_2$ is the set of all spanning elementary sub-graphs of $X$. We have
%\begin{align*}
%\de (R^\omeg(X))=&\sum\limits_{X'=X_1\cup X_2}(-1)^{r(X')+l_n(X')+l_{sn}(X')}2^{l_n(X')+l_p(X')}\prod_{v_i\in V(X')}\frac{1}{d_i}\\
%=&\sum\limits_{X'\in X_1}(-1)^{r(X')+l_n(X')+l_{sn}(X')}2^{l_n(X')+l_p(X')}\prod_{v_i\in V(X')}\frac{1}{d_i}\\
%&+\sum\limits_{X'\in X_2}(-1)^{r(X')+l_n(X')+l_{sn}(X')}2^{l_n(X')+l_p(X')}\prod_{v_i\in V(X')}\frac{1}{d_i}\\
%=&(-1)^n\left((2n-1)!!+2\sum\limits_{X'\in X_2}(-1)^{l_n(X')+l_{sn}(X')}2^{l_n(X')+l_p(X')-1}\right)\left(\frac{1}{2n-1}\right)^{2n}.
%\end{align*}
%As (2n-1)!! is an odd number, So $\de ((R^\omeg))\neq 0$. This completes the proof.
%\end{proof}
%%%%%%%%%%%%%%%%%%%%%%%%%%%%%%%%%%%%%%%%%%%%%%
In 1999, Bollob\'as et al. \cite{bol} defined the general Randi\'c index $R^{(\alpha)}(X)$ of an un-oriented graph $X$ as $$R^{(\alpha)}(X)=\sum_{u\sim v}(d_u d_v)^\alpha.$$
%$$(R^{(\alpha)}(X))_{uv}=\left\{
    %\begin{array}{ll}
     % (d_u d_v)^\alpha & \hbox{if the vertex $u$ is adjacent to the vertex $v$} \\
      %0, & \hbox{otherwise.}
    %\end{array}
  %\right.
%$$
%%%%%%%%%%%%%%%%%%%%%%%%
Now we find a bound for eigenvalues of $R^\omeg(X)$ in terms of $R^{(-1)}(X_U)$.
\begin{theorem}
If $\lambda_1$ is the smallest eigenvalue of $R^\omeg(X)$, then
$$\lambda^2_1\geq \frac{2R^{(-1)}(X_U)}{n(n-1)}.$$
\end{theorem}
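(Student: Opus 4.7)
The plan is to derive the bound from two elementary trace identities combined with a single inequality for non-negative reals. First, since $R^\omeg(X)$ has zero diagonal (mixed graphs have no loops), $\operatorname{trace}(R^\omeg(X)) = 0$, so $\sum_{i=1}^n \lambda_i = 0$. Second, because $R^\omeg(X)$ is Hermitian,
\[
\operatorname{trace}\bigl((R^\omeg(X))^2\bigr) \;=\; \sum_{i,j}|R^\omeg_{ij}|^2 \;=\; 2\sum_{e_{ij}\in E(X_U)}\frac{1}{d_id_j} \;=\; 2R^{(-1)}(X_U),
\]
so $\sum_{i=1}^n \lambda_i^2 = 2R^{(-1)}(X_U)$.

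The next step exploits that $\lambda_1$ is the smallest eigenvalue. I would set $x_i := \lambda_i - \lambda_1 \geq 0$ for $i = 2,\ldots,n$, so that $\sum_{i=2}^n x_i = \sum_{i=2}^n \lambda_i - (n-1)\lambda_1 = -\lambda_1 - (n-1)\lambda_1 = -n\lambda_1$. Because the $x_i$ are non-negative, expanding $(\sum x_i)^2$ shows that $\sum_{i=2}^n x_i^2 \leq \bigl(\sum_{i=2}^n x_i\bigr)^2 = n^2\lambda_1^2$ (the cross-terms $2\sum_{i<j}x_ix_j$ are non-negative).

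Finally, I would expand the left-hand side using the trace identities:
\[
\sum_{i=2}^n x_i^2 \;=\; \sum_{i=2}^n \lambda_i^2 - 2\lambda_1\sum_{i=2}^n \lambda_i + (n-1)\lambda_1^2 \;=\; \bigl(2R^{(-1)}(X_U)-\lambda_1^2\bigr) + 2\lambda_1^2 + (n-1)\lambda_1^2,
\]
which simplifies to $2R^{(-1)}(X_U) + n\lambda_1^2$. Combining with the bound $\sum_{i=2}^n x_i^2 \leq n^2\lambda_1^2$ gives $2R^{(-1)}(X_U) + n\lambda_1^2 \leq n^2\lambda_1^2$, i.e., $n(n-1)\lambda_1^2 \geq 2R^{(-1)}(X_U)$, which is the claim.

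The only real choice in the argument is to apply the inequality $\sum x_i^2 \leq (\sum x_i)^2$ rather than Cauchy--Schwarz $(\sum x_i)^2 \leq (n-1)\sum x_i^2$; the latter would yield an upper bound on $\lambda_1^2$ instead of the desired lower bound. Equality in the bound occurs precisely when exactly one of the $x_i$ is nonzero, i.e., $\lambda_2=\cdots=\lambda_{n-1}=\lambda_1$ and $\lambda_n=-(n-1)\lambda_1$, which acts as a sanity check for the correct direction of the estimate.
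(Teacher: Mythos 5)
Your proposal is correct and follows essentially the same route as the paper: both use $\trace(R^\omeg(X))=0$ and $\trace(R^\omeg(X)^2)=2R^{(-1)}(X_U)$, shift the eigenvalues by $\lambda_1$ to get non-negative quantities, and bound $\sum_i(\lambda_i-\lambda_1)^2$ by $\bigl(\sum_i(\lambda_i-\lambda_1)\bigr)^2=n^2\lambda_1^2$ via the non-negativity of the cross-terms. The only cosmetic difference is that you index the shifted terms from $i=2$ rather than $i=1$, which changes nothing since the $i=1$ term vanishes.
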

\begin{proof}
Let the eigenvalues $\lambda_1,\ldots,\lambda_n$ of $R^\omeg(X)$ satisfy $\lambda_1\leq\cdots\leq\lambda_k\leq\lambda_{k+1}\leq\cdots\leq\lambda_n$. We have
\begin{align*}
\sum_{i=1}^{n}{\lambda_i}^2=\text{trace}(R^\omeg(X)^2)=\sum_{i=1}^{n}\sum_{j=1}^{n}R^\omeg_{ij}R^\omeg_{ji}=&\sum_{i=1}^{n}\sum_{j=1}^{n}R^\omeg_{ij}{\overline R^\omeg_{ij}}\\
=&\sum_{i=1}^{n}\sum_{j=1}^{n}|R^\omeg_{ij}|^2\\
=&2\sum_{i\thicksim j}\frac{1}{d_id_j} = 2R^{(-1)}(X_U).
\end{align*}

Also,
\begin{align*}
&\sum_{i=1}^{n}(\lambda_i-\lambda_1)=\sum_{i=1}^{n}{\lambda_i-n\lambda_1}=-n\lambda_1\\
\text{or,}  \;\;\;\;\;\;\;\;\;\ &\sum_{i=1}^{n}(\lambda_i-\lambda_1)^2+\sum_{p, q=1,\; p\neq q}^{n}(\lambda_p-\lambda_1)(\lambda_q-\lambda_1)=(n\lambda_1)^2.
\end{align*}
Since $\sum_{p\neq q}(\lambda_p-\lambda_1)(\lambda_q-\lambda_1)$ is non-negative, we have	
\begin{align*}
&\sum_{i=1}^{n}(\lambda_i-\lambda_1)^2\leq n^2\lambda^2_1\\
\text{or,} \;\;\;\;\;\;\;\;\;\ &\sum_{i=1}^{n}{\lambda^2_i}+n\lambda^2_1\leq n^2\lambda^2_1\\
\text{or,} \;\;\;\;\;\;\;\;\;\ &2R^{(-1)}(X_U)+n\lambda^2_1 \leq n^2\lambda^2_1\\
\text{or,} \;\;\;\;\;\;\;\;\;\ &\lambda^2_1\geq \frac{2R^{(-1)}(X_U)}{n(n-1)}. \qedhere
\end{align*}
\end{proof}
%%%%%%%%%%%%%%%%%%%%%%%%%%%%%%%%%%%%%%%%%%%---------------------------------------------------------------------------------------
For an $n\times n$ matrix $A\coloneqq (a_{ij})$, define
\begin{align*}
&\gamma_1(A)=\mi\bigg\{\frac{1}{n}\sum\limits_{i=1}^{n}\sum\limits_{j=1}^{n}a_{ij}\;,\;\frac{1}{n}\sum\limits_{i=1}^{n}a_{ii}- \frac{1}{n(n-1)}\sum\limits_{i\neq j}a_{ij}\bigg\} \;\;\;\;\;\text{and}\\
&\gamma_2(A)=\ma\bigg\{\frac{1}{n}\sum\limits_{i=1}^{n}\sum\limits_{j=1}^{n}a_{ij}\;,\;\frac{1}{n}\sum\limits_{i=1}^{n}a_{ii}- \frac{1}{n(n-1)}\sum\limits_{i\neq j}a_{ij}\bigg\}. 
\end{align*}
%%%%%%%%%%%%%%%%%%%%%%%%%%%%%%%%%%%%%%%
\begin{lemma}[\cite{m}]\label{l 1.4.1}
Let $A\coloneqq (a_{ij})$ be an $n\times n$ Hermitian matrix. Let $\lambda_1$ and $\lambda_n$ be the smallest and largest eigenvalues of $A$, respectively. Then $\lambda_1\leq\gamma_1(A)\leq\gamma_2(A)\leq\lambda_n$.
\end{lemma}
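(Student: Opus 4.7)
The plan is to prove both inequalities by exhibiting two Rayleigh quotients of $A$ whose values are precisely the two expressions inside $\gamma_1$ and $\gamma_2$; any Rayleigh quotient of a Hermitian matrix lies in $[\lambda_1,\lambda_n]$, and so will their minimum and maximum.

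For the first expression, I would take the vector $\x = \1$ (the all-ones vector). A direct computation gives $\x^* A \x = \sum_{i,j} a_{ij}$ and $\x^* \x = n$, so the Rayleigh quotient equals $\frac{1}{n}\sum_{i,j} a_{ij}$. Since $A$ is Hermitian, this scalar is real, and the Courant–Fischer characterization of $\lambda_1$ and $\lambda_n$ sandwiches it in $[\lambda_1,\lambda_n]$.

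For the second expression, my plan is to complete $\1/\sqrt{n}$ to an orthonormal basis $\{\1/\sqrt{n}, v_1,\ldots, v_{n-1}\}$ of $\mathbb{C}^n$, where $v_1,\ldots,v_{n-1}$ span the orthogonal complement $\1^{\perp}$. Each Rayleigh quotient $v_k^* A v_k$ lies in $[\lambda_1,\lambda_n]$, hence so does their average. To identify that average with the claimed expression, I will use the invariance of trace:
\[
\sum_{k=1}^{n-1} v_k^* A v_k \;=\; \trace(A) \;-\; \frac{1}{n}\1^* A \1 \;=\; \sum_{i=1}^n a_{ii} \;-\; \frac{1}{n}\sum_{i,j} a_{ij}.
\]
Splitting $\sum_{i,j} a_{ij} = \sum_i a_{ii} + \sum_{i\neq j} a_{ij}$ and dividing by $n-1$, the right-hand side simplifies to exactly $\frac{1}{n}\sum_i a_{ii} - \frac{1}{n(n-1)}\sum_{i\neq j} a_{ij}$, so this quantity lies in $[\lambda_1,\lambda_n]$ as well.

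Since both scalars appearing inside the $\mi$ and $\ma$ in the definitions of $\gamma_1(A)$ and $\gamma_2(A)$ belong to $[\lambda_1,\lambda_n]$, the same is true of their minimum $\gamma_1(A)$ and maximum $\gamma_2(A)$, and trivially $\gamma_1(A)\leq \gamma_2(A)$. The main thing to be careful about is the trace identity for the orthogonal complement of $\1$, i.e.\ verifying $\sum_k v_k^* A v_k = \trace(A) - \frac{1}{n}\1^* A \1$; this follows from expressing $\trace(A)$ as the sum of $\x^* A \x$ over any orthonormal basis $\{\x\}$, so no difficulty is expected. The only genuinely algebraic step is the clean identification of $\frac{1}{n-1}\sum_k v_k^* A v_k$ with the stated combination of diagonal and off-diagonal entries, which is a short rearrangement.
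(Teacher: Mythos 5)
The paper never proves this lemma: it is imported verbatim from Marsli \cite{m} and used as a black box, so there is no in-paper argument to compare yours against. Your blind proof is correct and self-contained, and it supplies exactly the standard argument behind Marsli's bound. Both scalars entering $\gamma_1(A)$ and $\gamma_2(A)$ are realized as (averages of) Rayleigh quotients of $A$: the first is $\frac{\1^*A\1}{\1^*\1}=\frac{1}{n}\sum_{i,j}a_{ij}$, and the second is $\frac{1}{n-1}\sum_{k=1}^{n-1}v_k^*Av_k$ for an orthonormal basis $v_1,\ldots,v_{n-1}$ of $\1^{\perp}$. Your trace identity is sound, since for the unitary matrix $U$ with columns $\1/\sqrt{n},v_1,\ldots,v_{n-1}$ one has $\sum_{k}v_k^*Av_k=\trace(U^*AU)-\frac{1}{n}\1^*A\1=\trace(A)-\frac{1}{n}\1^*A\1$, and the rearrangement $\frac{1}{n-1}\big(\sum_i a_{ii}-\frac{1}{n}\sum_{i,j}a_{ij}\big)=\frac{1}{n}\sum_i a_{ii}-\frac{1}{n(n-1)}\sum_{i\neq j}a_{ij}$ checks out. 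Since every Rayleigh quotient of a Hermitian matrix lies in the convex interval $[\lambda_1,\lambda_n]$, so does any average of such quotients, hence both scalars and therefore their minimum $\gamma_1(A)$ and maximum $\gamma_2(A)$ lie in $[\lambda_1,\lambda_n]$, with $\gamma_1(A)\leq\gamma_2(A)$ trivially. The only caveat worth recording is that the second expression requires $n\geq 2$ (the paper's standing assumption), and that its realness follows from $a_{ij}=\overline{a_{ji}}$; neither point causes any difficulty.
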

%%%%%%%%%%%%%%%%%%%%%%%%%%%%%%%%%%%%%%%%%%%%%%%%
\begin{theorem}\label{th 1.4.2}
Let $X$ be a mixed graph, and let $\lambda_1$ and $\lambda_n$ be the smallest and the largest eigenvalues of $R^\omeg(X)$, respectively. Then
 $$\lambda_1\leq-\frac{1}{n(n-1)}\left(\sum\limits_{i\sim j}\frac{2}{\sqrt{d_id_j}}+\sum\limits_{i\rightarrow j}\frac{1}{\sqrt{d_id_j}}\right)\leq \frac{1}{n}\left(\sum\limits_{i\sim j}\frac{2}{\sqrt{d_id_j}}+\sum\limits_{i\rightarrow j}\frac{1}{\sqrt{d_id_j}}\right)\leq \lambda_n.$$
\end{theorem}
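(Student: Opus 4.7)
The plan is to apply Lemma \ref{l 1.4.1} directly with $A = R^\omeg(X)$; the only work is to evaluate $\gamma_1(R^\omeg(X))$ and $\gamma_2(R^\omeg(X))$ explicitly and then observe that the two quantities appearing in the min/max fall on opposite sides of zero, so the min and max are forced.

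First I would record the diagonal sum. Since $X$ has no loops, $R^\omeg_{ii}=0$ for every $i$, hence $\sum_{i=1}^{n} a_{ii}=0$. Next I compute the total sum $\sum_{i=1}^{n}\sum_{j=1}^{n}R^\omeg_{ij}$ by grouping entries by the edge they come from. For an un-oriented edge $i\sim j$ we get $R^\omeg_{ij}+R^\omeg_{ji}=\frac{2}{\sqrt{d_id_j}}$, and for an oriented edge $i\rightarrow j$ we get $R^\omeg_{ij}+R^\omeg_{ji}=\frac{\omega+\overline{\omega}}{\sqrt{d_id_j}}=\frac{1}{\sqrt{d_id_j}}$, using the key identity $\omega+\overline{\omega}=1$ (since $\omega=\frac{1+\mathbf{i}\sqrt 3}{2}$ is a primitive sixth root of unity). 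Setting
$$S\;\coloneqq\;\sum_{i\sim j}\frac{2}{\sqrt{d_id_j}}+\sum_{i\rightarrow j}\frac{1}{\sqrt{d_id_j}},$$
I therefore obtain $\sum_{i,j} R^\omeg_{ij}=S$, and because the diagonal vanishes, also $\sum_{i\neq j} R^\omeg_{ij}=S$.

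Plugging these into the definitions of $\gamma_1$ and $\gamma_2$ gives
$$\gamma_1(R^\omeg(X))=\min\!\left\{\tfrac{S}{n},\,-\tfrac{S}{n(n-1)}\right\},\qquad \gamma_2(R^\omeg(X))=\max\!\left\{\tfrac{S}{n},\,-\tfrac{S}{n(n-1)}\right\}.$$
Since $S\geq 0$ (it is a sum of nonnegative terms), the minimum is attained by the negative term and the maximum by the positive one, so $\gamma_1=-\frac{S}{n(n-1)}$ and $\gamma_2=\frac{S}{n}$. Applying Lemma \ref{l 1.4.1} — which is valid because $R^\omeg(X)$ is Hermitian — yields
$$\lambda_1\;\leq\;-\tfrac{S}{n(n-1)}\;\leq\;\tfrac{S}{n}\;\leq\;\lambda_n,$$
which is precisely the claimed chain of inequalities.

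There is really no serious obstacle here: the argument is a bookkeeping exercise whose only nontrivial ingredient is $\omega+\overline{\omega}=1$, which makes the oriented-edge contribution real and of known sign. The small thing to be careful about is the convention in the notation $\sum_{i\sim j}$ and $\sum_{i\rightarrow j}$ (each un-oriented edge and each oriented edge counted once), so that the factor $2$ on the un-oriented edges and the factor $1$ on the oriented edges are correctly paired with the two off-diagonal positions $(i,j)$ and $(j,i)$ of $R^\omeg(X)$.
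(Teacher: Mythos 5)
Your proposal is correct and follows essentially the same route as the paper: compute the off-diagonal sum edge-by-edge using $\omega+\overline{\omega}=1$, note the trace is zero, and apply Lemma \ref{l 1.4.1} to identify $\gamma_1$ and $\gamma_2$. Your explicit remark that $S\geq 0$ forces which term is the minimum and which is the maximum is a small clarification the paper leaves implicit, but the argument is the same.
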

\begin{proof}
We have
\begin{align*}
\sum\limits_{i\neq j}R_{ij}^\omeg=\sum\limits_{i=1}^{n}\sum\limits_{j=1}^{n}R_{ij}^\omeg=&\sum\limits_{i\sim j}\frac{2}{\sqrt{d_id_j}}+\sum\limits_{i\rightarrow j}\bigg(\frac{\omega}{\sqrt{d_id_j}}+\frac{\overline{\omega}}{\sqrt{d_id_j}}\bigg)\\
=& \sum\limits_{i\sim j}\frac{2}{\sqrt{d_id_j}}+\sum\limits_{i\rightarrow j}\frac{1}{\sqrt{d_id_j}}.
\end{align*}

Also,
$$\sum\limits_{i=1}^{n}R_{ii}^\omeg=\text{trace}(R^\omeg)=0.$$

Hence by Lemma \ref{l 1.4.1}, we have
$$\lambda_1\leq\gamma_1\leq\gamma_2\leq\lambda_n, \;\; \text{where}$$
\begin{align*}
\gamma_1 &=\mi\bigg\{\frac{1}{n}\bigg(\sum\limits_{i\sim j}\frac{2}{\sqrt{d_id_j}}+\sum\limits_{i\rightarrow j}\frac{1}{\sqrt{d_id_j}}\bigg)\;,\; 0-\frac{1}{n(n-1)}\bigg(\sum\limits_{i\sim j}\frac{2}{\sqrt{d_id_j}}+\sum\limits_{i\rightarrow j}\frac{1}{\sqrt{d_id_j}}\bigg)\bigg\}\\
&=-\frac{1}{n(n-1)}\bigg(\sum\limits_{i\sim j}\frac{2}{\sqrt{d_id_j}}+\sum\limits_{i\rightarrow j}\frac{1}{\sqrt{d_id_j}}\bigg),\; \text{ and}\\
\gamma_2 &=\ma\bigg\{\frac{1}{n}\bigg(\sum\limits_{i\sim j}\frac{2}{\sqrt{d_id_j}}+\sum\limits_{i\rightarrow j}\frac{1}{\sqrt{d_id_j}}\bigg)\;,\; 0-\frac{1}{n(n-1)}\bigg(\sum\limits_{i\sim j}\frac{2}{\sqrt{d_id_j}}+\sum\limits_{i\rightarrow j}\frac{1}{\sqrt{d_id_j}}\bigg)\bigg\}\\
&=\frac{1}{n}\bigg(\sum\limits_{i\sim j}\frac{2}{\sqrt{d_id_j}}+\sum\limits_{i\rightarrow j}\frac{1}{\sqrt{d_id_j}}\bigg).
\end{align*}
%%%%%%%%%%%%%%%%%%
Hence
\[\lambda_1\leq\frac{-1}{n(n-1)}\bigg(\sum\limits_{i\sim j}\frac{2}{\sqrt{d_id_j}}+\sum\limits_{i\rightarrow j}\frac{1}{\sqrt{d_id_j}}\bigg)\leq\frac{1}{n}\bigg(\sum\limits_{i\sim j}\frac{2}{\sqrt{d_id_j}}+\sum\limits_{i\rightarrow j}\frac{1}{\sqrt{d_id_j}}\bigg)\leq\lambda_n.  \qedhere \]
\end{proof}
%%%%%%%%%%%%%%%%%%%%%%%%%%%%%%%%%

The following corollary follows easily from Theorem \ref{th 1.4.2}.
\begin{corollary}
Let $X$ be a mixed graph. If $\lambda_1$ and $\lambda_n$ are the smallest and the largest eigenvalues of $R^\omeg(X)$, then
 $$\lambda_n-\lambda_1\geq \frac{1}{n-1}\bigg(\sum\limits_{i\sim j}\frac{2}{\sqrt{d_id_j}}+\sum\limits_{i\rightarrow j}\frac{1}{\sqrt{d_id_j}}\bigg).$$
\end{corollary}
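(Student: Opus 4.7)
The plan is to derive this corollary as an immediate consequence of Theorem 4.2 (labelled \texttt{th 1.4.2} in the source), simply by subtracting the lower bound for $\lambda_1$ from the upper bound for $\lambda_n$ that appear there. Abbreviate
$$S \;\coloneqq\; \sum\limits_{i\sim j}\frac{2}{\sqrt{d_id_j}}+\sum\limits_{i\rightarrow j}\frac{1}{\sqrt{d_id_j}}.$$
The content of Theorem \ref{th 1.4.2} is precisely that $\lambda_1 \le -\frac{S}{n(n-1)}$ and $\lambda_n \ge \frac{S}{n}$.

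Subtracting these two inequalities, I would write
$$\lambda_n - \lambda_1 \;\ge\; \frac{S}{n} + \frac{S}{n(n-1)} \;=\; \frac{(n-1)S + S}{n(n-1)} \;=\; \frac{nS}{n(n-1)} \;=\; \frac{S}{n-1},$$
which is exactly the claimed bound after substituting back the definition of $S$.

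There is essentially no obstacle here: the corollary is a one-line algebraic manipulation once Theorem \ref{th 1.4.2} is in hand, and the only thing to verify is the arithmetic $\frac{1}{n}+\frac{1}{n(n-1)}=\frac{1}{n-1}$. The non-negativity of $S$ (which ensures the direction of the inequality is preserved when we add the two bounds) is automatic since every summand $\frac{1}{\sqrt{d_id_j}}$ is positive. Thus a proof of at most two display lines suffices, and I would not bother re-deriving Theorem \ref{th 1.4.2} or invoking Lemma \ref{l 1.4.1} again.
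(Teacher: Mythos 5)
Your proposal is correct and is exactly the paper's intended argument: the paper states only that the corollary ``follows easily from Theorem~\ref{th 1.4.2},'' and the easy step is precisely your subtraction $\lambda_n-\lambda_1\ge \frac{S}{n}+\frac{S}{n(n-1)}=\frac{S}{n-1}$. The arithmetic and the sign considerations are all in order, so nothing further is needed.
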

%%%%%%%%%%%%%%%%%%%%%%%%%%%%%%%%%%%%%%%%%%%%%%%%%%%%%%%%%%%%%%%%%%%%%%%%%%%%%%%%
\section{Energy of $R^{\omeg}(X)$}

Lu et al. in \cite{lwz} investigated the energy for Hermitian Randi\'c matrix $R_H(X)$ and computed various bounds. We analogously define the energy $\varepsilon(R^\omeg(X))$ of $R^\omeg(X)$. That is, $\varepsilon(R^\omeg(X))$ is the sum of the absolute values of the eigenvalues of $R^\omeg(X)$.  We find that most of the results on energy of $R_H(X)$ also hold good for the matrix $R^\omeg(X)$ due to the fact that $\trace(R^\omeg(X))=\trace(R_H(X))=0$ and $\sum_{i=1}^{n}{\lambda_i}^2=2R^{(-1)}(X_U)$. 

%%%%%%%%%%%%%%%%%%%%%%%%%
\begin{theorem}
Let $X$ be a mixed graph of order $n$, and $\lambda_1,\ldots,\lambda_n$ be the eigenvalues of $R^{\omeg}(X)$. Then
$$\sqrt{2R^{(-1)}(X_U)+n(n-1)(\de R^{\omeg}(X))^{2/n}}\leq \varepsilon(R^{\omeg}(X)) \leq \sqrt{2nR^{(-1)}(X_U)},$$
where equality holds if $|\lambda_1|=\cdots=|\lambda_n|$.
\end{theorem}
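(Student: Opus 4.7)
The key ingredients are the two identities already in hand from the proof of Theorem~4.3: since $R^\omeg(X)$ is Hermitian with zero diagonal,
\[
\sum_{i=1}^n \lambda_i = \trace(R^\omeg(X)) = 0, \qquad \sum_{i=1}^n \lambda_i^2 = \trace(R^\omeg(X)^2) = 2R^{(-1)}(X_U),
\]
and $\prod_{i=1}^n |\lambda_i| = |\de R^\omeg(X)|$. With these three facts, the bounds reduce to two standard inequalities applied to the multiset $\{|\lambda_1|,\ldots,|\lambda_n|\}$, exactly in the spirit of McClelland's and Koolen--Moulton's classical energy estimates.

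\textbf{Upper bound.} I would apply the Cauchy--Schwarz inequality to the vectors $(1,1,\ldots,1)$ and $(|\lambda_1|,\ldots,|\lambda_n|)$ in $\mathbb{R}^n$ to get
\[
\varepsilon(R^\omeg(X))^2 = \Bigl(\sum_{i=1}^n |\lambda_i|\Bigr)^{\!2} \leq n\sum_{i=1}^n \lambda_i^2 = 2nR^{(-1)}(X_U),
\]
which gives the right-hand inequality. Equality here requires $|\lambda_1|=\cdots=|\lambda_n|$.

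\textbf{Lower bound.} I would first expand
\[
\varepsilon(R^\omeg(X))^2 = \sum_{i=1}^n \lambda_i^2 + \sum_{\substack{i,j=1\\ i\neq j}}^n |\lambda_i||\lambda_j| = 2R^{(-1)}(X_U) + \sum_{\substack{i,j=1\\ i\neq j}}^n |\lambda_i||\lambda_j|.
\]
Then I would apply the AM--GM inequality to the $n(n-1)$ positive terms $|\lambda_i||\lambda_j|$ (with $i\neq j$), noting that each $|\lambda_k|$ appears exactly $2(n-1)$ times in the product:
\[
\frac{1}{n(n-1)}\sum_{i\neq j}|\lambda_i||\lambda_j| \geq \Bigl(\prod_{i\neq j}|\lambda_i||\lambda_j|\Bigr)^{\!1/(n(n-1))} = \Bigl(\prod_{i=1}^n |\lambda_i|^{2(n-1)}\Bigr)^{\!1/(n(n-1))} = \Bigl(\prod_{i=1}^n |\lambda_i|\Bigr)^{\!2/n}.
\]
Since $\prod_{i=1}^n |\lambda_i| = |\de R^\omeg(X)|$, combining the two displays yields
\[
\varepsilon(R^\omeg(X))^2 \geq 2R^{(-1)}(X_U) + n(n-1)\,|\de R^\omeg(X)|^{2/n},
\]
which is the stated left-hand inequality (interpreting $(\de R^\omeg(X))^{2/n}$ as $|\de R^\omeg(X)|^{2/n}$, which is unambiguous because the determinant of a Hermitian matrix is real). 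Equality in AM--GM again requires $|\lambda_1|=\cdots=|\lambda_n|$, so both equality statements collapse to the single condition given in the theorem.

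\textbf{Expected obstacle.} There is essentially no obstacle; the proof is a template application of Cauchy--Schwarz and AM--GM once the two trace identities are in place. The only minor point worth flagging is the interpretation of $(\de R^\omeg(X))^{2/n}$ when the determinant is negative: because $R^\omeg(X)$ is Hermitian the determinant is real, and the correct reading is $|\de R^\omeg(X)|^{2/n}$, which is what the AM--GM step naturally produces.
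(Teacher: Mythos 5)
Your proposal is correct and follows exactly the route the paper intends: the paper's proof simply defers to Theorem 3.5 of Lu et al.\ via ``the Cauchy--Schwartz inequality and geometric--arithmetic inequality,'' which is precisely the Cauchy--Schwarz upper bound and AM--GM lower bound you carry out in detail using $\trace(R^\omeg(X))=0$, $\sum_i\lambda_i^2=2R^{(-1)}(X_U)$, and $\prod_i|\lambda_i|=|\de R^\omeg(X)|$. Your exponent bookkeeping in the AM--GM step and your remark on reading $(\de R^\omeg(X))^{2/n}$ as $|\de R^\omeg(X)|^{2/n}$ are both correct.
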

\begin{proof}
The proof is similar to the proof of Theorem 3.5 in \cite{lwz}, that can be obtained using the Cauchy-Schwartz inequality and geometric-arithmetic inequality.
\end{proof}
%%%%%%%%%%%%%%%%%%%%%%%%%%%%%%%%%%%%%%%%%%%%%%%%%%%%%%%%%%%%%%%%%%%%%%%%%%%%%%%%%%%%%%%%%%%
\begin{theorem}
Let $X$ be a mixed graph, and $\lambda_1,\ldots,\lambda_n$  be the eigenvalues of $R^{\omeg}(X)$, where $\lambda_1\leq\ldots\leq\lambda_2\leq\ldots\leq\lambda_n$, and $k$ is the number of negative eigenvalues. Then $$\varepsilon(R^{\omeg}(X))\geq 2(n-k) \bigg(\frac{\de(R^\omeg(X))}{\prod_{i=1}^{k}\lambda_i }\bigg)^{\frac{1}{n-k}}.$$
\end{theorem}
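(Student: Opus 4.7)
The key observation is that the diagonal entries of $R^{\omeg}(X)$ are all zero, so $\trace(R^{\omeg}(X))=\sum_{i=1}^{n}\lambda_i=0$. This symmetry between the negative and non-negative eigenvalues is what converts the energy into a sum over only the non-negative part, and from there the bound becomes an essentially routine AM--GM application. My plan is to write $\varepsilon(R^{\omeg}(X))$ purely as a sum of non-negative eigenvalues, apply the geometric--arithmetic mean inequality to them, and then reinterpret the product via the determinant.

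First, I would split
\begin{equation*}
\varepsilon(R^{\omeg}(X))=\sum_{i=1}^{n}|\lambda_i|=-\sum_{i=1}^{k}\lambda_i+\sum_{i=k+1}^{n}\lambda_i.
\end{equation*}
The trace identity $\sum_{i=1}^{n}\lambda_i=0$ yields $-\sum_{i=1}^{k}\lambda_i=\sum_{i=k+1}^{n}\lambda_i$, hence
\begin{equation*}
\varepsilon(R^{\omeg}(X))=2\sum_{i=k+1}^{n}\lambda_i.
\end{equation*}
Second, since $\lambda_{k+1},\ldots,\lambda_n$ are non-negative, the AM--GM inequality gives
\begin{equation*}
\frac{1}{n-k}\sum_{i=k+1}^{n}\lambda_i\geq\Bigl(\prod_{i=k+1}^{n}\lambda_i\Bigr)^{\frac{1}{n-k}}.
\end{equation*}
Finally, using $\de R^{\omeg}(X)=\prod_{i=1}^{n}\lambda_i$, I would rewrite $\prod_{i=k+1}^{n}\lambda_i=\de R^{\omeg}(X)\big/\prod_{i=1}^{k}\lambda_i$, which yields the desired bound.

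There is really no main obstacle here beyond bookkeeping; the only thing worth checking is the sign/well-definedness of the ratio $\de R^{\omeg}(X)\big/\prod_{i=1}^{k}\lambda_i$. Since $\prod_{i=1}^{k}\lambda_i$ is a product of $k$ negative reals and $\de R^{\omeg}(X)$ equals the product of all $n$ eigenvalues, the quotient is exactly $\prod_{i=k+1}^{n}\lambda_i\geq 0$, so the $(n-k)$-th root is meaningful (interpreting the bound as trivially $0$ when some $\lambda_i=0$ for $i>k$). Degenerate cases $k=0$ or $k=n$ cannot occur together with a non-trivial $R^{\omeg}(X)$, because $\trace=0$ forces negative eigenvalues to be accompanied by positive ones unless the spectrum is identically zero, in which case the inequality reads $0\geq 0$.
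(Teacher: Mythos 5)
Your proposal is correct and follows essentially the same route as the paper: use $\trace(R^{\omeg}(X))=0$ to write the energy as twice the sum of the non-negative eigenvalues, apply the AM--GM inequality to $\lambda_{k+1},\ldots,\lambda_n$, and express their product as $\de(R^{\omeg}(X))\big/\prod_{i=1}^{k}\lambda_i$. The only cosmetic difference is that the paper routes through $|\lambda_1|+\cdots+|\lambda_k|\geq|\lambda_1+\cdots+\lambda_k|$ (which is in fact an equality), and your remarks on the sign of the quotient and the degenerate cases are a welcome addition the paper omits.
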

\begin{proof}
Given that the eigenvalues $\lambda_1,\ldots,\lambda_n$ of $R^{\omeg}(X)$ satisfy $\lambda_1\leq\ldots\leq\lambda_k\leq\lambda_{k+1}\leq\ldots\leq\lambda_n$. Also, $\lambda_1,\ldots,\lambda_k$ are negative and $\lambda_{k+1},\ldots,\lambda_n$ are positive. As $\text{trace}(R^\omeg(X))=0$, we have $\varepsilon(R^\omeg(X))=\sum\limits_{i=1}^{n}|\lambda_i|=2\sum\limits_{i=k+1}^{n} |\lambda_i|=2\sum\limits_{i=1}^{k} |\lambda_i|$. Now
\begin{align*}
|\lambda_1|+|\lambda_2|+\ldots+|\lambda_k|\geq&|\lambda_1+\lambda_2+\ldots+\lambda_k|\\
=&|\lambda_{k+1}+\ldots+\lambda_n|\\
\geq&(n-k)\left(\prod_{i=k+1}^{n}\lambda_i\right)^{\frac{1}{n-k}}\\
=&(n-k) \left(\frac{\de(R^\omeg(X))}{\prod_{i=1}^{k}\lambda_i}\right)^{\frac{1}{n-k}}.
\end{align*}
Hence
\[\varepsilon(R^\omeg(X))=2\sum\limits_{i=1}^{k} |\lambda_i| \geq 2(n-k)\left(\frac{\de(R^\omeg(X))}{\prod_{i=1}^{k}\lambda_i}\right)^{\frac{1}{n-k}}.  \qedhere \]
\end{proof}
%%%%%%%%%%%%%%%%%%%%%%%%%%%%%%%%%%%%%%
\begin{lemma}\label{lem 5.1}
If $x_1, x_2,\ldots, x_n$ are non-negative and $k\geq 2$, then $\sum^n_{i=1}x^k_i\leq \big(\sum^n_{i=1} x^2_i\big)^{k/2}$.
\end{lemma}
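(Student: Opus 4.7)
The plan is to reduce everything to the elementary observation that each individual $x_i$ is bounded above by $\sqrt{S}$, where $S := \sum_{j=1}^{n} x_j^2$, and then split the exponent $k$ as $2 + (k-2)$ to exploit this bound.

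First I would dispose of the degenerate case $S = 0$, in which every $x_i$ vanishes and both sides of the inequality are zero. So assume $S > 0$. For each index $i$, non-negativity of the $x_j$ gives $x_i^2 \leq \sum_{j=1}^n x_j^2 = S$, hence $x_i \leq S^{1/2}$. Since $k \geq 2$ implies $k - 2 \geq 0$ and $x_i \geq 0$, raising to the power $k-2$ preserves the inequality, yielding
\[
x_i^{k-2} \leq S^{(k-2)/2}.
\]

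Now multiply both sides by the non-negative quantity $x_i^2$ to get
\[
x_i^k = x_i^2 \cdot x_i^{k-2} \leq S^{(k-2)/2} \cdot x_i^2,
\]
and sum over $i$ from $1$ to $n$:
\[
\sum_{i=1}^{n} x_i^k \;\leq\; S^{(k-2)/2} \sum_{i=1}^{n} x_i^2 \;=\; S^{(k-2)/2} \cdot S \;=\; S^{k/2} \;=\; \Bigl(\sum_{i=1}^{n} x_i^2\Bigr)^{k/2},
\]
which is exactly the desired inequality.

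There is no real obstacle here; the only subtle point is making sure $k - 2 \geq 0$ (so that raising $x_i \leq S^{1/2}$ to the power $k-2$ is legitimate for possibly-zero $x_i$), which is guaranteed by the hypothesis $k \geq 2$. This is essentially the standard monotonicity $\|x\|_k \leq \|x\|_2$ of $\ell^p$-norms for $k \geq 2$, proved by hand rather than by invoking that fact.
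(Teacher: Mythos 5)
Your proof is correct. The paper does not write out a full argument for this lemma; it only remarks that the statement ``can be easily proved using the principle of mathematical induction and Cauchy--Schwartz inequality,'' i.e.\ an induction on the integer exponent $k$ in which each step from $k$ to $k+1$ is handled by an application of Cauchy--Schwarz. Your route is genuinely different: you avoid induction altogether by writing $x_i^k = x_i^2\cdot x_i^{k-2}$ and bounding the factor $x_i^{k-2}$ by $S^{(k-2)/2}$ with $S=\sum_j x_j^2$, which collapses the whole lemma to a single summation. This buys two things: the argument is shorter and self-contained (no inductive bookkeeping, no appeal to Cauchy--Schwarz), and it works verbatim for any real $k\geq 2$, whereas the induction sketched in the paper only makes sense for integer $k$. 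In the one place the lemma is used (the bound $\varepsilon(R^{\boldsymbol{\omega}}(X))< e^{\sqrt{2R^{(-1)}(X_U)}}$), $k$ runs over integers, so both versions suffice; but your handling of the degenerate case $S=0$ and of the exponent $k-2=0$ is careful and complete, and nothing is missing.
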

Lemma~\ref{lem 5.1} can be easily proved using the principle of mathematical induction and Cauchy-Schwartz inequality.
%%%%%%%%%%%%%%%%%%%%%%%%%%%%%%%%%%%%%%
\begin{theorem}
Let $X$ be a mixed graph. Then $\varepsilon(R^{\omeg}(X))< e^{\sqrt{2R^{(-1)}(X_U)}}.$
\end{theorem}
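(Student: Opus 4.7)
The plan is to apply Lemma~\ref{lem 5.1} to the absolute values of the eigenvalues of $R^\omeg(X)$ and then dominate $\varepsilon(R^\omeg(X))$ by the Taylor series of $e^{\sqrt{\sigma}}$, where $\sigma := 2R^{(-1)}(X_U)$.

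Let $\lambda_1,\dots,\lambda_n$ be the eigenvalues of $R^\omeg(X)$. As noted in the paragraph just before the theorem, $\sum_{i=1}^n\lambda_i^2 = \sigma$. Applying Lemma~\ref{lem 5.1} to the non-negative reals $|\lambda_1|,\dots,|\lambda_n|$ gives
$$\sum_{i=1}^{n}|\lambda_i|^k \;\le\; \sigma^{k/2} \qquad (k\ge 2). \qquad (\ast)$$
I would then use the elementary inequality $x\le e^x-1$ (valid for $x\ge 0$ and strict for $x>0$) and expand via the Taylor series of $e^{|\lambda_i|}$:
$$\varepsilon(R^\omeg(X)) \;=\; \sum_{i=1}^{n}|\lambda_i| \;\le\; \sum_{i=1}^{n}\bigl(e^{|\lambda_i|}-1\bigr) \;=\; \sum_{k=1}^{\infty}\frac{1}{k!}\sum_{i=1}^{n}|\lambda_i|^k.$$
The aim is to dominate the right-hand side, by using $(\ast)$ term-by-term for $k\ge 2$, by the series $\sum_{k\ge 1}\sigma^{k/2}/k!=e^{\sqrt{\sigma}}-1$, and hence conclude $\varepsilon(R^\omeg(X))\le e^{\sqrt{\sigma}}-1<e^{\sqrt{\sigma}}$.

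The main obstacle is the $k=1$ term, which is $\varepsilon(R^\omeg(X))$ itself: Lemma~\ref{lem 5.1} does not cover $k=1$, and in general $\sum_i|\lambda_i|\le\sqrt{\sigma}$ can fail (for instance $K_2$ gives $\varepsilon=2>\sqrt{2}$). To close this gap I would separate out the $k=1$ contribution and combine the upper bound
$$\sum_{i=1}^n e^{|\lambda_i|} \;\le\; n+\varepsilon(R^\omeg(X))+\bigl(e^{\sqrt{\sigma}}-1-\sqrt{\sigma}\bigr),$$
which is what $(\ast)$ yields after adding the $k=0,1$ terms, with a suitable \emph{lower} bound for $\sum_i e^{|\lambda_i|}$. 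Two sources of strength that the proof should exploit are the bound $|\lambda_i|\le 1$ for every $i$ (Theorem~3.5) and the trace identity $\trace(R^\omeg(X))=0$, i.e.\ $\sum_i\lambda_i=0$, which symmetrically splits the spectrum into equal positive and negative parts. The delicate bookkeeping is to extract a lower bound on $\sum_i e^{|\lambda_i|}$ (for example via the secant estimate $e^{|\lambda_i|}\ge 1+(e-1)|\lambda_i|$ on $[0,1]$ together with convexity of $e^x$) strong enough to force the strict inequality $\varepsilon(R^\omeg(X))<e^{\sqrt{\sigma}}$ after cancelling the $n+\varepsilon$ terms that appear on both sides. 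This reconciliation of upper and lower bounds on $\sum_i e^{|\lambda_i|}$ is, in my view, where the real content of the proof lies.
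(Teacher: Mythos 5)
You have not produced a proof: the argument stops precisely where, by your own admission, ``the real content of the proof lies.'' Concretely, after discarding the $k=0$ term via $x\le e^{x}-1$ and bounding the $k\ge 2$ terms by Lemma~\ref{lem 5.1}, what you actually obtain is
$$\varepsilon(R^{\omeg}(X))\;\le\;\sum_{i=1}^{n}\bigl(e^{|\lambda_i|}-1\bigr)\;\le\;\varepsilon(R^{\omeg}(X))+\Bigl(e^{\sqrt{2R^{(-1)}(X_U)}}-1-\sqrt{2R^{(-1)}(X_U)}\Bigr),$$
in which the quantity to be bounded appears on both sides with coefficient $1$; subtracting it leaves $0\le e^{\sqrt{2R^{(-1)}(X_U)}}-1-\sqrt{2R^{(-1)}(X_U)}$, which is true but says nothing about $\varepsilon(R^{\omeg}(X))$. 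The auxiliary lower bound on $\sum_i e^{|\lambda_i|}$ that is supposed to break this circularity is never exhibited, and it is not clear that the ingredients you list ($|\lambda_i|\le 1$ and $\trace R^{\omeg}(X)=0$) suffice to produce one. So the proposal is a correct diagnosis of a difficulty, not a proof.

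That diagnosis, however, is accurate, and it applies verbatim to the paper's own argument. The published proof is exactly the chain
$$\varepsilon(R^{\omeg}(X))<\sum_{i=1}^{n}e^{|\lambda_i|}=\sum_{k\ge 0}\frac{1}{k!}\sum_{i=1}^{n}|\lambda_i|^{k}\le\sum_{k\ge 0}\frac{1}{k!}\Bigl(\sum_{i=1}^{n}|\lambda_i|^{2}\Bigr)^{k/2}=e^{\sqrt{2R^{(-1)}(X_U)}},$$
with Lemma~\ref{lem 5.1} invoked for every $k\ge 0$. But the lemma is stated (and is only true) for $k\ge 2$: for $k=0$ the claimed termwise inequality reads $n\le 1$, and for $k=1$ it reads $\sum_i|\lambda_i|\le\bigl(\sum_i|\lambda_i|^{2}\bigr)^{1/2}$, which already fails for $K_2$, where the left side is $2$ and the right side is $\sqrt{2}$. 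Hence the paper's proof is not valid as written either. The theorem itself may well be true, but both your route and the paper's require the $k=0$ and $k=1$ terms of the exponential series to be handled by an additional idea that neither text supplies.
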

\begin{proof}
Let the eigenvalues  of $R^{\omeg}(X)$ be $\lambda_1,\ldots,\lambda_n$. We have $\sum_{i=1}^{n}{\lambda_i}^2=2R^{(-1)}(X_U)$. Now
\begin{align*}
\varepsilon(R^\omeg(X))=\sum_{i=1}^{n}|\lambda_i| &< \sum_{i=1}^{n} e^{|\lambda_i|}\\
&=\sum_{i=1}^{n}\sum_{k\geq 0}\frac{|\lambda_i|^k}{k!}\\
&\leq\sum_{k\geq 0}\frac{1}{k!}\left(\sum_{i=1}^{n}{|\lambda_i|^2}\right)^{k/2},
\;\; \text{using Lemma~\ref{lem 5.1}}\\
&\leq\sum_{k\geq 0}\frac{1}{k!}(2R^{(-1)}(X_U))^{k/2}\\
&=\sum_{k\geq 0}\frac{1}{k!}\left(\sqrt{2R^{(-1)}(X_U)}\right)^{k}=e^{\sqrt{2R^{(-1)}(X_U)}}. \qedhere
\end{align*}
\end{proof}
%%%%%%%%%%%%%%%%%%%%%%%%%%%%%%%%%%%%%%%%%%%%%%%%%%%%%%%%%%%%%%%%%%%%%%%%%%%%%%%%%%%%%%%%%%%%%%%
\begin{theorem}
Let $\lambda_1, \ldots,\lambda_n$ be the eigenvalues of $R^\omeg(X)$ and $\rho=\underset{i}{\ma}{|\lambda_i|}$. Then
$$\varepsilon(R^\omeg(X))\leq \frac{1}{2}\left(\rho(n-2)+\sqrt{\rho^2(n-2)^2+16R^{(-1)}(X_U)}\right).$$ \qedhere
\end{theorem}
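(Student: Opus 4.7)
The plan is to rewrite the claim as a quadratic inequality in $\varepsilon\coloneqq\varepsilon(R^{\omeg}(X))$. Observe that $\tfrac{1}{2}\bigl((n-2)\rho+\sqrt{(n-2)^{2}\rho^{2}+16R^{(-1)}(X_U)}\bigr)$ is precisely the positive root of $t^{2}-(n-2)\rho\,t-4R^{(-1)}(X_U)=0$, so by the quadratic formula it suffices to show
\[
\varepsilon^{2}\;\le\;(n-2)\rho\,\varepsilon+4R^{(-1)}(X_U).
\]

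I would assemble this from two ingredients already in play. First, the Cauchy-Schwartz bound $\varepsilon^{2}\le n\sum_{i}\lambda_{i}^{2}=2nR^{(-1)}(X_U)$---the upper half of Theorem~5.1---rearranges to $2\varepsilon^{2}/n\le 4R^{(-1)}(X_U)$. Second, since $|\lambda_i|\le\rho$ for every $i$, the triangle inequality yields $\varepsilon\le n\rho$, equivalently $\varepsilon^{2}/n\le\rho\,\varepsilon$. Chaining these,
\[
\varepsilon^{2}-4R^{(-1)}(X_U)\;\le\;\varepsilon^{2}-\tfrac{2\varepsilon^{2}}{n}\;=\;\tfrac{n-2}{n}\,\varepsilon^{2}\;\le\;(n-2)\rho\,\varepsilon,
\]
which is the required quadratic inequality; solving for $\varepsilon\ge 0$ then delivers the stated bound.

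There is essentially no hard step: the argument is a short chain of two standard inequalities followed by the quadratic formula. The only thing worth verifying is sharpness of the chain---both bounds become tight simultaneously precisely when $|\lambda_1|=\cdots=|\lambda_n|=\rho$; in that regime $\sum_{i}\lambda_i^{2}=n\rho^{2}$, which gives $(n-2)^{2}\rho^{2}+16R^{(-1)}(X_U)=(n-2)^{2}\rho^{2}+8n\rho^{2}=(n+2)^{2}\rho^{2}$, so the right-hand side of the theorem collapses to $n\rho=\varepsilon$, confirming the bound is attained and that no step in the chain is loose.
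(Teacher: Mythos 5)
Your proof is correct, but it reaches the key quadratic inequality $\varepsilon^{2}\le(n-2)\rho\,\varepsilon+4R^{(-1)}(X_U)$ by a genuinely different route from the paper. The paper uses $\trace(R^{\omeg}(X))=0$ to split the spectrum by sign, writes $\varepsilon^{2}=2\bigl(\sum_{i\le k}|\lambda_i|\bigr)^{2}+2\bigl(\sum_{j>k}|\lambda_j|\bigr)^{2}$, expands, and bounds each within-class cross term via $\bigl(|\lambda_i|-\rho/2\bigr)\bigl(|\lambda_p|-\rho/2\bigr)\le\rho^{2}/4$, i.e.\ $|\lambda_i||\lambda_p|\le\tfrac{\rho}{2}(|\lambda_i|+|\lambda_p|)$; you instead combine the Cauchy--Schwartz bound $\varepsilon^{2}\le 2nR^{(-1)}(X_U)$ with the trivial bound $\varepsilon\le n\rho$, never invoking the trace-zero property. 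Both routes land on the same quadratic and hence the same bound, and your final step ($q(\varepsilon)\le 0$ with $q$ upward-opening and $\varepsilon\ge 0$ forces $\varepsilon\le t_{+}$) is sound; the only hypothesis you are silently using is $n\ge 2$ so that $n-2\ge 0$ in the step $\tfrac{n-2}{n}\varepsilon^{2}\le(n-2)\rho\,\varepsilon$, which is covered by the paper's standing assumption. Your derivation is shorter and, as a bonus, makes transparent something the paper's argument conceals: since $2R^{(-1)}(X_U)=\sum_i\lambda_i^{2}\le n\rho^{2}$ gives $\sqrt{2nR^{(-1)}(X_U)}\le n\rho$, your chain shows the stated bound is a formal consequence of the upper bound $\varepsilon\le\sqrt{2nR^{(-1)}(X_U)}$ of the first energy theorem and is therefore never sharper than it. What the paper's heavier decomposition buys is reusability: its identity for $\varepsilon^{2}$ (Equation (\ref{eq 1.5.1})) is applied again, with the pairwise inequality reversed using $\sigma=\mi_i|\lambda_i|$, to prove the companion lower bound in the next theorem --- a reversal your Cauchy--Schwartz-based chain does not admit.
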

\begin{proof}
Suppose $\lambda_k$ is the largest negative eigenvalue of $R^\omeg(X)$. Then $\lambda_1\leq\cdots\leq\lambda_k$ and $\lambda_{k+1}\leq\ldots\leq\lambda_n$. %Therefore $E(R^\omeg(X))=\sum\limits_{i=1}^{n}|\lambda_i(R^\omeg)|=2\sum\limits_{i=1}^{k} |\lambda_i(R^\omeg)|=2\sum\limits_{i=k+1}^{n} |\lambda_i(R^\omeg)|$. 
Now
\begin{align*}
\varepsilon(R^\omeg(X))^2=&\left(\sum_{i=1}^{k}|\lambda_i|+\sum_{j=k+1}^{n}|\lambda_j|\right)^2\\
=&2\left(\left(\sum_{i=1}^{k}|\lambda_i|\right)^2+\left(\sum_{j=k+1}^{n}|\lambda_j|\right)^2 \right),\;\;\text{as} \; \sum_{i=1}^{k}|\lambda_i|=\sum_{j=k+1}^{n}|\lambda_j| \\
=&2\left(\sum_{i=1}^{k}|\lambda_i|^2+\sum_{j=k+1}^{n}|\lambda_j|^2+2\sum_{1\leq i<p\leq k}|\lambda_i||\lambda_p|+2\sum_{(k+1)\leq j<q\leq n}|\lambda_j||\lambda_q|\right)\\
=&2\sum_{i=1}^{n}|\lambda_i|^2+4\left[\sum_{1\leq i<p\leq k}|\lambda_i||\lambda_p|+\sum_{(k+1)\leq j<q\leq n}|\lambda_j||\lambda_q|\right].\numberthis\label{eq 1.5.1}
\end{align*}

We have $\left(|\lambda_i|-\rho/2 \right)\left(|\lambda_p|-\rho/2 \right)\leq \frac{\rho^2}{4}$, which implies that $|\lambda_i||\lambda_p|\leq \frac{\rho}{2}(|\lambda_i|+|\lambda_p|)$. Similarly, $|\lambda_j||\lambda_q|\leq \frac{\rho}{2}(|\lambda_j|+|\lambda_q|)$.

Hence from Equation (\ref{eq 1.5.1}), we have
\begin{align*}
\varepsilon(R^\omeg(X))^2\leq&4R^{(-1)}(X_U)+4\cdot\frac{\rho}{2}\left(\sum_{1\leq i<p\leq k}(|\lambda_i|+|\lambda_p|)+\sum_{(k+1)\leq j<q\leq n}(|\lambda_j|+|\lambda_q|)\right)\\
=&4R^{(-1)}(X_U)+2\rho\left((k-1)\sum_{i=1}^{k}|\lambda_i|+(n-k-1)\sum_{j=k+1}^{n}|\lambda_j|\right)\\
=&4R^{(-1)}(X_U)+2\rho\left((k-1)\frac{\varepsilon(R^\omeg(X))}{2}+(n-k-1)\frac{\varepsilon(R^\omeg(X))}{2}\right)\\
=&4R^{(-1)}(X_U)+\rho(n-2)\varepsilon(R^\omeg(X)).
\end{align*}
After solving the preceding inequality, we get
\[\varepsilon(R^\omeg(X))\leq \frac{1}{2}\left(\rho(n-2)+\sqrt{\rho^2(n-2)^2+16R^{(-1)}(X_U)}\right). \qedhere \]
\end{proof}
%%%%%%%%%%%%%%%%%%%%%%%%%%%%%%%%%%%%%%%%%%%%%%%%%%%%%%%%%%%%%%%%%%%%%%%%%%%%%%%%
\begin{theorem}
Let the eigenvalues of $R^\omeg(X)$ be $\lambda_1,\ldots,\lambda_n$ and $\sigma=\underset{i}{\mi}{|\lambda_i|}$. Then
$$\varepsilon(R^\omeg(X))\geq \frac{1}{2}\left(\sigma(n-2)+\sqrt{\sigma^2(n-2)^2+16R^{(-1)}(X_U)}\right).$$
\end{theorem}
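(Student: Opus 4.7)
The plan is to mirror the proof of the preceding (upper bound) theorem step for step, only reversing the key two-term inequality so that $\sigma$ plays the role that $\rho$ played there. The fact that $\trace(R^\omeg(X))=0$, $\sum_{i=1}^n |\lambda_i|^2 = 2R^{(-1)}(X_U)$, and the ordering argument from that proof are reused verbatim.

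First, I would order the eigenvalues as $\lambda_1\leq\cdots\leq\lambda_k<0\leq\lambda_{k+1}\leq\cdots\leq\lambda_n$. Because $\sum_{i=1}^n\lambda_i=0$, we get $\sum_{i=1}^{k}|\lambda_i|=\sum_{j=k+1}^{n}|\lambda_j|=\tfrac{1}{2}\varepsilon(R^\omeg(X))$ (the degenerate case when all eigenvalues vanish — forcing $\sigma=0$ and $R^{(-1)}(X_U)=0$ — makes the claimed bound trivial). Squaring $\varepsilon(R^\omeg(X))$, grouping by sign, and expanding exactly as in Equation (\ref{eq 1.5.1}), I would obtain
$$\varepsilon(R^\omeg(X))^{2} = 2\sum_{i=1}^{n}|\lambda_i|^2 + 4\left[\sum_{1\leq i<p\leq k}|\lambda_i||\lambda_p|+\sum_{(k+1)\leq j<q\leq n}|\lambda_j||\lambda_q|\right].$$

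Next I would install the reverse of the two-term inequality used previously. Since $|\lambda_i|\geq\sigma$ for every $i$, both $|\lambda_i|-\sigma/2$ and $|\lambda_p|-\sigma/2$ lie in $[\sigma/2,\infty)$, so
$$\left(|\lambda_i|-\tfrac{\sigma}{2}\right)\left(|\lambda_p|-\tfrac{\sigma}{2}\right)\geq \frac{\sigma^2}{4},$$
which expands to $|\lambda_i||\lambda_p|\geq \frac{\sigma}{2}(|\lambda_i|+|\lambda_p|)$, and analogously for $|\lambda_j||\lambda_q|$. Substituting and using the counting identities $\sum_{1\leq i<p\leq k}(|\lambda_i|+|\lambda_p|)=(k-1)\sum_{i=1}^{k}|\lambda_i|$ and $\sum_{(k+1)\leq j<q\leq n}(|\lambda_j|+|\lambda_q|)=(n-k-1)\sum_{j=k+1}^{n}|\lambda_j|$, together with $\sum_{i=1}^{n}|\lambda_i|^2 = 2R^{(-1)}(X_U)$ and $\sum_{i=1}^{k}|\lambda_i|=\sum_{j=k+1}^{n}|\lambda_j|=\varepsilon(R^\omeg(X))/2$, the inequality collapses to
$$\varepsilon(R^\omeg(X))^{2} \geq 4R^{(-1)}(X_U) + \sigma(n-2)\,\varepsilon(R^\omeg(X)).$$

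Finally, I would solve this quadratic inequality in $\varepsilon(R^\omeg(X))$, keeping the positive root, to recover
$$\varepsilon(R^\omeg(X))\geq \frac{1}{2}\left(\sigma(n-2)+\sqrt{\sigma^{2}(n-2)^{2}+16R^{(-1)}(X_U)}\right).$$
The only genuinely new content compared with the upper-bound proof is the reversed two-term inequality; its justification rests solely on the non-negativity of $|\lambda_i|-\sigma/2$, which is immediate from the definition of $\sigma$. Everything else is a mechanical sign flip, so I do not expect any real obstacle beyond checking that the degenerate case $k\in\{0,n\}$ reduces to a trivially valid $0\geq 0$.
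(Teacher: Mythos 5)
Your proposal is correct and follows essentially the same route as the paper: the paper likewise starts from Equation (\ref{eq 1.5.1}), reverses the two-term inequality to $\left(|\lambda_i|-\sigma/2\right)\left(|\lambda_p|-\sigma/2\right)\geq \sigma^2/4$, obtains the quadratic inequality $\varepsilon(R^\omeg(X))^2\geq 4R^{(-1)}(X_U)+\sigma(n-2)\varepsilon(R^\omeg(X))$, and solves it. Your added remarks on the degenerate cases are a small bonus the paper omits, but there is no substantive difference in approach.
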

\begin{proof}
Considering $\sigma=\underset{i}{\mi}{|\lambda_i|}$, we have $\left(|\lambda_i|-\sigma/2 \right)\left(|\lambda_p|-\sigma/2 \right)\geq \frac{\sigma^2}{4}$. It implies that $|\lambda_i||\lambda_p|\geq \frac{\sigma}{2}(|\lambda_i|+|\lambda_p|)$. Similarly, $|\lambda_j||\lambda_q|\geq \frac{\sigma}{2}(|\lambda_j|+|\lambda_q|)$. Now from Equation (\ref{eq 1.5.1}), we get the quadratic inequality $\varepsilon(R^\omeg(X))^2\geq 4R^{(-1)}(X_U)+\sigma(n-2)\varepsilon(R^\omeg(X))$. Solving this quadratic inequality, we get the required result.
\end{proof}
%%%%%%%%%%%%%%%%%%%%%%%%%%%%%%%%%%%%%%%%%%%%%%%%%%%%%%%%%%%%%%%%%%%
We now provide some basic inequalities that help us in determining two other lower bounds for the energy of $R^\omeg(X)$.
\begin{lemma}[P\'olya-Szeg\"o Inequality \cite{fa}]\label{l 1.5.1}
If $a_i$ and $b_i$ are positive real numbers for each $i\in \{1,\ldots,n\}$, then
$$\sum_{i=1}^{n}a_i^2\sum_{i=1}^{n}b_i^2\leq \frac{1}{4}\left(\sqrt{\frac{M_1M_2}{m_1m_2}}+\sqrt{\frac{m_1m_2}{M_1M_2}}\right)^2\left(\sum_{i=1}^{n}a_ib_i \right)^2,$$
where $M_1=\underset{1\leq i\leq n}{\ma}a_i$, $M_2=\underset{1\leq i\leq n}{\ma}b_i$, $m_1=\underset{1\leq i\leq n}{\mi}a_i$ and $m_2=\underset{1\leq i\leq n}{\mi}b_i$.
\end{lemma}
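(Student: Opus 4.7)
The plan is to reduce the inequality to a one-variable estimate on the ratios $a_i/b_i$, and then apply an AM--GM step to convert the resulting linear inequality into a multiplicative one. Set $\alpha \coloneqq m_1/M_2$ and $\beta \coloneqq M_1/m_2$. Since $m_1 \le a_i \le M_1$ and $m_2 \le b_i \le M_2$, every ratio $t_i \coloneqq a_i/b_i$ satisfies $\alpha \le t_i \le \beta$.

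The key algebraic identity is that for any $t \in [\alpha, \beta]$,
\[
(t-\alpha)(\beta - t) \ge 0 \quad \Longrightarrow \quad t^2 \le (\alpha+\beta)t - \alpha\beta.
\]
I would apply this with $t = a_i/b_i$ and multiply through by $b_i^2 > 0$, obtaining $a_i^2 \le (\alpha+\beta)\,a_i b_i - \alpha\beta\,b_i^2$. Summing over $i$ yields
\[
\sum_{i=1}^{n} a_i^2 + \alpha\beta\sum_{i=1}^{n} b_i^2 \le (\alpha+\beta)\sum_{i=1}^{n} a_i b_i.
\]

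Next, apply the AM--GM inequality to the left-hand side: $\sum a_i^2 + \alpha\beta\sum b_i^2 \ge 2\sqrt{\alpha\beta\,(\sum a_i^2)(\sum b_i^2)}$. Combining this with the previous display gives
\[
2\sqrt{\alpha\beta}\,\sqrt{\textstyle\sum a_i^2\sum b_i^2} \le (\alpha+\beta)\sum_{i=1}^{n} a_i b_i,
\]
so after squaring both sides and rearranging,
\[
\sum_{i=1}^{n} a_i^2 \sum_{i=1}^{n} b_i^2 \le \frac{(\alpha+\beta)^2}{4\alpha\beta}\left(\sum_{i=1}^{n} a_i b_i\right)^2.
\]
Finally, I would rewrite the constant: $\frac{(\alpha+\beta)^2}{4\alpha\beta} = \frac{1}{4}\bigl(\sqrt{\beta/\alpha} + \sqrt{\alpha/\beta}\bigr)^2$, and plug in $\beta/\alpha = M_1 M_2/(m_1 m_2)$ and $\alpha/\beta = m_1 m_2/(M_1 M_2)$ to recover exactly the claimed bound.

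There is no real obstacle here; the only care needed is keeping track of the substitution $\alpha = m_1/M_2$, $\beta = M_1/m_2$ (rather than $M_1/m_1$ and $M_2/m_2$, which would give a weaker constant), and checking that positivity of the $a_i,b_i$ is used only to secure $b_i^2 > 0$, $\alpha,\beta > 0$, and the applicability of AM--GM. Equality is traced through both the quadratic step and the AM--GM step; it occurs precisely when each $a_i/b_i$ equals $\alpha$ or $\beta$ and the two sums $\sum a_i^2$ and $\alpha\beta\sum b_i^2$ coincide.
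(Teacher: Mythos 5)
Your proof is correct and complete. Note that the paper itself offers no proof of this lemma at all --- it is imported as a known result with a citation to \cite{fa} --- so there is no in-paper argument to compare against; your derivation is the standard one (the two-sided bound $(t_i-\alpha)(\beta-t_i)\ge 0$ on the ratios $t_i=a_i/b_i$ with $\alpha=m_1/M_2$, $\beta=M_1/m_2$, followed by AM--GM), and every step checks out, including the identification $\beta/\alpha=M_1M_2/(m_1m_2)$ that recovers the exact constant $\frac{1}{4}\bigl(\sqrt{M_1M_2/(m_1m_2)}+\sqrt{m_1m_2/(M_1M_2)}\bigr)^2$. Your remark about the correct choice of $\alpha,\beta$ (as opposed to $M_1/m_1$ and $M_2/m_2$) is exactly the point where a careless reconstruction would go wrong, and you handled it properly.
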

\begin{lemma}[Ozeki's Inequality \cite{fa}]\label{l 1.5.2}
If $a_i$ and $b_i$ are non-negative real numbers for each $i\in \{1,\ldots,n\}$, then
$$\sum_{i=1}^{n}a_i^2\sum_{i=1}^{n}b_i^2-\left(\sum_{i=1}^{n}a_ib_i \right)^2\leq \frac{n^2}{4}\left(M_1M_2-m_1m_2 \right)^2,$$
where $M_1=\underset{1\leq i\leq n}{\ma}a_i$, $M_2=\underset{1\leq i\leq n}{\ma}b_i$, $m_1=\underset{1\leq i\leq n}{\mi}a_i$ and $m_2=\underset{1\leq i\leq n}{\mi}b_i$.
\end{lemma}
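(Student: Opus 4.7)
The plan is to reduce to an extreme-point optimization via convexity, then handle the finitely many vertex configurations through an exchange argument followed by a short algebraic check. First, by Lagrange's identity,
\[
\sum_{i=1}^n a_i^2 \sum_{i=1}^n b_i^2 - \Bigl(\sum_{i=1}^n a_i b_i\Bigr)^2 = \sum_{1 \leq i < j \leq n}(a_i b_j - a_j b_i)^2 =: S.
\]
Fixing all variables except $a_k$, the map $a_k \mapsto S$ is a quadratic with leading coefficient $\sum_{j \neq k} b_j^2 \geq 0$, hence convex; the same holds for each $b_k$. A convex function on a product of intervals attains its maximum at a vertex, so it suffices to prove the inequality when each $(a_i, b_i) \in \{m_1, M_1\} \times \{m_2, M_2\}$.

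At such a vertex, let $k, \ell, p, q$ count the indices of types $(M_1, M_2)$, $(M_1, m_2)$, $(m_1, M_2)$, $(m_1, m_2)$ respectively, so that $k + \ell + p + q = n$. Consider the exchange which replaces one index of type $(M_1, M_2)$ and one of type $(m_1, m_2)$ by one of type $(M_1, m_2)$ and one of type $(m_1, M_2)$. This keeps $\sum a_i^2$ and $\sum b_i^2$ unchanged but decreases $\sum a_i b_i$ by the positive quantity $(M_1 - m_1)(M_2 - m_2)$, and therefore strictly increases $S$. Hence at the maximizer $kq = 0$, and by the symmetry of the claim we may assume $k = 0$.

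With $k = 0$, direct expansion yields
\[
S = \ell p\,P^2 + \ell q\, c_1 + pq\, c_2,
\]
where $P = M_1 M_2 - m_1 m_2$, $c_1 = m_2^2(M_1 - m_1)^2$, and $c_2 = m_1^2(M_2 - m_2)^2$. It remains to maximize this expression subject to $\ell + p + q = n$ and show the max equals $\tfrac{n^2}{4} P^2$. The algebraic inputs are the trivial bounds $c_1, c_2 \leq P^2$ (each of $m_2(M_1 - m_1)$ and $m_1(M_2 - m_2)$ is at most $P$) together with the identity
\[
P^2 - c_1 - c_2 = (M_1 - m_1)(M_2 - m_2)\bigl[M_2(M_1 + m_1) + m_2(M_1 - m_1)\bigr] \geq 0.
\]
Optimizing $\ell$ against $p$ for fixed $q$ yields a value that, after using these two facts and also $P^4 - (c_1 - c_2)^2 \geq 0$, is bounded by $\tfrac{n^2}{4} P^2$; equality holds at $q = 0$, $\ell = p = n/2$. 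The main obstacle is this final algebraic step, which, while mechanical once the displayed identity is in hand, requires separating cases according to whether the unconstrained optimal $\ell$ lies inside $[0, n-q]$ or on the boundary, and then completing the square in the remaining variable.
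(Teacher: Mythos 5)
The paper offers no proof of this lemma to compare against---it is quoted from \cite{fa} as a known result---so your attempt has to be judged on its own. Your reduction is sound up to a point: Lagrange's identity, separate convexity in each coordinate forcing the maximum onto a vertex of the box, and the exchange argument showing that a maximizing vertex configuration has $kq=0$ are all correct. The gap is the clause ``by the symmetry of the claim we may assume $k=0$.'' The two surviving branches are not symmetric. In the branch $q=0$ the pairwise contributions are
\[
S \;=\; k\ell\,M_1^2(M_2-m_2)^2 \;+\; kp\,M_2^2(M_1-m_1)^2 \;+\; \ell p\,P^2,
\]
and the analogue of your key identity $P^2-c_1-c_2\ge 0$ fails there: for $m_1=m_2=0$ all three coefficients equal $P^2=M_1^2M_2^2$, so $S=(k\ell+kp+\ell p)P^2$, which at $k=\ell=p=n/3$ equals $\tfrac{n^2}{3}P^2>\tfrac{n^2}{4}P^2$. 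So the branch you waved away is exactly the one that cannot be closed.

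In fact, pushing your own machinery through that branch produces a counterexample to the statement as printed: for $n=3$, $a=(1,1,0)$, $b=(1,0,1)$ one has $\sum a_i^2\sum b_i^2-\bigl(\sum a_ib_i\bigr)^2=4-1=3$, while $\tfrac{n^2}{4}(M_1M_2-m_1m_2)^2=\tfrac{9}{4}$. The classical Ozeki inequality carries the constant $\tfrac{n^2}{3}$ (sharp, as this example shows); the constant $\tfrac{n^2}{4}$ is valid only in special situations such as $n=2$ or $b_i\equiv 1$, where the statement degenerates to a Nagy--Gr\"uss type bound. So no proof of the lemma as stated can exist, and the defect is not yours alone: the lemma should be corrected to the $\tfrac{n^2}{3}$ form, and the final energy bound in Section 5 that relies on it should be adjusted accordingly. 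Your $k=0$ analysis, including the identity for $P^2-c_1-c_2$ and the resulting bound $\tfrac{n^2}{4}P^2$ in that branch (most cleanly seen by noting the objective is linear in $(c_1,c_2)$ over the simplex $c_1,c_2\ge 0$, $c_1+c_2\le P^2$), is correct and would survive in a proof of the corrected statement.
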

\begin{theorem}
Let $\lambda_1,\ldots,\lambda_n$ be the eigenvalues of $R^\omeg(X)$, $\rho=\underset{i}{\ma}{|\lambda_i|}$ and $\sigma=\underset{i}{\mi}{|\lambda_i|}$. Then
$$\varepsilon(R^\omeg(X))\geq \frac{\sqrt{8n\rho\sigma R^{(-1)}(X_U)}}{\rho+\sigma}.$$
\end{theorem}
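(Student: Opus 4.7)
The plan is to apply the P\'olya--Szeg\"o inequality (Lemma \ref{l 1.5.1}) directly with the judicious choice $a_i = |\lambda_i|$ and $b_i = 1$ for $i \in \{1,\ldots,n\}$. With this choice, I have $M_1 = \rho$, $m_1 = \sigma$, and $M_2 = m_2 = 1$, so the inequality collapses to
\[
\Bigl(\sum_{i=1}^n |\lambda_i|^2\Bigr) \cdot n \;\leq\; \frac{1}{4}\Bigl(\sqrt{\rho/\sigma} + \sqrt{\sigma/\rho}\Bigr)^{2}\Bigl(\sum_{i=1}^n |\lambda_i|\Bigr)^{2}.
\]

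Next I would rewrite the right-hand prefactor as
\[
\frac{1}{4}\Bigl(\sqrt{\rho/\sigma} + \sqrt{\sigma/\rho}\Bigr)^{2} \;=\; \frac{(\rho+\sigma)^{2}}{4\rho\sigma},
\]
and invoke the trace identity already established in the energy section, namely $\sum_{i=1}^n \lambda_i^2 = 2R^{(-1)}(X_U)$, together with the definition $\varepsilon(R^\omeg(X)) = \sum_{i=1}^n |\lambda_i|$. Substituting these into the displayed bound yields
\[
2nR^{(-1)}(X_U) \;\leq\; \frac{(\rho+\sigma)^{2}}{4\rho\sigma}\,\varepsilon(R^\omeg(X))^{2},
\]
and rearranging (using that $\rho,\sigma > 0$, which is fine provided no eigenvalue vanishes; if $X$ has any edge then $R^{(-1)}(X_U)>0$ and the assertion is nontrivial only in this case) gives the desired bound after taking square roots.

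There is essentially no obstacle: the entire argument is one application of Lemma \ref{l 1.5.1} followed by algebraic rearrangement, which is why the theorem is offered as a companion to the lower bound of the previous theorem. The only point requiring a word of caution is the implicit hypothesis $\sigma > 0$ (so that $m_1 > 0$, as required by P\'olya--Szeg\"o); if some eigenvalue of $R^\omeg(X)$ equals zero then $\sigma = 0$ and the right-hand side of the claimed inequality is $0$, so the statement holds trivially. Thus the plan is: (i) set $a_i = |\lambda_i|$, $b_i = 1$ in Lemma \ref{l 1.5.1}; (ii) simplify $(\sqrt{\rho/\sigma}+\sqrt{\sigma/\rho})^2$ to $(\rho+\sigma)^2/(\rho\sigma)$; (iii) substitute $\sum|\lambda_i|^2 = 2R^{(-1)}(X_U)$ and $\sum|\lambda_i| = \varepsilon(R^\omeg(X))$; (iv) solve for $\varepsilon(R^\omeg(X))$.
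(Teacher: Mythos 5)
Your proposal is correct and follows exactly the paper's own argument: apply the P\'olya--Szeg\"o inequality with $a_i=|\lambda_i|$, $b_i=1$, use $\sum_i\lambda_i^2=2R^{(-1)}(X_U)$, and rearrange. Your added remark about the degenerate case $\sigma=0$ is a sensible (and correct) caveat the paper omits.
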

\begin{proof}
Considering $\rho=\underset{i}{\ma}{|\lambda_i|}$ and $\sigma=\underset{i}{\mi}{|\lambda_i|}$, by P\'olya-Szeg\"o Inequality we have
\begin{align*}
&\sum_{i=1}^{n}|\lambda_i|^2\sum_{i=1}^{n}1^2\leq \frac{1}{4}\left(\sqrt{\frac{\rho}{\sigma}}+\sqrt{\frac{\sigma}{\rho}}\right)^2\left(\sum_{i=1}^{n}|\lambda_i|\right)^2\\
\text{or,} \;\;\;\;\; &2n R^{(-1)}(X_U) \leq \frac{1}{4}\left(\frac{\rho+\sigma}{\sqrt{\sigma\rho}}\right)^2\left(\varepsilon(R^\omeg(X))\right)^2\\
\text{or,} \;\;\;\;\; &\varepsilon(R^\omeg(X))\geq \frac{\sqrt{8n\rho\sigma R^{(-1)}(X_U)}}{\rho+\sigma}. \qedhere
\end{align*}
\end{proof}
%%%%%%%%%%%%%%%%%%%%%%%%%%%%%%%%%%%%%%%%%%%%%%%%%%%%%%%%%%%%%%%%%%%%%%%%
\begin{theorem}
Let $\lambda_1,\ldots,\lambda_n$ be the eigenvalues of $R^\omeg(X)$, $\rho=\underset{i}{\ma}{|\lambda_i|}$ and $\sigma=\underset{i}{\mi}{|\lambda_i|}$. Then
$$\varepsilon(R^\omeg(X))\geq \frac{\sqrt{8nR^{(-1)}(X_U)-n^2(\rho-\sigma)^2}}{2}.$$
\end{theorem}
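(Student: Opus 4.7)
The plan is to apply Ozeki's Inequality (Lemma~\ref{l 1.5.2}) with the choice $a_i = |\lambda_i|$ and $b_i = 1$ for each $i \in \{1,\ldots,n\}$. With this choice we have $M_1 = \rho$, $m_1 = \sigma$, and $M_2 = m_2 = 1$, so the right-hand side of Ozeki's Inequality reduces cleanly to $\frac{n^2}{4}(\rho-\sigma)^2$.

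Next, I would compute the three relevant sums. First, $\sum_{i=1}^n a_i^2 = \sum_{i=1}^n |\lambda_i|^2 = 2R^{(-1)}(X_U)$, as already established in the proof of Theorem~4.1 via $\trace(R^\omeg(X)^2)$. Second, $\sum_{i=1}^n b_i^2 = n$. Third, $\sum_{i=1}^n a_i b_i = \sum_{i=1}^n |\lambda_i| = \varepsilon(R^\omeg(X))$. Substituting these into Ozeki's Inequality yields
\begin{align*}
2n R^{(-1)}(X_U) - \varepsilon(R^\omeg(X))^2 \leq \frac{n^2}{4}(\rho-\sigma)^2.
\end{align*}

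Rearranging gives $\varepsilon(R^\omeg(X))^2 \geq 2nR^{(-1)}(X_U) - \frac{n^2}{4}(\rho-\sigma)^2 = \frac{8nR^{(-1)}(X_U) - n^2(\rho-\sigma)^2}{4}$, from which the desired bound follows by taking square roots (noting that the right-hand side is non-negative whenever the bound is informative, so the square root is well-defined in that regime).

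There is no real obstacle here; the argument is essentially a direct substitution into Ozeki's Inequality, entirely parallel to the preceding theorem that used P\'olya--Szeg\"o. The only minor subtlety worth a sentence in the write-up is confirming that the quantity under the square root is non-negative so that the inequality is meaningful; this follows from the fact that $\sum_{i=1}^n |\lambda_i|^2 = 2R^{(-1)}(X_U)$ combined with the trivial bound $\sum |\lambda_i|^2 \geq n\sigma^2$ and similar elementary estimates, but in any case the stated inequality remains valid as written.
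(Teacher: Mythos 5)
Your proposal is correct and follows exactly the paper's own argument: apply Ozeki's Inequality with $a_i=|\lambda_i|$ and $b_i=1$, use $\sum_i|\lambda_i|^2=2R^{(-1)}(X_U)$, and rearrange. No differences worth noting.
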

\begin{proof}
Considering $\rho=\underset{i}{\ma}{|\lambda_i|}$ and $\sigma=\underset{i}{\mi}{|\lambda_i|}$, by Ozeki's Inequality we have
\begin{align*}
&\sum_{i=1}^{n}|\lambda_i|^2\sum_{i=1}^{n}1^2-\left(\sum_{i=1}^{n}|\lambda_i|\right)^2\leq \frac{n^2}{4}\left(\rho-\sigma \right)^2\\
\text{or,} \;\;\;\;\;\; &\frac{\sqrt{8nR^{(-1)}(X_U)-n^2(\rho-\sigma)^2}}{2} \leq \varepsilon(R^\omeg(X)). \qedhere
\end{align*}
\end{proof}
%%%%%%%%%%%%%%%%%%%%%%%%%%%%%%%%%%%%%%%%%%%%%%%%%%%
\section*{Funding}
The first two authors thank the Science and Engineering Research Board (SERB), Government of India for supporting major part of this work under the Teachers Associateship for Research Excellence (TARE) project [file number TAR/2021/000045].
%%%%%%%%%%%%%%%%%%%%%%%%%%%%%%%%%%%%%%%%%%%%%%%%%%%%%%%%%%%%%%%%%%%%%%%%%%%%%%%%%%%%%%%%%%%%%%%%%%%%%%%%%%%%%%%
%%%%%%%%%%%%%%%%%%%%%%%%%%%%%%%%%%%%%%%%%%%%%%%%%%%%%%%%%%%%%%%%%%%%%%%%%%%%%%%%%%%%%%%%%%%%%%%%%%%%%%%%%%%%%%%

\end{document}